\title{On Structural Aspects of Friends-And-Strangers Graphs}
\author{Ryan Jeong}
\newcommand{\FS}{\mathsf{FS}}
\newcommand{\Path}{\text{Path}}
\newcommand{\Cycle}{\text{Cycle}}
\newcommand{\Grid}{\text{Grid}}
\newcommand{\Star}{\text{Star}}
\theoremstyle{definition}
\newtheorem{counter}{ctr}[section]
\newtheorem{theorem}[counter]{Theorem}
\newtheorem{definition}[counter]{Definition}
\newtheorem{conjecture}[counter]{Conjecture}
\newtheorem{corollary}[counter]{Corollary}
\newtheorem{proposition}[counter]{Proposition}
\newtheorem{lemma}[counter]{Lemma}
\newtheorem{remark}[counter]{Remark}
\newtheorem{problem}[counter]{Problem}
\newtheorem{example}[counter]{Example}
\begin{document}

\maketitle

\begin{abstract}
    Given two graphs $X$ and $Y$ with the same number of vertices, the friends-and-strangers graph $\FS(X, Y)$ has as its vertices all $n!$ bijections from $V(X)$ to $V(Y)$, with bijections $\sigma, \tau$ adjacent if and only if they differ on two elements of $V(X)$, whose mappings are adjacent in $Y$. In this article, we study necessary and sufficient conditions for $\FS(X, Y)$ to be connected for all graphs $X$ from some set. In the setting that we take $X$ to be drawn from the set of all biconnected graphs, we prove that $\FS(X, Y)$ is connected for all biconnected $X$ if and only if $\overline{Y}$ is a forest with trees of jointly coprime size; this resolves a conjecture of Defant and Kravitz. We also initiate and make significant progress toward determining the girth of $\FS(X, \Star_n)$ for connected graphs $X$, and in particular focus on the necessary trajectories that the central vertex of $\Star_n$ takes around all such graphs $X$ to achieve the girth.
\end{abstract}

\section{Introduction}

Defant and Kravitz (\cite{defant2020friends}) recently introduced friends-and-strangers graphs, which are defined as follows.

\begin{definition}[\cite{defant2020friends}] \label{fs_def}
Let $X$ and $Y$ be two simple graphs, each with $n$ vertices. The \textbf{friends-and-strangers} graph of $X$ and $Y$, denoted $\FS(X, Y)$, is a graph with vertices consisting of all bijections from $V(X)$ to $V(Y)$, with any two such bijections $\sigma, \sigma'$ adjacent in $\FS(X, Y)$ if and only if there exists an edge $\{a, b\}$ in $X$ such that the following hold.
\begin{itemize}
    \item $\{\sigma(a), \sigma(b)\} \in E(Y)$
    \item $\sigma(a) = \sigma'(b), \ \sigma(b) = \sigma'(a)$
    \item $\sigma(c) = \sigma'(c)$ for all $c \in V(X) \setminus \{a, b\}$.
\end{itemize}
In other words, $\sigma$ and $\sigma'$ differ precisely on two adjacent vertices of $X$, and the corresponding mappings are adjacent in $Y$. For any such $\sigma, \sigma'$, we say that $\sigma'$ is achieved from $\sigma$ by an \textbf{$(X, Y)$-friendly swap.}
\end{definition}

\begin{example}
See Figure \ref{fig:defn} for an illustration of this definition.
\end{example}

\begin{figure}[ht]

\begin{minipage}{.5\linewidth}
\centering
\subfloat[The graph $X$.]{\label{fig:X_gph}\includegraphics[width=0.42\textwidth]{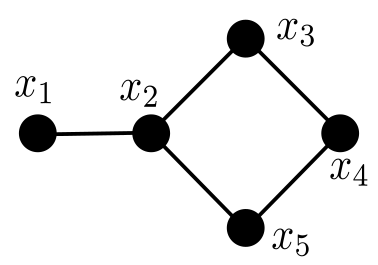}}
\end{minipage}%
\hfill
\begin{minipage}{.5\linewidth}
\centering
\subfloat[The graph $Y$.]{\label{fig:Y_gph}\includegraphics[width=0.42\textwidth]{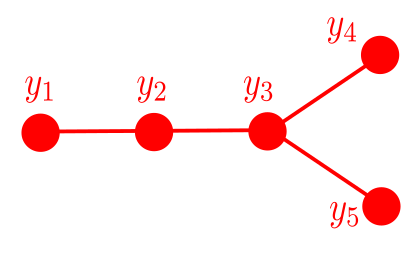}}
\end{minipage}\par\medskip
\centering
\subfloat[A sequence of $(X, Y)$-friendly swaps. The transpositions between adjacent configurations denote the two vertices in the graph $X$ involved in the $(X, Y)$-friendly swap. Red text corresponds to vertices in $Y$ placed upon vertices of $X$, which are labeled in black; this will be a convention throughout the rest of the work. The leftmost configuration corresponds to the bijection $\sigma \in V(\FS(X, Y))$ such that $\sigma(x_1) = y_1$, $\sigma(x_2) = y_5$, $\sigma(x_3) = y_3)$, $\sigma(x_4) = y_4$, and $\sigma(x_5) = y_2$; the other configurations analogously correspond to vertices in $\FS(X, Y)$.]{\label{fig:friendly_swaps}\includegraphics[width=.99\textwidth]{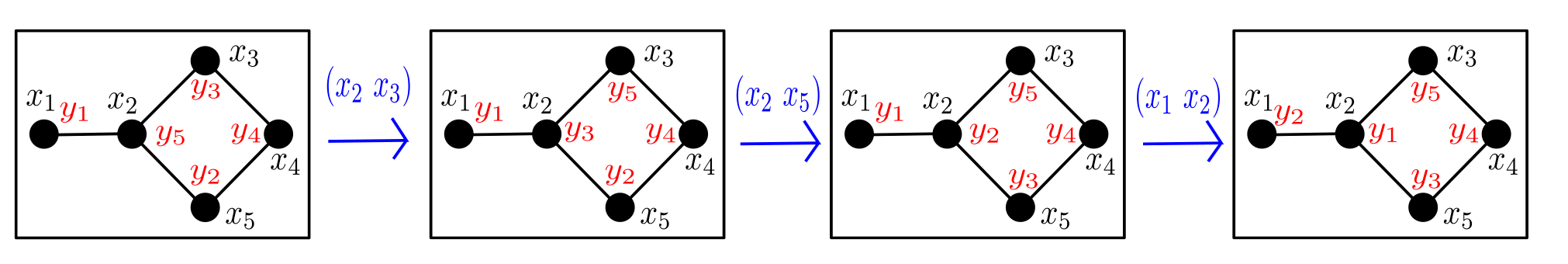}}

\caption{A sequence of $(X, Y)$-friendly swaps in $\FS(X, Y)$ for the graphs $X$ and $Y$, each on $5$ vertices. Any configuration in the bottom row corresponds to a vertex of $\FS(X, Y)$. Two consecutive configurations here differ by an $(X, Y)$-friendly swap, so the corresponding vertices in $\FS(X, Y)$ are adjacent.}
\label{fig:defn}
\end{figure}

\noindent As noted in \cite{defant2020friends}, it is frequently convenient to enumerate the vertices of the graphs $X$ and $Y$ so $V(X) = V(Y) = [n]$. Here, we can rephrase the definition as $V(\FS(X, Y)) = \mathfrak S_n$, and two permutations $\sigma, \sigma' \in V(\FS(X,Y))$ are adjacent if and only if
\begin{itemize}
    \item $\sigma' = \sigma \circ (i \ j)$ for some transposition $(i \ j)$
    \item $\{i, j\} \in E(X)$ 
    \item $\{\sigma(i), \sigma(j)\} \in E(Y)$.
\end{itemize}

\noindent The friends-and-strangers graph $\FS(X, Y)$ acquires its name from the following intuitive understanding of Definition \ref{fs_def}. Say that $V(X)$ corresponds to $n$ positions and $V(Y)$ corresponds to $n$ people, any two of whom are friends (if adjacent) or strangers (if nonadjacent). We place the $n$ people on the $n$ positions, with this configuration defining the bijection in $\FS(X, Y)$. From here, we can swap any two individuals if and only if their positions are adjacent in $X$ and the people placed on them are friends (i.e. adjacent in $Y$); this yields the bijection $\sigma' \in \FS(X, Y)$, for which we have $\{\sigma, \sigma'\} \in E(\FS(X, Y))$. 

A more concrete example of an object that friends-and-strangers graphs generalize is the famous $15$-puzzle, where the numbers $1$ through $15$ are placed on a $4$-by-$4$ board, with one empty space upon which adjacent tiles can slide. Indeed, if we let $X$ be the 4-by-4 grid graph $\Grid_{4 \times 4}$ and $Y = \Star_n$, then studying the graph $\FS(\Grid_{4 \times 4}, \Star_n)$ is equivalent to studying the set of possible configurations and moves that can be performed on the $15$-puzzle.
\subsection{Prior Work}

The article \cite{defant2020friends} introduces friends-and-strangers graphs and derives many of their basic properties. This work also studies the connected components of the graphs $\FS(\Path_n, Y)$ and $\FS(\Cycle_n, Y)$, as well as necessary and sufficient conditions for $\FS(X, Y)$ to be connected. We also remark that although friends-and-strangers graphs were introduced recently, many existing results in the literature can be recast into this framework. In particular, \cite{wilson1974graph} studies the connected components of $\FS(X, \Star_n)$ when $X$ is a biconnected graph. 

In a second paper by the same authors (and with Alon), \cite{alon2020typical} asks a number of probabilistic and extremal questions concerning friends-and-strangers graphs. The recent work \cite{bangachev2021asymmetric} provides asymmetric generalizations of two problems posed by \cite{alon2020typical}. Specifically, they study conditions on the minimal degrees of $X$ and $Y$ to guarantee that $\FS(X, Y)$ is connected, and a variant of this problem for $\FS(X, Y)$ to have two connected components when $X$ and $Y$ are taken to be edge-subgraphs of $K_{r, r}$, the complete bipartite graph with both partition classes having size $r$. 

Finally, in another paper \cite{jeong2021diameters}, we study the diameters of friends-and-strangers graphs, and in particular show that they fail to be polynomially bounded in the size of $X$ and $Y$. We also study the diameters of connected components of $\FS(\Path_n, Y)$ and $\FS(\Cycle_n, Y)$.

\subsection{Main Results}

In this paper, we explore two questions concerning structural properties of friends-and-strangers graphs.

\subsubsection{Connectivity of $\FS(X, Y)$ For Biconnected $X$}

Many results from \cite{alon2020typical,defant2020friends} strive to understand when $\FS(X, Y)$ is connected or disconnected, which corresponds in this setting to the ability to achieve any configuration from any other by an appropriate sequence of $(X, Y)$-friendly swaps. In one direction, we can ask for the set of graphs $Y$ such that $\FS(X, Y)$ is connected for all graphs $X$ from a specified set $\mathcal S$. Certainly all $X \in \mathcal S$ must be connected by Proposition \ref{basic_props} below; if we take $\mathcal S$ as the set of all connected graphs, then $Y = K_n$ is the only possibility. We now ask this question when $\mathcal S$ is the set of all biconnected graphs. The article \cite{defant2020friends} proves that $\FS(\Cycle_n, Y)$ is connected if and only if $Y$ is such that $\overline{Y}$ is a forest with trees of jointly coprime size, and conjectures that this is true for all biconnected graphs $X$. We devote the first part of this article towards resolve this conjecture, yielding the following result.

\begin{theorem}
Let $Y$ be a graph on $n \geq 3$ vertices such that $\overline{Y}$ is a forest consisting of trees $\mathcal T_1, \dots, \mathcal T_4$ with $\gcd(|V(\mathcal T_1)|, \dots, |V(\mathcal T_r)|) = 1$. If $X$ is a biconnected graph on $n$ vertices, then $\FS(X, Y)$ is connected.
\end{theorem}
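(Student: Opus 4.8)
The plan is to treat the two directions of the equivalence separately. The ``only if'' content is immediate from the results already quoted: cycles are biconnected, so if $\FS(X,Y)$ were connected for every biconnected $X$ then in particular $\FS(\Cycle_n,Y)$ would be connected, which by the theorem of Defant and Kravitz on cycles forces $\overline Y$ to be a forest with trees of jointly coprime size. Hence the substance is the ``if'' direction: assuming $\overline Y$ is such a forest, show $\FS(X,Y)$ is connected for every biconnected $X$. I would prove this by induction on the structure of $X$, using throughout the basic monotonicity $\FS(X',Y')\subseteq\FS(X,Y)$ for spanning subgraphs and the isomorphism $\FS(X,Y)\cong\FS(Y,X)$.

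Concretely, I would use the classical fact that a graph on $n\ge 3$ vertices is biconnected if and only if it has an open ear decomposition $X=C_0\cup P_1\cup\dots\cup P_k$, where $C_0$ is a cycle, each $P_i$ is a path whose two endpoints are distinct vertices of $C_0\cup P_1\cup\dots\cup P_{i-1}$ and whose interior vertices are new, and every prefix is again biconnected. The induction runs on the number of ears $k$. The base case $k=0$ is $X=\Cycle_n$, which is exactly the Defant--Kravitz theorem recalled in the introduction. For $k\ge 1$ write $X=X_0\cup P$ with $X_0=C_0\cup\dots\cup P_{k-1}$ biconnected and $P=(u=w_0,w_1,\dots,w_j,w_{j+1}=w')$ the last ear, where $u\ne w'$ lie in $V(X_0)$ and $w_1,\dots,w_j$ are the (possibly empty) set of new vertices. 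If $j=0$ the ear is a single edge, $X_0$ is a spanning biconnected subgraph of $X$, and connectedness of $\FS(X_0,Y)$ from the inductive hypothesis transfers to $\FS(X,Y)$.

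The genuine case is $j\ge 1$: now $X_0$ has $n-j<n$ vertices, so the inductive hypothesis applies to $\FS(X_0,Y_0)$ for any graph $Y_0$ on $n-j$ vertices whose complement is a forest with jointly coprime tree sizes. Here I would prove a reduction lemma of roughly the following shape. Since $X$ is biconnected, a $u$--$w'$ path inside $X_0$ closes the ear $P$ into a cycle $C\subseteq X$, and $(X,Y)$-friendly swaps along edges of $C$ let one manipulate the people occupying the ear interior $w_1,\dots,w_j$. One aims to show that, up to connectedness in $\FS(X,Y)$, one may assume the interior of the ear is occupied by any prescribed $j$-element subset $S\subseteq V(Y)$; once this is arranged, the residual reachability should be governed by an instance $\FS(X_0,Y_0)$, where $Y_0$ is obtained from $Y$ by deleting $S$ and adjusting the adjacencies around the pair $\{u,w'\}$ --- a deletion- and/or contraction-type operation on $\overline Y$.

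The main obstacle, and the point at which the coprimality hypothesis is genuinely consumed, is to arrange that $\overline{Y_0}$ is again a forest with trees of jointly coprime size, so that the inductive hypothesis applies to $\FS(X_0,Y_0)$: passing from $\overline Y$ to $\overline{Y_0}$ lowers the vertex count, and already in the cycle case one sees that careless choices destroy both the forest property and the coprimality. I expect this to require a delicate case analysis according to how the branch vertices $u,w'$ and the reserved set $S$ meet the trees $\mathcal T_1,\dots,\mathcal T_r$ of $\overline Y$, and quite possibly a strengthening of the inductive statement --- for instance tracking connectedness relative to a fixed anchored position, or describing the orbit structure of $\FS(\cdot,Y)$ when $\overline Y$ is a single tree. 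Along the way I would lean on Menger-type routing in the biconnected graph $X$ to slide prescribed people along internally disjoint paths, together with the symmetry and monotonicity noted above.
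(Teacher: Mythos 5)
Your induction on the number of ears is the right organizing principle (it is also the paper's), but the way you set up the inductive step contains the essential gap. When the last ear $P$ has $j\ge 1$ interior vertices, you propose to delete those interior vertices, pass to $X_0$ on $n-j$ vertices and to some $Y_0$ on $n-j$ vertices, and invoke the inductive hypothesis for $\FS(X_0,Y_0)$. Two things go wrong here. First, the claim that ``up to connectedness in $\FS(X,Y)$, one may assume the interior of the ear is occupied by any prescribed $j$-element subset $S\subseteq V(Y)$'' is itself a connectivity assertion of essentially the same strength as the theorem; it is not something you can assume and it is not supplied by Menger-type routing alone, since routing people along internally disjoint paths of $X$ is exactly the kind of move whose feasibility depends on the adjacency structure of $Y$. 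Second, even granting free choice of $S$, the inductive hypothesis needs $\overline{Y_0}=\overline{Y}|_{V(Y)\setminus S}$ to be a forest with trees of jointly coprime sizes, and this genuinely fails for many (sometimes all natural) choices of $S$: for instance if $\overline{Y}$ consists of trees of sizes $4$ and $9$, deleting a single vertex that splits the $9$-tree into two $4$-trees leaves all components of size $4$, and $\FS(\Cycle_{n-1},Y_0)$ is then disconnected by the cycle theorem. So the reduction does not close without both a proof that a ``good'' $S$ exists and can be installed on the ear, and a mechanism for moving between different $S$-classes; you flag this uncertainty yourself, and it is precisely where the argument is missing.

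The paper sidesteps this entirely by never shrinking the vertex set. It shows (via a re-choice of the ear decomposition) that an $r$-ear biconnected graph $X$ on $[n]$ can be viewed as an $(r-1)$-ear biconnected graph $X_0$ on the \emph{same} $n$ vertices with one edge $\{v,w\}$ of its initial cycle deleted and two new edges added. The graph $Y$ is never modified, so no coprimality bookkeeping arises. The inductive step then reduces to a single ``edge simulation'' lemma: if $\sigma$ and $\sigma'$ differ by a swap across the deleted edge $\{v,w\}$, they are joined by a path in $\FS(X,Y)$. That lemma (proved first for $r=1$ by a long case analysis exploiting a leaf of $\overline{Y}$ and the three cycle subgraphs of a one-ear graph, and needing only that $\overline{Y}$ is a forest with at least two trees, with coprimality consumed solely in the base case $X=\Cycle_n$) is where the real work lives. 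If you want to salvage your vertex-deleting reduction, you would at minimum need a strengthened inductive statement tracking connectivity relative to the contents of the ear, which is a substantially harder route than the edge-trading one.
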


\subsubsection{Girth}

In the latter part of this article, we explore the girth (the size of the smallest cycle subgraph) of a friends-and-strangers graph. In this context, this corresponds to the shortest sequence of $(X, Y)$-friendly swaps such that we begin and end in the same configuration, and two consecutive such swaps in this sequence do not involve the same vertices in $X$. This can easily be reduced to studying $\FS(X, \Star_n)$ for connected graphs $X$, for which we pose the following problem.

\begin{problem}
Find a precise description of the set of simple graphs $\mathcal G$ with finite girth such that for all $n \geq 3$, the following two statements hold.
\begin{itemize}
    \item Any $n$-vertex connected graph $X$ with $g(X) < \infty$ has that $g(\FS(X, \Star_n)) = g(\FS(\Tilde{X}, \Star_m))$ for some subgraph $\Tilde{X}$ of $X$ that is in $\mathcal G$ on $m \leq n$ vertices.
    \item If $X \in \mathcal G$, the only subgraph $\tilde{X} \in \mathcal G$ of $X$ satisfying $g(\FS(X, \Star_n)) = g(\FS(\Tilde{X}, \Star_m))$ is $\tilde{X} = X$ itself.
\end{itemize}
\end{problem}
\noindent Fundamentally, this problem (which we show is well-defined) asks for the necessary trajectories that the central vertex of $\Star_n$ must take around a graph $X$ to achieve the girth of $\FS(X, \Star_n)$. We make substantial progress in a complete characterization of the set of simple graphs $\mathcal G$, and in particular have the following results. In particular, the main body of the work provides specific trajectories that the central vertex of $\Star_n$ traverses around the graphs $\tilde{\mathcal G}$ described below to achieve a cycle in $\FS(X, \Star_n)$.

\begin{theorem} \label{girth_subset_intro}
Let $\tilde{\mathcal G}$ be the set of simple graphs that include the following.
\small
\begin{itemize}
    \item All cycle graphs.
    \item Barbell graphs $[\mathcal C_1, \mathcal C_2, \mathcal P]$ with $2(|V(\mathcal C_1)| + |V(\mathcal C_2)| + 2|E(\mathcal P)|) < \min\{|V(\mathcal C_1)|(|V(\mathcal C_1)|-1), |V(\mathcal C_2)|(|V(\mathcal C_2)|-1)\}$.
    \item $\theta$-graphs with $2(|V(\mathcal C_1)| + |V(\mathcal C_2)| + |V(\mathcal C)|) < \min\{|V(\mathcal C_1)|(|V(\mathcal C_1)|-1), |V(\mathcal C_2)|(|V(\mathcal C_2)|-1), |V(\mathcal C)|(|V(\mathcal C)|-1)\}$.
    \item $\tilde{\theta}$-graphs with $3(|V(\mathcal C_1)| + |V(\mathcal C_2)| + |V(\mathcal C)|) < \min\{|V(\mathcal C_1)|(|V(\mathcal C_1)|-1), |V(\mathcal C_2)|(|V(\mathcal C_2)|-1), |V(\mathcal C)|(|V(\mathcal C)|-1)\}$.
    \item $\tilde{\theta_4}$-graphs such that $4+4p_2+2(p_3+p_4) < \min\{p_2(p_2+1), 4(p_2+p_3+p_4), 6(1+p_2+p_3)\}$, where $p_1 = 1 < p_2 \leq p_3 \leq p_4$ denote the lengths of the paths between the two vertices of degree $4$.
\end{itemize}
\normalsize
\noindent We have that $\tilde{\mathcal G} \subset \mathcal G$.
\end{theorem}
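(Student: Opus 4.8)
The plan is to recast $g(\FS(X,\Star_n))$ as a combinatorial quantity attached to walks of the token on the central vertex of $\Star_n$, and then, for each family in $\tilde{\mathcal G}$, both to realize that quantity by an explicit walk and to rule out realizing it inside a proper subgraph. A friendly swap in $\FS(X,\Star_n)$ moves the token occupying the preimage of the central vertex across an edge of $X$, transposing it with the token it meets; hence the permutation effected by a sequence of swaps is the product, along the closed walk $W$ that this distinguished token traces in $X$, of the transpositions of the corresponding pairs of vertices of $X$, two configurations in the sequence coincide exactly when the transposition products of the two prefixes of $W$ agree, and the sequence uses no $X$-edge twice in a row exactly when $W$ has no immediate backtrack. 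Thus $g(\FS(X,\Star_n))$ is the least length $\ell\ge 3$ of a closed walk $W$ in $X$ without immediate backtrack whose full transposition product is the identity and whose length-$i$ prefixes ($0\le i<\ell$) have pairwise distinct transposition products --- a quantity depending only on $X$. Two facts I record: (i) embedding a witnessing walk in a subgraph $X'\subseteq X$ into $X$ with the remaining tokens fixed gives $g(\FS(X',\Star_{n'}))\ge g(\FS(X,\Star_n))$; (ii) a non-backtracking closed walk never visits a vertex of degree $\le1$, hence lies in the $2$-core $X_0$ of $X$, so $g(\FS(X,\Star_n))=g(\FS(X_0,\Star_{|V(X_0)|}))$. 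A short deduction from its two defining conditions (the well-definedness asserted for the Problem) identifies $\mathcal G$ with the connected graphs $X$ with $g(X)<\infty$ that are \emph{$\FS$-irreducible}: no proper subgraph $\tilde X$ with $g(\tilde X)<\infty$ has $g(\FS(\tilde X,\Star_m))=g(\FS(X,\Star_n))$; by (i) this is equivalent to every proper finite-girth subgraph having \emph{strictly larger} $\FS$-girth. For $X=\Cycle_n$, a non-backtracking closed walk winds monotonically, $k$ windings giving the $k$-th power of an $(n-1)$-cycle, so $g(\FS(\Cycle_n,\Star_n))=n(n-1)$ (one checks the intermediate states are distinct), and as the only proper subgraphs of $\Cycle_n$ are forests, $\Cycle_n$ is $\FS$-irreducible.

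For a barbell $[\mathcal C_1,\mathcal C_2,\mathcal P]$ the walk I would use runs once around $\mathcal C_1$, along $\mathcal P$, once around $\mathcal C_2$, back along $\mathcal P$, once around $\mathcal C_1$ with reversed orientation, along $\mathcal P$, once around $\mathcal C_2$ reversed, and back: one checks it is non-backtracking, that each pair of oppositely-oriented passes around a $\mathcal C_i$ cancels (the intervening excursions disturbing neither $\mathcal C_i$'s non-hub tokens nor, on balance, $\mathcal P$'s), and that no configuration recurs, giving $g(\FS(X,\Star_n))\le L(X):=2(|V(\mathcal C_1)|+|V(\mathcal C_2)|+2|E(\mathcal P)|)$. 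For the $\theta$-, $\tilde\theta$-, and $\tilde{\theta_4}$-families the walks are similar in spirit but far more delicate: the orientations and the order in which the constituent cycles and connecting paths are traversed must be chosen so that the transposition product collapses to the identity while never backtracking, and one must separately absorb the small boundary cases that the coefficients are calibrated for; summing edge-counts yields $L(X)$ equal to the expression in the corresponding bullet (e.g.\ $2(|V(\mathcal C_1)|+|V(\mathcal C_2)|+|V(\mathcal C)|)$ for a $\theta$-graph, the $2$ rising to $3$ for $\tilde\theta$, and $4+4p_2+2(p_3+p_4)$ for $\tilde{\theta_4}$). Producing these walks --- and, more, proving that the short list of candidate trajectories (the global one above, together with the single-cycle windings) is \emph{exhaustive}, so that $g(\FS(X,\Star_n))$ equals the minimum of their lengths for \emph{every} member of the family --- is what I expect to be the principal obstacle; this exhaustiveness also supplies the matching lower bounds used below.

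Finally, $\FS$-irreducibility. By (ii) it suffices to bound below by $L(X)$ the $\FS$-girth of every possible $2$-core of a proper subgraph (forests contributing $g=\infty$). The only min-degree-$\ge2$ subgraphs of a barbell or $\theta$-graph other than the graph itself are disjoint unions of the defining cycles $\mathcal C_i$ --- a single $\mathcal C_i$ in the $\theta$ case, since any two of its defining cycles share a vertex --- so such a $2$-core has $\FS$-girth $\ge\min_i|V(\mathcal C_i)|(|V(\mathcal C_i)|-1)$, which the hypothesis $L(X)<\min_i|V(\mathcal C_i)|(|V(\mathcal C_i)|-1)$ makes exceed $L(X)$. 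For $\tilde{\theta_4}$ (degree-$4$ vertices joined by paths of lengths $1=p_1<p_2\le p_3\le p_4$) the possible $2$-cores of proper subgraphs are single cycles $P_i\cup P_j$, the genuine sub-$\theta$ on $\{P_2,P_3,P_4\}$, and the sub-$\tilde\theta$'s on path-triples containing $P_1$; by the $\FS$-girth formulas from the exhaustiveness step, each such $2$-core has girth equal to a minimum of candidate lengths, every one of which --- $p_2(p_2+1)$ or larger for a single cycle, $4(p_2+p_3+p_4)$ for the sub-$\theta$, $6(1+p_2+p_3)$ or larger for a sub-$\tilde\theta$ --- is strictly above $L(X)=4+4p_2+2(p_3+p_4)$ precisely by the three inequalities hypothesized. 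Hence every proper finite-girth subgraph $\tilde X$ of $X$ satisfies $g(\FS(\tilde X,\Star_m))>L(X)\ge g(\FS(X,\Star_n))$, so each member of $\tilde{\mathcal G}$ is $\FS$-irreducible, i.e.\ lies in $\mathcal G$. This proves $\tilde{\mathcal G}\subset\mathcal G$.
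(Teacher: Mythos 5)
Your framework---recasting $g(\FS(X,\Star_n))$ as the minimal length of a non-backtracking closed walk of the hole whose transposition product is the identity and whose prefix products are distinct, together with subgraph monotonicity and the identification of $\mathcal G$ with the ``irreducible'' graphs---is exactly the paper's setup (the $\mathcal C$-induced subgraph of Definition \ref{path_induced} and the characterization of $\mathcal G$ following Problem \ref{girth_problem}), and your barbell walk and the irreducibility bookkeeping match Proposition \ref{barbell_bd} and the discussion surrounding Theorem \ref{girth_subset}. But the proposal stops precisely where the real work lies, and you say so yourself: you exhibit no walks for the $\theta$-, $\tilde\theta$-, and $\tilde{\theta_4}$-families, and you do not prove that your ``short list of candidate trajectories'' is exhaustive, deferring both as ``the principal obstacle.'' These are not routine verifications. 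The $\tilde{\theta_4}$ upper bound requires the specific circular word $312412132142$ on the four paths (Proposition \ref{fourth_traj}); it is not a naive concatenation of cycle traversals. Far more seriously, the matching lower bounds are the bulk of the paper's argument: Proposition \ref{theta_bd} encodes an arbitrary competing trajectory as a circular word on $\{1,2,3\}$ with traversal counts $(n_1,n_2,n_3)$, rules out $n_i\in\{1,2,3,5\}$ by tracking which tokens are pushed off the inner vertices of each path, and in the $\tilde\theta$ case runs a delicate analysis of the subcases $n_2=4$, $n_3=4$, and $n_1=4$ with $n_2=n_3=6$, following the orbits of $\sigma_0(w)$ and $\sigma_0(w_2)$ around the long cycle. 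Asserting exhaustiveness of candidate trajectories is exactly the statement to be proved, not a premise.

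A second, smaller gap sits in your $\tilde{\theta_4}$ irreducibility step: you bound the $\FS$-girth of the sub-$\theta$ on $\{\mathcal P_2,\mathcal P_3,\mathcal P_4\}$ below by $4(p_2+p_3+p_4)$ ``by the $\FS$-girth formulas,'' but Proposition \ref{theta_bd} establishes that value as the girth only under that subgraph's own hypothesis, which need not hold. What is actually needed (and what the terse comparison in the paper's Proposition \ref{fourth_traj} relies on) is a lower bound on the length of any trajectory confined to three of the four paths or to a single cycle, which is again the deferred exhaustiveness analysis. So the architecture coincides with the paper's, but the constructions and the trajectory-exhaustion arguments that constitute the proof are missing.
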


\begin{figure}[ht]
    \centering
    \begin{minipage}{.24\linewidth}
    \centering
    \subfloat[Barbell graphs.]{\label{fig:barbell_intro}\includegraphics[width=\textwidth]{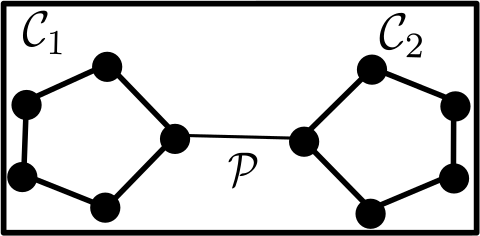}}
    \end{minipage}%
    \hfill
    \begin{minipage}{.24\linewidth}
    \centering
    \subfloat[$\theta$-graphs.]{\label{fig:theta_intro}\includegraphics[width=\textwidth]{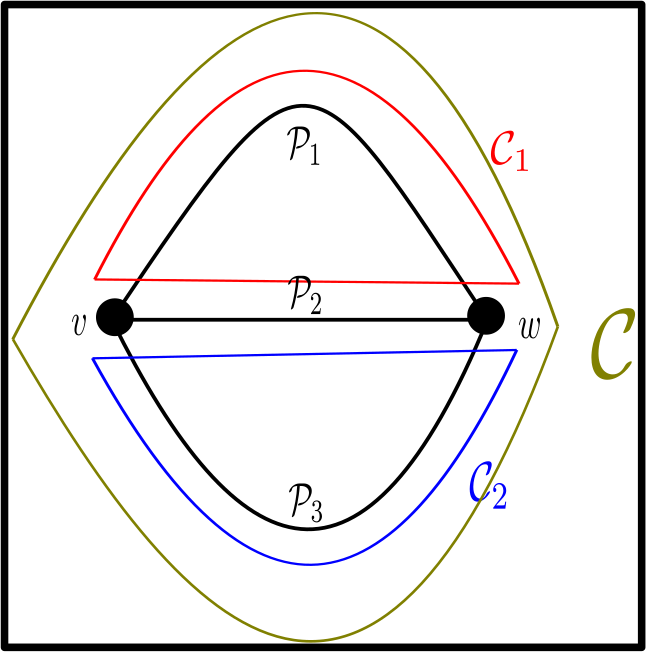}}
    \end{minipage}%
    \hfill
    \begin{minipage}{.24\linewidth}
    \centering
    \subfloat[$\tilde{\theta}$-graphs.]{\label{fig:tilde_theta_intro}\includegraphics[width=\textwidth]{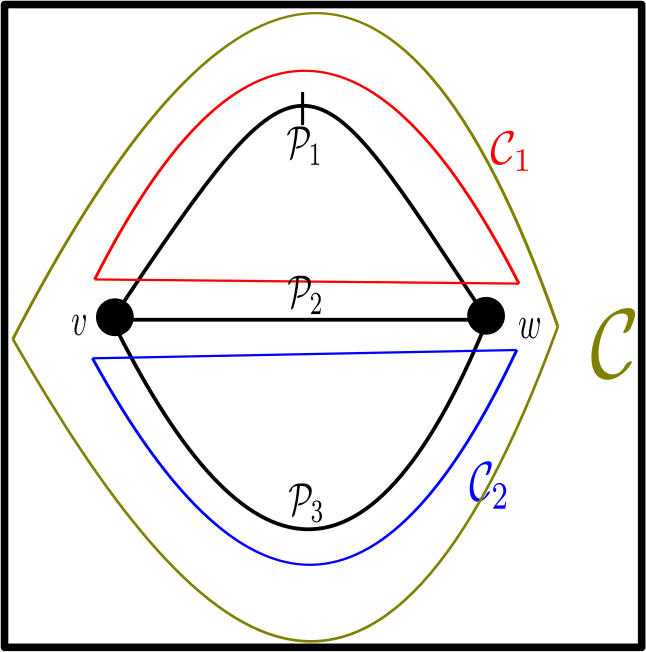}}
    \end{minipage}
    \hfill
    \begin{minipage}{.24\linewidth}
    \centering
    \subfloat[$\tilde{\theta_4}$-graphs.]{\label{fig:tilde_theta4_intro}\includegraphics[width=\textwidth]{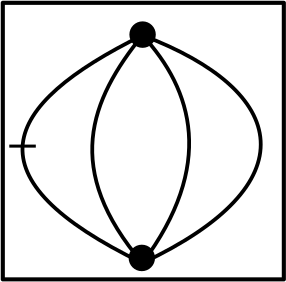}}
    \end{minipage}%
    \caption{Graphs in $\tilde{\mathcal G}$, as discussed in Theorem \ref{girth_subset_intro}. Hatch marks over paths indicate that they are edges.}
    \label{fig:girth_subset_intro}
\end{figure}

\noindent See Figure \ref{girth_subset_intro} for illustrations of the graphs in Theorem \ref{girth_subset_intro}. We conjecture that $\mathcal G = \tilde{\mathcal G}$, and will leave this unresolved in this work. Instead, we provide the following superset of $\mathcal G$, which shows that not much can be added to $\tilde{\mathcal G}$ to achieve $\mathcal G$.

\begin{theorem} \label{girth_superset_intro}
Let $\mathcal G'$ include the set $\tilde{\mathcal G}$ from Theorem \ref{girth_subset_intro}, as well as all instances of the following graphs. Here, all ``edges" in Figure \ref{fig:girth_superset_intro} correspond to paths. 
\small
\begin{itemize}
    \item $\theta_4$-graphs.
    \item $\theta_5$-graphs.
    \item $K_4^*$-graphs.
    \item $K_{3, 3}^*$-graphs.
\end{itemize}
\normalsize
\noindent We have that $\mathcal G \subset \mathcal G'$.
\end{theorem}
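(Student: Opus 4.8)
The plan is to prove the contrapositive in the form: any connected graph $X$ with $g(X)<\infty$ and $X\notin\mathcal G'$ is \emph{reducible}, meaning it has a proper subgraph $X'$ with $g(\FS(X',\Star_{m}))=g(\FS(X,\Star_n))$. This suffices, for then the first condition defining $\mathcal G$, applied to $X'$, yields a member of $\mathcal G$ strictly below $X$ with the same $\FS(\cdot,\Star)$-girth, which contradicts the second condition whenever $X\in\mathcal G$; hence $X\in\mathcal G$ forces $X\in\mathcal G'$. Throughout we use the standard description of $\FS(X,\Star_n)$: designate the centre of $\Star_n$ as a ``hole'', so that every $(X,\Star_n)$-friendly swap slides the hole along an edge of $X$, and a cycle of length $\ell$ in $\FS(X,\Star_n)$ is exactly a closed walk $W$ of length $\ell$ traversed by the hole that is non-backtracking, induces the identity permutation on the remaining $n-1$ tokens (this part governed by Wilson's analysis of the $15$-puzzle, \cite{wilson1974graph}), and visits $\ell$ pairwise distinct configurations. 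Call such a $W$ \emph{admissible}; then $g(\FS(X,\Star_n))$ is the minimum length of an admissible walk.

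The first reduction is to the case where $X$ equals the union $X_W$ of the edges used by some shortest admissible walk $W$: since a non-backtracking closed walk cannot live in a forest, $X_W$ contains a cycle, and any admissible walk of $X_W$ lifts verbatim to $X$, so $g(\FS(X_W,\Star_m))=g(\FS(X,\Star_n))$ and $X$ is reducible whenever $X_W\subsetneq X$. Hence we may take $X=X_W$, so $X$ has minimum degree $\geq 2$ (an admissible walk never enters a pendant path). If $X$ has no vertex of degree $\geq 3$ it is a cycle and lies in $\mathcal G'$. If $X$ has a cut vertex, a short-circuiting argument using the minimality of $W$ shows that any third nontrivial block, or nontrivial block that is not a cycle, would yield a proper subgraph carrying an admissible walk no longer than $W$; so $X$ is a barbell $[\mathcal C_1,\mathcal C_2,\mathcal P]$. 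Comparing the shortest admissible walk that genuinely uses the whole barbell, of length $2(|V(\mathcal C_1)|+|V(\mathcal C_2)|+2|E(\mathcal P)|)$, with looping a single $\mathcal C_i$ until its $(|V(\mathcal C_i)|-1)$-cycle is undone, of length $|V(\mathcal C_i)|(|V(\mathcal C_i)|-1)$: if the barbell inequality of Theorem \ref{girth_subset_intro} fails, a single-cycle subgraph realizes the girth and $X$ is reducible; if it holds, $X\in\tilde{\mathcal G}\subseteq\mathcal G'$.

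It remains to handle $2$-connected $X$ that is not a cycle. Suppressing degree-$2$ vertices yields a $2$-connected multigraph $H$ of minimum degree $\geq 3$, from which $X$ is recovered by assigning path lengths to the edges of $H$. When $H$ consists of two vertices joined by $k\geq 3$ parallel edges (so $X$ is a $\theta_k$-graph), a rerouting and counting argument shows that any admissible walk using four or more of the parallel paths can be replaced by one, no longer, using at most three; thus $X\in\mathcal G$ forces $k\leq 5$, yielding the $\theta$-, $\theta_4$- and $\theta_5$-graphs, and for $k=3$ the $\theta$-graph inequality of Theorem \ref{girth_subset_intro} is precisely what prevents reduction to a single cycle. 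When $H=K_4$ or $H=K_{3,3}$, $X$ is a $K_4^*$- or $K_{3,3}^*$-graph. For every other $H$ --- $K_5$, the prism, $\theta_k$ with $k\geq 6$, or any $H$ with at least three branch vertices outside the $\tilde\theta$- and $\tilde\theta_4$-shapes --- one locates a $\theta$-, $K_4$-, $K_{3,3}$-, or barbell-subgraph of $X$ into which an optimal admissible walk can be rerouted without increasing its length, so $X$ is reducible; the $\tilde\theta$- and $\tilde\theta_4$-graphs (the latter with two degree-$4$ vertices joined by four paths, one of length $1$) are exactly the small exceptional configurations for which no such rerouting exists, which is why their inequalities carry the factors $3$ and $4$ in place of $2$.

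The main obstacle is this last reducibility step for ``over-rich'' $2$-connected $X$: for each excluded type $H$ one must exhibit an explicit admissible walk confined to a proper subgraph and of length at most that of the optimum, which requires controlling simultaneously the Wilson-type permutation bookkeeping, the non-backtracking and distinct-configuration constraints, and the numerical thresholds separating $\tilde{\mathcal G}$ from the rest of $\mathcal G'$. The finitely many ``near-miss'' families $K_4^*$, $K_{3,3}^*$, $\theta_4$ and $\theta_5$ are exactly those for which reducibility is currently undecided; their inclusion is what makes $\mathcal G'$ a proper superset of $\tilde{\mathcal G}$ rather than equal to it, and accounts for the gap between Theorems \ref{girth_subset_intro} and \ref{girth_superset_intro}.
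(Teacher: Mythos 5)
Your high-level plan (pass to the subgraph traced by a girth-achieving cycle, then classify by branch structure) matches the spirit of the paper, but the proposal has a genuine gap and at least one false intermediate claim. The false claim: you assert that ``any admissible walk using four or more of the parallel paths can be replaced by one, no longer, using at most three.'' If that were true, every $\theta_k$-graph with $k \geq 4$ would be reducible and hence excluded from $\mathcal G$; but the paper's Proposition \ref{fourth_traj} exhibits, for suitable $\theta_4$-graphs, an explicit trajectory using all four paths (word $312412132142$, length $4+4p_2+2(p_3+p_4)$) that is strictly shorter than every trajectory confined to three paths, so such $\theta_4$-graphs genuinely lie in $\mathcal G$. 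Your stated conclusion ``thus $k \leq 5$'' also does not follow from your premise (the premise would give $k \leq 3$). The correct counting argument, which the paper uses, is that each used path must be traversed at least twice, so for $k \geq 6$ the three-shortest-paths trajectory of Proposition \ref{theta_bd} already beats any walk using all $k$ paths; and the elimination of the residual $\tilde{\theta_6}$ case requires comparing against the four-path $\theta_4$ trajectory, an argument absent from your proposal.

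The larger gap is that the central technical content is asserted rather than proved. The paper's route is: (i) Theorem \ref{barbell_proof}, a multi-page case analysis (on the length of the connecting path, on whether a second connecting path exists, on swap counts $y_{\mathcal C}$ and edge-traversal counts) showing that no graph in $\mathcal G$ has a \emph{proper} barbell subgraph --- note that such subgraphs occur inside $2$-connected graphs (two cycles joined by two disjoint paths), so your cut-vertex/$2$-connected dichotomy does not isolate this difficulty; and (ii) Proposition \ref{remaining_cases}, which uses barbell-freeness to show that every extra path attached to a proper $\theta$-subgraph either joins the two branch vertices (yielding $\theta_k$, $k\leq 5$, or $\tilde{\theta_6}$) or joins interior vertices of the two cycles in a constrained crossing pattern (yielding only $K_4^*$ and $K_{3,3}^*$). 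Your proposal replaces this with an enumeration of topological minors $H$ and the statement that for every other $H$ ``one locates a \dots subgraph of $X$ into which an optimal admissible walk can be rerouted without increasing its length'' --- which you yourself identify as ``the main obstacle.'' That rerouting claim is precisely the theorem; without it the argument is a program, not a proof.
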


\begin{figure}[ht]
    \centering
    \begin{minipage}{.195\linewidth}
    \centering
    \subfloat[$\theta_4$-graphs.]{\label{fig:theta4_intro}\includegraphics[width=\textwidth]{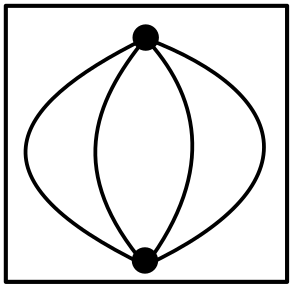}}
    \end{minipage}%
    \hfill
    \begin{minipage}{.195\linewidth}
    \centering
    \subfloat[$\theta_5$-graphs.]{\label{fig:theta5_intro}\includegraphics[width=\textwidth]{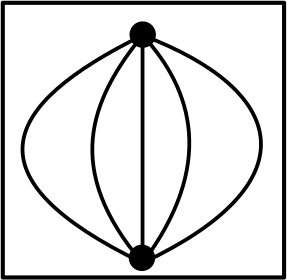}}
    \end{minipage}%
    \hfill
    \begin{minipage}{.25\linewidth}
    \centering
    \subfloat[$K_4^*$-graphs.]{\label{fig:K4*_intro}\includegraphics[width=\textwidth]{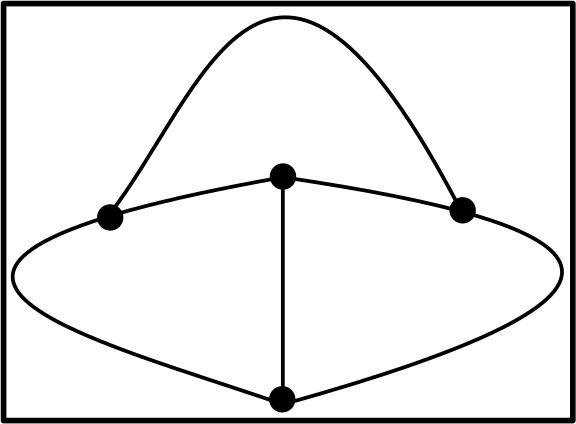}}
    \end{minipage}
    \hfill
    \begin{minipage}{.25\linewidth}
    \centering
    \subfloat[$K_{3,3}^*$-graphs.]{\label{fig:K33*_intro}\includegraphics[width=\textwidth]{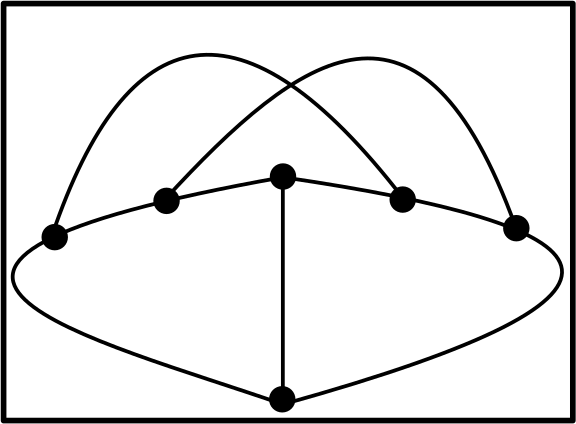}}
    \end{minipage}%
    \caption{Graphs in $\mathcal G'$, as discussed in Theorem \ref{girth_superset_intro}.}
    \label{fig:girth_superset_intro}
\end{figure}

\section{Preliminaries}

\subsection{Notation}
Here, we review some common families of graphs and elementary graph theory terminology that we shall refer to throughout this article. 

\begin{itemize}
    \item $[n] = \{1, 2, \dots, n\}$. 
    \item The vertex and edge sets of a graph $G$ will be denoted $V(G)$ and $E(G)$, respectively.
    \item Define the disjoint union of a collection of graphs $\{G_i\}_{i \in I}$, notated $\bigoplus_{i \in I} G_i$, to be the graph with vertex set $\bigsqcup_{i \in I} V(G_i)$ and edge set $\bigsqcup_{i \in I} E(G_i)$. This readily extends to expressing a graph as the disjoint union of its connected components. 
    \item The Cartesian product of graphs $G_1, \dots, G_r$, denoted $G_1 \square \dots \square G_r$, has vertex set $V(G_1) \times \dots \times V(G_r)$, with $(v_1, \dots, v_r)$ and $(w_1, \dots, w_r)$ adjacent if and only if there exists $i \in [r]$ such that $\{v_i, w_i\} \in E(G_i)$ and $v_j = w_j$ for all $j \in [r] \setminus \{i\}$.
\end{itemize}

\subsubsection{Common Families of Graphs} \label{graph_families}

Assume that the vertex set of all graphs is given by $[n]$. We define the graphs in terms of their edge sets.
\begin{itemize}
    \item The complete graph $K_n$ has edge set $\{\{i, j\} : i, j \in [n], i \neq j\}$.
    \item The path graph $\Path_n$ has edge set $E(\Path_n) = \{\{i, i+1\} : i \in [n-1]\}$.
    \item The cycle graph $\Cycle_n$ has edge set $E(\Cycle_n) = \{\{i, i+1\} : i \in [n-1]\} \cup \{\{n, 1\}\}$.
    \item The star graph $\Star_n$ has edge set $E(\Star_n) = \{\{i, n\} : i \in [n-1]\}$.
    \item For $i + j = n$, the complete bipartite graph $K_{i,j}$ has edge set $E(K_{i,j}) = \{\{v_1, v_2\}: v_1 \in [i], v_2 \in \{i+1, n\}\}$. This partitions $V(K_{i,j})$ into two sets so that every vertex in one set is adjacent to every vertex in the other; we shall refer to these sets as partition classes of $V(K_{i, j})$.
\end{itemize}

\subsubsection{Relevant Terminology}

We review the definitions for some elementary notions in graph theory.
\begin{itemize}
    \item The \textit{complement} of a graph $G$, denoted $\overline{G}$, is the graph with vertex set $V(G)$, such that for any $v, w \in V(G)$ with $v \neq w$, we have that $\{v, w\} \in E(\overline{G})$ if and only if $\{v, w\} \notin E(G)$. 
    \item An \textit{isomorphism} from $G$ to $H$ is a mapping $\varphi: V(G) \to V(H)$ such that $\{v, w\} \in E(G)$ if and only if $\{\varphi(v), \varphi(w)\} \in E(H)$. Here, we say that $G$ and $H$ are \textit{isomorphic}, and denote the fact that two graphs are isomorphic by $G \cong H$.
    \item A graph $H$ is a \textit{subgraph} of a graph $G$ if $V(H) \subseteq V(G)$ and $E(H) \subseteq E(G)$. The subgraph $H$ is said to be induced if $E(H) = \{\{v, w\} \in E(G) : v, w \in V(H)\}$. In particular, if the vertex set of $H$ is given by the set $\mathcal V = V(H)$, we shall denote the induced subgraph $H$ by $G|_{\mathcal V}$.
    \item A graph $G$ is \textit{connected} if and only if for any $v, w \in V(G)$, there exists a path in $G$ connecting $v$ to $w$. A \textit{connected component} of $G$ is a maximal connected subgraph of $G$; we say that the size of a connected component $H$ of $G$ is $|V(H)|$. In particular, if the connected components of $G$ are given by $H_1, \dots, H_r$, then we can write $G = \bigoplus_{i=1}^r H_i$ as the disjoint union of its connected components.
    \item A \textit{cut vertex} of a graph $G$ is a vertex $v \in V(G)$ such that removing the vertex and all incident edges causes the resulting graph (which is precisely given by $G |_{V(G) \setminus v}$) to be disconnected. A graph $G$ is \textit{biconnected} (sometimes called 2-connected) if it is connected and does not have a cut vertex. 
    \item The \textit{distance} $d_G(x,y)$ in $G$ of two vertices $v, w \in V(G)$ is the length of a shortest path between $x$ and $y$ in $G$. We shall drop the subscript $G$ and write $d(v, w)$ if the graph $G$ is obvious from context, and say $d(v,w) = \infty$ if no such path exists. The \textit{diameter} of $G$ is the greatest distance between any two vertices in $V(G)$; if $G$ is not connected, we shall often study diameters of connected components of $G$.
    \item The \textit{girth} of a graph $G$, denoted $g(G)$, is the size of the smallest cycle subgraph contained in $G$. We say that $g(G) = \infty$ if $G$ does not have any cycle subgraphs. A graph $G$ on $n$ vertices is \textit{Hamiltonian} if it contains $\Cycle_n$ as a subgraph.
\end{itemize}

\subsection{Basic Properties}

It is straightforward to observe that the isomorphism type of $\FS(X, Y)$ depends strictly on the isomorphism types of the graphs $X$ and $Y$. The following proposition gives other elementary properties of $\FS(X,Y)$.
\begin{proposition}[\cite{defant2020friends}] \label{basic_props}
Let $\FS(X, Y)$ be the friends-and-strangers graph of $X$ and $Y$. 
\begin{enumerate}
    \item $\FS(X,Y)$ is isomorphic to $\FS(Y, X)$.
    \item $\FS(X,Y)$ is bipartite.
    \item If $X$ or $Y$ is disconnected, then $\FS(X, Y)$ is also disconnected.
    \item Let $X, \Tilde{X}, Y, \Tilde{Y}$ be graphs on $n$ vertices. If $X$ is isomorphic to a subgraph of $\Tilde{X}$ and $Y$ is isomorphic to a subgraph of $\Tilde{Y}$, then $\FS(X,Y)$ is isomorphic to a subgraph of $\FS(\Tilde{X}, \Tilde{Y})$. 
    \item Let $X$ and $Y$ be connected graphs on $n \geq 3$ vertices, each with a cut vertex. Then $\FS(X, Y)$ is disconnected.
\end{enumerate}
\end{proposition}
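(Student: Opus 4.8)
Items 1--4 are routine, so I would dispatch them quickly. For item 1, the map $\sigma\mapsto\sigma^{-1}$ is an isomorphism $\FS(X,Y)\to\FS(Y,X)$: if $\sigma'=\sigma\circ(i\ j)$ realizes an $(X,Y)$-friendly swap, then $(\sigma')^{-1}=\sigma^{-1}\circ(\sigma(i)\ \sigma(j))$ realizes a $(Y,X)$-friendly swap. For item 2, the sign of a permutation is a proper $2$-colouring, since each edge of $\FS(X,Y)$ right-multiplies by a transposition. For item 3, by item 1 we may assume $Y$ is disconnected; a friendly swap only ever exchanges two $Y$-adjacent (hence same-component) vertices, so if $A$ is a component of $Y$ with $\emptyset\neq A\neq V(Y)$ then $\sigma^{-1}(A)$ is invariant along edges of $\FS(X,Y)$ but not constant on $\mathfrak S_n$. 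For item 4, every $(X,Y)$-friendly swap is also an $(\tilde X,\tilde Y)$-friendly swap once $E(X)\subseteq E(\tilde X)$ and $E(Y)\subseteq E(\tilde Y)$, so $\FS(X,Y)$ embeds (on the same vertex set) into $\FS(\tilde X,\tilde Y)$; composing with the relevant isomorphisms gives the stated form.

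The substance is item 5. Fix a cut vertex $u$ of $X$ and a cut vertex $v$ of $Y$, let $X_1$ be a smallest connected component of $X-u$, and set $P_1:=V(X_1)$ and $P_2:=V(X)\setminus(P_1\cup\{u\})$, so that $P_1,P_2\neq\emptyset$, $|P_1|\le(n-1)/2$, and $X$ has no edge between $P_1$ and $P_2$ (hence every edge of $X$ lies inside $P_1\cup\{u\}$ or inside $P_2\cup\{u\}$). Symmetrically let $Y_1$ be a smallest component of $Y-v$ and set $R_1:=V(Y_1)$, $R_2:=V(Y)\setminus(R_1\cup\{v\})$, so every edge of $Y$ lies inside $R_1\cup\{v\}$ or inside $R_2\cup\{v\}$ and $|P_1|+|R_1|\le n-1$. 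I would then show $\FS(X,Y)$ is disconnected by exhibiting a function on $V(\FS(X,Y))=\mathfrak S_n$ that is constant on connected components but not globally constant. The function I propose is
\[
\Phi(\sigma)\ :=\ \bigl|\{x\in P_1:\sigma(x)\in R_1\}\bigr|\ +\ \mathbf{1}[\sigma^{-1}(v)\in P_1]\cdot\mathbf{1}[\sigma(u)\in R_1],
\]
i.e. the number of $X_1$-positions sent into $V(Y_1)$, corrected by a $\{0,1\}$-term that flags exactly the configurations in which vertex $v$ occupies an $X_1$-position while a vertex of $R_1$ occupies the cut vertex $u$.

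To prove invariance of $\Phi$ I would use two structural observations: the count $\bigl|\{x\in P_1:\sigma(x)\in R_1\}\bigr|$ depends only on the set $\sigma(P_1)$, which a friendly swap can alter only when the swap uses an edge $\{u,x\}$ with $x\in P_1$; and the correction term can change only when a swap moves vertex $v$ onto or off of a $P_1$-position, or changes the occupant of $u$. For a swap at $\{u,x\}$ with $x\in P_1$ — say it exchanges the occupant $g$ of $u$ with the occupant $w$ of $x$, where $\{g,w\}$ lies inside $R_1\cup\{v\}$ or inside $R_2\cup\{v\}$ — I would split on whether $v\in\{g,w\}$: if not, then $g$ and $w$ are both in $R_1$ or both in $R_2$, so neither the count nor (the occupant of $u$ merely changing between two same-side vertices) the correction term moves; if $g=v$ or $w=v$, the count changes by $-\mathbf{1}[w\in R_1]$ or $+\mathbf{1}[g\in R_1]$ respectively and the correction term changes by exactly the negative of that. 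For a swap at $\{u,x\}$ with $x\in P_2$, the set $\sigma(P_1)$ is untouched, and the correction term is unaffected as well: if $v$ is not swapped, the occupant of $u$ again changes between two same-side vertices, while if $v$ is swapped then $v$ starts or ends at the cut vertex $u$, where the factor $\mathbf{1}[\sigma^{-1}(v)\in P_1]$ is $0$ both before and after. Swaps along edges inside $P_1\setminus\{u\}$ or inside $P_2\setminus\{u\}$ leave $\sigma(P_1)$, the occupant of $u$, and whether $v$ lies in $P_1$ all unchanged. For non-constancy: since $|P_1|+|R_1|\le n-1$, some bijection places all of $R_1\cup\{v\}$ outside $P_1$, giving $\Phi=0$, while placing a single vertex of $R_1$ on an $X_1$-position gives $\Phi\ge1$. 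Hence $\FS(X,Y)$ has at least two connected components.

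The delicate point — and the reason the bare count $\bigl|\{x\in P_1:\sigma(x)\in R_1\}\bigr|$ is not already invariant — is isolating the correct correction term. The ``gate coincidence'' in which vertex $v$ is parked on the cut vertex $u$ genuinely links the two sides $P_1$ and $P_2$: a swap that slides $v$ off $u$ into $P_1$ can change the count by $\pm1$ regardless of which side the displaced vertex belongs to, and only a carefully chosen correction makes $\Phi$ honestly swap-invariant across all of the cases enumerated above. I expect the full case check for that correction to be where most of the work of this item lies. (Choosing $X_1$ and $Y_1$ smallest, so that $|P_1|+|R_1|\le n-1$, is merely a convenience for the non-constancy step; a marginally longer argument removes even that.)
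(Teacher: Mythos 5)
Your proof is correct. Note that the paper does not actually prove Proposition \ref{basic_props} --- it is imported verbatim from \cite{defant2020friends} --- so there is no in-paper argument to compare against; your items 1--4 are the standard arguments, and for item 5 your invariant $\Phi$ does check out: since every edge of $X$ lies in $P_1\cup\{u\}$ or $P_2\cup\{u\}$, the only swaps that can disturb either the count or the correction term are those along edges incident to $u$, and your case split (occupant of $u$ versus occupant of the other endpoint, with or without $v$ involved) covers all of them, with the correction term cancelling the $\pm\mathbf{1}[\cdot\in R_1]$ jump exactly when $v$ crosses the gate at $u$ into or out of $P_1$. The non-constancy step also goes through, since $|P_1|+|R_1|\le n-1$ for smallest components. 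This is the same general strategy as the proof in \cite{defant2020friends} (exhibiting a quantity, equivalently a set of bijections, preserved by all friendly swaps but not constant on $\mathfrak S_n$), though your particular invariant with its explicit correction term is a self-contained and verifiable packaging of it.
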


\noindent We generalize Property (4) of Proposition \ref{basic_props}, as we shall need it later; this can be proved by a straightforward generalization of the argument provided in \cite{defant2020friends}. 
\begin{proposition} \label{subgraph_lemma}
Let $X, Y$ be graphs on $m$ vertices and $\Tilde{X}, \Tilde{Y}$ be graphs on $n \geq m$ vertices. If $X$ is isomorphic to a subgraph of $\Tilde{X}$ and $Y$ is isomorphic to a subgraph of $\Tilde{Y}$, then $\FS(X,Y)$ is isomorphic to a subgraph of $\FS(\Tilde{X}, \Tilde{Y})$. In particular, for every bijection $\psi: V(\Tilde{X}) \setminus V(X) \to V(\Tilde{Y}) \setminus V(Y)$ (considering the isomorphic copies of $X$ and $Y$ in $\Tilde{X}$ and $\Tilde{Y}$, respectively), there exists a subgraph of $\FS(\Tilde{X}, \Tilde{Y})$ isomorphic to $\FS(X, Y)$ which has as its vertices all bijections consistent with $\psi$.
\end{proposition}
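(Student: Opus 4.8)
The plan is to realize $\FS(X,Y)$ inside $\FS(\tilde X,\tilde Y)$ via the obvious ``extension by $\psi$'' map. First I would use the hypotheses to identify $V(X)$ and $V(Y)$ with their isomorphic copies in $\tilde X$ and $\tilde Y$, so that we may assume outright $V(X)\subseteq V(\tilde X)$, $E(X)\subseteq E(\tilde X)$, $V(Y)\subseteq V(\tilde Y)$, and $E(Y)\subseteq E(\tilde Y)$; fix a bijection $\psi:V(\tilde X)\setminus V(X)\to V(\tilde Y)\setminus V(Y)$. Define $\Phi:V(\FS(X,Y))\to V(\FS(\tilde X,\tilde Y))$ by letting $\Phi(\sigma)$ be the map $V(\tilde X)\to V(\tilde Y)$ that restricts to $\sigma$ on $V(X)$ and to $\psi$ on $V(\tilde X)\setminus V(X)$. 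Since $\sigma$ and $\psi$ are bijections between complementary sets, $\Phi(\sigma)$ is a well-defined bijection, so $\Phi$ is well-defined; it is evidently injective, and its image is exactly the set of bijections that agree with $\psi$ off $V(X)$, i.e.\ the bijections consistent with $\psi$.

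Next I would check that $\Phi$ is edge-preserving. Suppose $\{\sigma,\sigma'\}\in E(\FS(X,Y))$, witnessed by $\{a,b\}\in E(X)$ with $\{\sigma(a),\sigma(b)\}\in E(Y)$ and $\sigma'=\sigma\circ(a\ b)$. Then $\{a,b\}\in E(X)\subseteq E(\tilde X)$, and $\{\Phi(\sigma)(a),\Phi(\sigma)(b)\}=\{\sigma(a),\sigma(b)\}\in E(Y)\subseteq E(\tilde Y)$. Moreover $\Phi(\sigma')$ and $\Phi(\sigma)$ agree on $V(\tilde X)\setminus V(X)$ (both equal $\psi$ there), agree on $V(X)\setminus\{a,b\}$ (since $\sigma'$ and $\sigma$ do), and satisfy $\Phi(\sigma')(a)=\sigma'(a)=\sigma(b)=\Phi(\sigma)(b)$ and symmetrically at $b$. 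Hence $\{\Phi(\sigma),\Phi(\sigma')\}\in E(\FS(\tilde X,\tilde Y))$. Therefore $\Phi$ is an isomorphism from $\FS(X,Y)$ onto the subgraph of $\FS(\tilde X,\tilde Y)$ whose vertex set is $\Phi(V(\FS(X,Y)))$ — all bijections consistent with $\psi$ — and whose edge set is $\Phi(E(\FS(X,Y)))$, which is the ``in particular'' assertion; the first sentence of the proposition follows immediately.

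There is essentially no genuine obstacle here; the proof is a direct verification, and its main content is bookkeeping about which of $V(X)$ or $V(\tilde X)\setminus V(X)$ a given vertex lies in. The one point I would be careful to state correctly is that we can only claim ``isomorphic to a subgraph'' and not to an \emph{induced} subgraph: the converse implication $\Phi(\sigma)\sim\Phi(\sigma')\Rightarrow\sigma\sim\sigma'$ can fail when $X$ is a non-induced subgraph of $\tilde X$, because an $(\tilde X,\tilde Y)$-friendly swap between two $\psi$-consistent bijections is necessarily supported on two vertices of $V(X)$ (otherwise $\psi$-consistency is violated) and uses a $Y$-edge between their images, but the $\tilde X$-edge it traverses need not lie in $E(X)$. (If one additionally assumes $X$ is an \emph{induced} subgraph of $\tilde X$, the identical argument yields the induced-subgraph statement, but that is not needed here.)
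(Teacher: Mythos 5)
Your proof is correct and is exactly the ``straightforward generalization of the argument provided in \cite{defant2020friends}'' that the paper invokes without writing out: extend each bijection by $\psi$ on the complement and verify edge-preservation. Your closing remark about why only ``subgraph'' (not ``induced subgraph'') can be claimed is accurate and a worthwhile caveat, though not required by the statement.
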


\noindent The following proposition characterizes $\FS(X, Y)$ in terms of the components of $X$, and in particular shows that we need only consider the setting in which the graphs $X$ and $Y$ are connected.
\begin{proposition}[\cite{defant2020friends}] \label{fs_decomp}
For graphs $X$ and $Y$, let $X_1, \dots, X_r$ be the connected components of $X$, with cardinalities $n_1, \dots, n_r$, respectively. Let $\mathcal{OP}_{n_1, \dots, n_r}(Y)$ denote the collection of ordered set partitions $(V_1, \dots, V_r)$ of $V(Y)$ such that $|V_i| = n_i$ for all $i \in [r]$. Then 
\begin{align*}
    \FS(X, Y) \cong \bigoplus_{(V_1, \dots, V_r) \in \mathcal{OP}_{n_1, \dots, n_r}(Y)} (\FS(X_1, Y|_{V_1}) \square \dots \square \FS(X_r, Y |_{V_r}))
\end{align*}
\end{proposition}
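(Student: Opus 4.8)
The plan is to exhibit an explicit graph isomorphism $\Phi$ from $\FS(X,Y)$ to the right-hand side, and then check that both $\Phi$ and $\Phi^{-1}$ preserve adjacency. Given a bijection $\sigma\colon V(X)\to V(Y)$, set $V_i = \sigma(V(X_i))$ for each $i\in[r]$. Because $\{V(X_i)\}_{i\in[r]}$ partitions $V(X)$ and $\sigma$ is a bijection, $(V_1,\dots,V_r)$ is an ordered set partition of $V(Y)$ with $|V_i| = n_i$, hence an element of $\mathcal{OP}_{n_1,\dots,n_r}(Y)$; moreover $\sigma|_{V(X_i)}$ is then a bijection $V(X_i)\to V_i$, i.e.\ a vertex of $\FS(X_i, Y|_{V_i})$. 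Define $\Phi(\sigma)$ to be the tuple $(\sigma|_{V(X_1)},\dots,\sigma|_{V(X_r)})$, regarded as a vertex of the summand indexed by $(V_1,\dots,V_r)$. Conversely, any choice of $(V_1,\dots,V_r)\in\mathcal{OP}_{n_1,\dots,n_r}(Y)$ together with bijections $\sigma_i\colon V(X_i)\to V_i$ glues to a unique bijection $V(X)\to V(Y)$, so $\Phi$ is a bijection on vertex sets. (As a sanity check, the right-hand side has $\binom{n}{n_1,\dots,n_r}\prod_{i}n_i! = n!$ vertices, matching $|V(\FS(X,Y))|$.)

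Next I would show $\Phi$ sends edges to edges. An edge of $\FS(X,Y)$ arises from an $(X,Y)$-friendly swap along some $\{a,b\}\in E(X)$; since the endpoints of an edge of $X$ lie in a common connected component, say $a,b\in V(X_k)$, the swap fixes $\sigma(c)$ for all $c\notin\{a,b\}$ and in particular leaves each image set $\sigma(V(X_i))$ unchanged — so both endpoints of the edge map under $\Phi$ into the \emph{same} summand $(V_1,\dots,V_r)$. Within that summand the two tuples agree in every coordinate except the $k$-th, where they differ by exchanging the images of $a$ and $b$; since $\{a,b\}\in E(X_k)$ and $\{\sigma(a),\sigma(b)\}\in E(Y)$ with $\sigma(a),\sigma(b)\in V_k$, we have $\{\sigma(a),\sigma(b)\}\in E(Y|_{V_k})$, so this is exactly an $(X_k, Y|_{V_k})$-friendly swap. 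Hence the two images differ in precisely one coordinate by an edge of the corresponding factor, making them adjacent in the Cartesian product, hence in the disjoint union. For the reverse direction, an edge of the right-hand side lies within a single summand (the disjoint union creates no edges across summands) and, by definition of the Cartesian product, changes exactly one coordinate $k$ by an edge of $\FS(X_k, Y|_{V_k})$; unwinding, this is an $(X_k, Y|_{V_k})$-friendly swap along some $\{a,b\}\in E(X_k)\subseteq E(X)$ with $\{\sigma_k(a),\sigma_k(b)\}\in E(Y|_{V_k})\subseteq E(Y)$, which glues back to an $(X,Y)$-friendly swap between the corresponding bijections. Thus $\Phi$ is a graph isomorphism.

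The argument is essentially bookkeeping, and I expect no genuine obstacle; the two points that deserve care are (i) that an $(X,Y)$-friendly swap can neither cross between components of $X$ nor alter the induced ordered set partition of $V(Y)$, which is what confines every edge of $\FS(X,Y)$ to a single summand, and (ii) that within a fixed summand the Cartesian-product adjacency corresponds \emph{exactly} to single-component friendly swaps, with no edges gained or lost in either direction. Verifying (i) and (ii) carefully is the crux; everything else follows from the definitions.
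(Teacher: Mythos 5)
Your argument is correct and complete; it is the standard bookkeeping proof of this decomposition. The paper itself gives no proof (the proposition is quoted from \cite{defant2020friends}), and your isomorphism $\sigma \mapsto (\sigma|_{V(X_1)},\dots,\sigma|_{V(X_r)})$, together with the observation that a friendly swap occurs along an edge of a single component and hence preserves the induced ordered set partition of $V(Y)$, is exactly the intended argument.
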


\section{On Making $\FS(X, Y)$ Connected For All ``Reasonable" $X$}

As remarked in \cite{defant2020friends}, one direction of study is to take a ``reasonable" class of all graphs $X$ on $n$ vertices satisfying some property, and determine the sparsest graph $Y$ on $n$ vertices such that $\FS(X, Y)$ is connected. Certainly, our set should be restricted to some subset of connected graphs, as $\FS(X, Y)$ would be disconnected for some choice of $X$ otherwise. If we consider the set of all connected $X$, then necessarily $Y = K_n$ since $\FS(\Path_n, Y)$ is connected if and only if $Y = K_n$ (see Theorem 3.1 of \cite{defant2020friends}). We can similarly ask this question when we take our set to be all biconnected graphs $X$. The simplest example of an $n$-vertex biconnected graph is considered to be $\Cycle_n$, for which there are known results concerning the connectivity of $\FS(\Cycle_n, Y)$.

\subsection{Background}

Corollary 4.14 of \cite{defant2020friends} states the following, which characterizes when $\FS(\Cycle_n, Y)$ is connected.
\begin{theorem}[\cite{defant2020friends}] \label{orig_result}
For $n \geq 3$, the graph $\FS(\Cycle_n, Y)$ is connected if and only if $\overline{Y}$ is a forest consisting of trees $\mathcal T_1, \dots, \mathcal T_r$ such that $\gcd(|V(\mathcal T_1)|, \dots, |V(\mathcal T_r)|) = 1$.
\end{theorem}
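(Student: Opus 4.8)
The plan is to establish both directions of the equivalence. Throughout I use Proposition~\ref{basic_props}(1) to regard a vertex of $\FS(\Cycle_n, Y)$ as a placement of the ``people'' $V(Y)$ onto the $n$ ``seats'' of a cycle $\mathbb{Z}/n$, where an edge of $\FS(\Cycle_n, Y)$ exchanges two people who are friends (adjacent in $Y$) and currently occupy consecutive seats; write $s(v) \in \mathbb{Z}/n$ for the seat of person $v$. In fact I would aim to prove the finer statement that, whenever $\overline Y$ is a forest with trees of sizes $n_1, \dots, n_r$, the graph $\FS(\Cycle_n, Y)$ has exactly $\gcd(n_1, \dots, n_r)$ connected components; the theorem is then the combination of this (for the ``forest'' situations) with a separate obstruction for non-forests.

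\emph{Necessity.} Suppose $\FS(\Cycle_n, Y)$ is connected. If $\overline Y$ contains a cycle $v_1 v_2 \cdots v_k v_1$ with $k \ge 3$, then since consecutive $v_i$ are strangers in $Y$ no friendly swap ever exchanges $v_i$ with $v_{i+1}$, so the representative $g_i \in \{1, \dots, n-1\}$ of $s(v_{i+1}) - s(v_i) \pmod n$ never reaches $0$ and hence evolves as a genuine integer, changing by $\pm 1$ exactly when a swap moves $v_i$ or $v_{i+1}$. A swap moves exactly two people, and moving $v_i$ perturbs $g_{i-1}$ and $g_i$ by opposite signs, so $W := \tfrac1n \sum_{i=1}^{k} g_i$ — an integer, the $g_i$ telescoping to $0 \pmod n$ around the cycle — is invariant on every edge of $\FS(\Cycle_n, Y)$. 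Since for $n \ge 3$, $k \ge 3$ one can realize placements with $W = 1$ (the $v_i$ in cyclic order on consecutive seats) and with $W = 2$ (``doubly wound''), this contradicts connectivity, so $\overline Y$ must be a forest. If moreover $d := \gcd(n_1, \dots, n_r) > 1$, I would construct a $\mathbb{Z}/d$-valued invariant of the components, giving the ``$\ge d$ components'' half of the finer statement. The prototype is the perfect-matching case ($n_i = 2$, $d = 2$): fixing a base seat and a reference orientation of each stranger-pair, the parity of the number of pairs appearing in reversed cyclic order is invariant, since a swap avoiding the base seat preserves every pair's order while one crossing it reverses exactly two pairs, and $2 \mid n_i$ for all $i$ nullifies the net effect. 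For general $d$ the same idea, applied to the cyclic offsets of each tree's block of occupied seats, yields a $\mathbb{Z}/d$ invariant; making this precise once some tree has more than two vertices (so that swaps internal to a tree must be tracked) is the delicate step. The invariant is nonconstant, so $d = 1$.

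\emph{Sufficiency.} Conversely, if $\overline Y$ is a forest with jointly coprime tree sizes, I must show $\FS(\Cycle_n, Y)$ is connected, i.e. that the component count above equals $1$. Note the edge set of $\FS(\Cycle_n, Y)$ is that of $\FS(\Path_n, Y)$ together with one extra family, the ``wrap-around'' swaps across seats $n$ and $1$. The plan is to take the explicit description of the components of $\FS(\Path_n, Y)$ from~\cite{defant2020friends} and show the wrap-around swaps merge them all, the only conceivable residual obstruction being a global cyclic rotation whose order divides $\gcd(n_1, \dots, n_r)$ and therefore collapses under coprimality. Equivalently, one argues directly: bring the people into a canonical cyclic pattern one at a time, ``park'' finished blocks, and interleave partial rotations of the several trees to cancel the accumulated offset — coprimality being exactly what makes an arbitrary offset cancellable.

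\emph{Main obstacle.} The crux is sufficiency. The naive reductions fail: deleting one tree from $\overline Y$, or a single leaf, need not preserve joint coprimality of the remaining sizes (for instance $6, 10, 15$ are jointly coprime but pairwise non-coprime), so one cannot peel off one tree at a time and must instead exploit the coprimality of all the sizes at once — e.g.\ realizing an arbitrary overall rotation as a B\'ezout combination of partial rotations contributed by several trees simultaneously. A secondary difficulty is the clean formulation and verification of the $\mathbb{Z}/d$ invariant for trees larger than a single edge. The ``non-forest'' half of necessity, by contrast, should be essentially complete as sketched via the winding number $W$.
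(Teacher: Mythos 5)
First, a bookkeeping point: the paper does not prove Theorem \ref{orig_result} at all; it is imported verbatim from \cite{defant2020friends} (Corollary 4.14 there), so your attempt can only be measured against that source. The one part of your proposal that is essentially complete is the non-forest half of necessity: the winding number $W=\tfrac1n\sum_i g_i$ is well defined because consecutive $v_i$ are strangers and hence no $g_i$ can wrap from $1$ to $n-1$, it is conserved because a swap moves at most two of the $v_i$ and each such move perturbs two consecutive $g$'s by opposite signs, and $W=1$ and $W=2$ are both realizable for $k,n\ge 3$. That argument I would accept as written.

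The rest is a plan rather than a proof, and the deferred steps are exactly where the theorem's content lives. For the necessity of $\gcd(n_1,\dots,n_r)=1$ you only construct the $\mathbb{Z}/d$ invariant when every tree of $\overline Y$ is a single edge, and you yourself flag the general case as "the delicate step"; it genuinely is, since the people of one tree need not occupy contiguous seats and swaps \emph{between two members of the same tree} do occur (any two members of a tree that are nonadjacent in $\overline Y$ are friends in $Y$), so "cyclic offsets of each tree's block" is not yet a well-defined quantity, let alone an invariant. More seriously, the entire sufficiency direction — the harder implication — is pure intent: nothing is verified about the claim that the wrap-around swaps merge all components of $\FS(\Path_n,Y)$ up to a residual rotation, that a rotation by $n_j$ is realizable for each tree, or that the B\'ezout combination of such rotations can be executed without disturbing already-placed people. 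For reference, the proof in \cite{defant2020friends} takes a more structural route: each $\sigma$ is assigned an acyclic orientation of $\overline Y$, the components of $\FS(\Path_n,Y)$ biject with acyclic orientations, the wrap-around swaps implement source-to-sink flips, so components of $\FS(\Cycle_n,Y)$ biject with toric equivalence classes of acyclic orientations of $\overline Y$; one then shows a forest has exactly one toric class if and only if its tree sizes are jointly coprime. Your sketch gestures at this picture but, as submitted, proves neither the invariant (lower bound on the number of components) nor the merging (upper bound).
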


\noindent By Theorem \ref{orig_result}, we must have that $Y$ is such that $\overline{Y}$ is a forest with trees of jointly coprime size, since $\Cycle_n$ is a biconnected graph; as such, $Y$ is necessarily of this form. Conjecture 7.1 of the same paper (Theorem \ref{conjec_7.1} below) aims to show that this is also sufficient, namely that for any biconnected $X$, any $Y$ satisfying this statement yields that $\FS(X, Y)$ is connected.

\begin{theorem} \label{conjec_7.1}
Let $Y$ be a graph on $n \geq 3$ vertices, such that $\overline{Y}$ is a forest consisting of trees $\mathcal T_1, \dots, \mathcal T_r$ with $\gcd(|V(\mathcal T_1)|, \dots, |V(\mathcal T_r)|) = 1$. If $X$ is a biconnected graph on $n$ vertices, then $\FS(X,Y)$ is connected.
\end{theorem}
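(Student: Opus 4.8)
The plan is to leverage the structural theory of biconnected graphs — specifically the open ear decomposition — to reduce the general biconnected case to the cycle case handled by Theorem \ref{orig_result}. Recall that every biconnected graph $X$ on $n$ vertices admits an open ear decomposition $X = \mathcal C \cup P_1 \cup \dots \cup P_k$, where $\mathcal C$ is a cycle and each $P_i$ is a path whose two endpoints (and only those) already lie in $\mathcal C \cup P_1 \cup \dots \cup P_{i-1}$. The first step is to fix such a decomposition and induct on the number of ears $k$. The base case $k=0$ is exactly $X = \Cycle_n$ on all $n$ vertices: but here one must be slightly careful, since $\mathcal C$ in the ear decomposition need not span $V(X)$. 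So really the base case is a Hamiltonian cycle, and more generally one should think of the induction as being on the pair $(X, Y)$ with $X$ biconnected; I would first reduce to the case where $\mathcal C$ is chosen as long as possible, and handle the "leftover" vertices via the subgraph machinery of Proposition \ref{subgraph_lemma}.

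The core inductive step: suppose $X' = X \setminus (\text{internal vertices and edges of the last ear } P_k)$ is biconnected on $n' \le n$ vertices, and we know $\FS(X', Y|_{V'})$ is connected for the appropriate induced subgraph structure. Adding back the ear $P_k$ connects two vertices $a,b$ of $X'$ by a new internal path. The key claim to establish is a \emph{transfer lemma}: given a biconnected graph $W$ containing an induced cycle or ear whose deletion leaves a biconnected $W'$, if $\FS(W', Y')$ is connected for all valid $Y'$ of the coprime-forest-complement type, then so is $\FS(W, Y)$. To prove such a lemma I would use the natural idea that, since $X'$ already allows us to realize any permutation supported on $V(X')$ (via connectivity of $\FS(X', Y|_{V'})$ and Proposition \ref{fs_decomp} to handle the complementary block), the extra ear $P_k$ only needs to let us "rotate" tokens through the new internal vertices and merge them with the rest. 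Concretely: the cycle $\mathcal C$ together with the ear $P_k$ contains a shorter cycle $\mathcal C'$; one shows any configuration can first be routed (using $\FS(X',\cdot)$ connectivity) so that the tokens one wishes to move lie on $\mathcal C'$, then uses cycle-type moves on $\mathcal C'$, then routes back. The arithmetic hypothesis on $\overline Y$ (forest, jointly coprime tree sizes) is what guarantees at each stage that the relevant restricted friends-and-strangers graph is connected — one must check that $\overline{Y|_{V_i}}$ inherits (or can be arranged to inherit) a compatible structure, which is where the coprimality is genuinely used, exactly as in \cite{defant2020friends}'s analysis of $\Cycle_n$.

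The main obstacle I anticipate is precisely this transfer lemma, and within it, two subtleties. First, when we restrict $Y$ to a subset $V'$ of vertices, $\overline{Y|_{V'}}$ need not be a forest with jointly-coprime tree sizes — deleting vertices from a forest keeps it a forest but can destroy coprimality, or even make the complement disconnected in the wrong way. So the induction cannot be a naive "delete an ear, restrict $Y$" recursion; one likely needs to induct on $X$ while keeping $Y$ (and $n$) \emph{fixed}, phrasing the ear-addition as going from a \emph{spanning} biconnected subgraph $X' \subseteq X$ to $X$, so that no restriction of $Y$ is needed at the top level — the restrictions only appear inside the proof of connectivity of each $\FS(X', Y)$ via Proposition \ref{fs_decomp}, at which point one would need an auxiliary statement: $\FS(X'', Y'')$ is connected whenever $X''$ is biconnected and $Y''$ has the coprime-forest complement, \emph{strengthened} to handle all the induced-subgraph cases that arise. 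Second, there is the genuinely delicate point (present already in the cycle case) that when $\overline Y$ has a component of size $1$ — i.e. $Y$ has a universal vertex — the token on it moves freely and essentially acts as the "hole" in a $15$-puzzle; Wilson's theorem (\cite{wilson1974graph}) on $\FS(X, \Star_n)$ for biconnected $X$ is the right tool, and the exceptional cases in Wilson's theorem ($\theta_0$ and the cycle) must be checked not to cause trouble. I would structure the write-up so that the universal-vertex case is dispatched first via Wilson, reducing to the case where every tree of $\overline Y$ has size $\ge 2$, and then run the ear induction. The hardest single estimate will be showing that the ear, together with appropriate moves in $\FS(X', Y)$, suffices to generate a transposition of two specified tokens while fixing all others — this is the "double transposition / commutator" style argument that pervades this area, and getting it to work for an arbitrary ear length (not just an edge) is the crux.
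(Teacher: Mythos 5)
Your high-level plan matches the paper's strategy in outline — dispatch the universal-vertex case via Wilson's theorem first, then run an induction on the number of ears with $Y$ and $n$ held fixed, with the crux being the simulation of a transposition of two specified tokens while fixing all others. But the proposal has two genuine gaps. First, and most importantly, the crux you identify is the entire content of the proof, and you do not supply it. The paper's Lemma \ref{r=1} proves exactly this statement: for a one-ear biconnected graph (normalized as $\Cycle_n$ with the edge $\{1,n\}$ deleted and two chords added) and any $Y$ with $\overline{Y}$ a forest having at least two trees, the swap of $\sigma(1)$ and $\sigma(n)$ along the phantom edge $\{1,n\}$ can be simulated. Its proof is a long case analysis on where the leaves and their unique non-neighbors in $\overline{Y}$ sit relative to the three cycles of the one-ear graph, repeatedly engineering an isolated vertex in $\overline{Y|_{\sigma(V(\mathcal C))}}$ so that Theorem \ref{orig_result} applies to a cycle subgraph. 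Note also the precise form of the auxiliary hypothesis that makes the induction close: one does not need coprimality in the lemma, only ``forest with at least two trees,'' which \emph{can} be arranged for the restriction of $Y$ to the relevant subgraph; your proposal gestures at needing ``a strengthened auxiliary statement'' but does not identify this weakening, which is the key to avoiding the coprimality-loss problem you correctly flag.

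Second, your proposed inductive scaffold — passing from a \emph{spanning biconnected subgraph} $X' \subseteq X$ with one fewer ear up to $X$ — does not work as stated. A biconnected graph with $r \geq 1$ ears need not contain any spanning biconnected proper subgraph: in a $\theta$-graph all three of whose paths have length at least $2$, deleting any edge creates a degree-one vertex. The paper's actual inductive step is different and is not a subgraph relation at all: given $X$ with $r$ ears, one chooses a cycle $P_0'$ through the endpoints of the last ear $P_r$, ``unfolds'' $P_0' \cup P_r$ into a single long cycle to obtain a graph $X_0$ on the \emph{same} vertex set with $r-1$ ears, so that $X$ is $X_0$ with one edge $\{v,w\}$ deleted and two edges added. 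Then $\FS(X_0,Y)$ is connected by induction, every edge of $\FS(X_0,Y)$ missing from $\FS(X,Y)$ is a swap along $\{v,w\}$, and Lemma \ref{r_geq_2} (reducing to Lemma \ref{r=1} on the two-ear subgraph $[P_0',P_r]$, after moving a token from a second tree of $\overline{Y}$ into that subgraph if necessary) simulates it. Without this edge-surgery formulation and without a proof of the simulation lemma, the proposal remains a correct identification of the difficulty rather than a proof.
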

\noindent This section is dedicated to proving this statement. Toward this, we begin by providing important results invoked throughout the proceeding argument. 

\begin{definition} \label{open_ear_decomp}
An \textbf{open ear decomposition} of a graph $G$ is a finite ordered sequence of subgraphs $[P_0, P_1, \dots, P_r]$ of $G$ such that the following properties hold.
\begin{itemize}
    \item The edge sets of the $P_i$ partition the edges of $G$, i.e.
    $E(G) = \bigsqcup_{i=0}^r E(P_i)$.
    \item $P_0$ is a simple cycle, and for all $i \geq 1$, $P_i$ is a path that is not a simple cycle (i.e. its endpoints are not the same vertex in the graph $G$).
    \item For $i \geq 1$, each endpoint of $P_i$ is contained in some previous subgraph $P_j$, for $j < i$, while all internal vertices of $P_i$ are not contained in any such $P_j$.
\end{itemize}
\end{definition}

\begin{figure}[ht]
    \centering
    \includegraphics[width=0.2\textwidth]{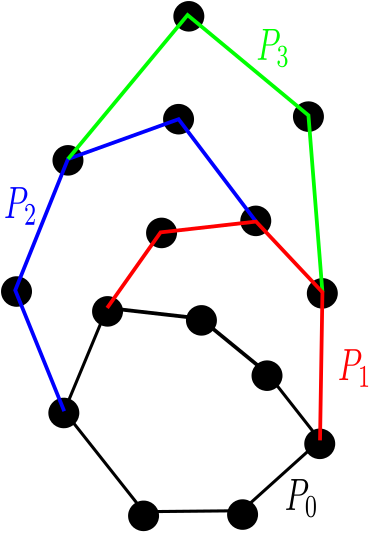}
    \caption{The open ear decomposition of a biconnected graph, illustrated here for $r=3$.}
    \label{fig:open_ear_fig}
\end{figure}

\noindent This yields a natural characterization of all biconnected graphs on $n \geq 3$ vertices. 
\begin{proposition}[\cite{whitney1992non}] \label{bicon}
A simple graph $G$ on $n \geq 3$ vertices has an open ear decomposition if and only if it is biconnected.
\end{proposition}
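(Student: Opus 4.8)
The plan is to prove both implications of Proposition~\ref{bicon} directly: for the ``only if'' direction I would induct on the number of ears, and for the ``if'' direction I would run a greedy ear-extension procedure that repeatedly appends a new ear until the whole graph is exhausted.

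For the ``only if'' direction, suppose $G$ admits an open ear decomposition $[P_0, P_1, \dots, P_r]$ and set $H_i := P_0 \cup \dots \cup P_i$ (as a subgraph of $G$). I would show by induction on $i$ that each $H_i$ is biconnected. The base case $H_0 = P_0$ is a simple cycle, which is biconnected for $n \geq 3$. For the inductive step, write the endpoints of $P_i$ as $a, b$; by Definition~\ref{open_ear_decomp}, $a, b \in V(H_{i-1})$, $a \neq b$ (since $P_i$ is not a simple cycle), and every internal vertex of $P_i$ lies outside $V(H_{i-1})$. Then $H_i$ is clearly connected, and to see it has no cut vertex I take an arbitrary $w \in V(H_i)$ and verify $H_i - w$ is connected by cases: if $w$ is internal to $P_i$, then $H_{i-1}$ survives intact and the two subpaths of $P_i - w$ remain attached to it at $a$ and $b$; if $w \in V(H_{i-1})$, then $H_{i-1} - w$ is connected by the inductive hypothesis, and the internal vertices of $P_i$ stay attached to it through whichever of $a, b$ differs from $w$ (at least one does, as $a \neq b$). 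Hence $H_i$ is biconnected, and taking $i = r$ gives that $G$ is biconnected, once we observe that $G$ has no isolated vertices and so $V(G) = V(H_r)$ (one may simply interpret the decomposition as being required to cover all of $V(G)$).

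For the ``if'' direction, suppose $G$ is biconnected. Since $G$ is connected on $\geq 3$ vertices without a cut vertex, every vertex has degree at least $2$, so $G$ contains a simple cycle; let $P_0$ be one. I then grow a sequence $H = P_0 \cup P_1 \cup \dots$, maintaining the invariant that the ears chosen so far form a valid partial open ear decomposition. Given $H \subsetneq G$, I produce the next ear as follows. If $V(H) \neq V(G)$ then, since $G$ is connected, some edge $\{u, v\} \in E(G)$ has $u \in V(H)$ and $v \notin V(H)$; because $G$ is $2$-connected, $G - u$ is connected, so there is a path in $G - u$ from $v$ to $V(H) \setminus \{u\}$ (which is nonempty, as $|V(P_0)| \geq 3$). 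Truncating this path at the first vertex $w$ it meets in $V(H)$ and prepending the edge $\{u, v\}$ yields a path from $u$ to $w$ whose internal vertices avoid $V(H)$, and $u \neq w$ since $u \notin V(G - u)$; this is a legitimate new ear. If instead $V(H) = V(G)$ but $E(H) \neq E(G)$, any edge of $E(G) \setminus E(H)$ is a trivial ear with both endpoints already in $H$. In either case $|E(H)|$ strictly increases, so the process terminates, necessarily with $H = G$, producing an open ear decomposition of $G$.

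The main obstacle is the greedy step in the ``if'' direction: one must guarantee that when $V(H) \subsetneq V(G)$ the new ear is a genuine \emph{path} that returns to $H$, rather than a walk that gets stranded among the newly visited vertices. This is exactly where $2$-connectivity is indispensable --- deleting the attachment vertex $u$ leaves $G$ connected, so the external endpoint $v$ still has a route back to $V(H) \setminus \{u\}$, and selecting a shortest such route guarantees the internal vertices stay outside $V(H)$. Everything else is routine verification against Definition~\ref{open_ear_decomp}.
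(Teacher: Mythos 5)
The paper does not prove this proposition at all --- it is quoted from Whitney's paper \cite{whitney1992non} as a known fact --- so there is nothing to compare against except the standard argument, which is exactly what you give. Your proof is correct: the induction showing each partial union $H_i$ is biconnected handles the two cases for the deleted vertex properly (using that the ear's endpoints are distinct), and the greedy construction in the converse correctly uses $2$-connectivity of $G-u$ to route the new ear back to $V(H)\setminus\{u\}$ and truncates at the first return to $V(H)$ so that internal vertices stay outside. The only subtlety is the one you already flag, namely that Definition~\ref{open_ear_decomp} only requires the ears to partition $E(G)$, so one must read the decomposition as covering $V(G)$ (equivalently, exclude isolated vertices) for the ``only if'' direction to be literally true; with that convention your argument is complete.
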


\noindent The following proposition will be referenced briefly in our study of biconnected graphs with one ear. In particular, it follows immediately from this proposition that any open ear decomposition of a biconnected graph $X$ must have the same number of ears $r$.
\begin{proposition}[\cite{miller1986efficient}] \label{ear_eq}
A biconnected graph $G$ with an open ear decomposition with $r$ ears must satisfy $r = |E(G)| - |V(G)|$. ($r$ is sometimes called the Betti number of $G$.)
\end{proposition}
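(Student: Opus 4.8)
The plan is to induct on the index $i$ of the partial unions $G_i := P_0 \cup P_1 \cup \cdots \cup P_i$ arising from the open ear decomposition $[P_0, P_1, \dots, P_r]$ of $G$, tracking the quantity $|E(G_i)| - |V(G_i)|$. For the base case, $G_0 = P_0$ is a simple cycle, say on $k$ vertices; a cycle has exactly as many edges as vertices, so $|E(G_0)| - |V(G_0)| = 0$.

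For the inductive step, I would examine what adding the ear $P_i$ (for $i \geq 1$) does to each of $|E|$ and $|V|$. Write $\ell_i \geq 0$ for the number of internal vertices of the path $P_i$. By the third bullet of Definition \ref{open_ear_decomp}, these $\ell_i$ internal vertices lie in no $P_j$ with $j < i$, hence not in $G_{i-1}$, so they are exactly the vertices of $G_i$ not already present in $G_{i-1}$; thus $|V(G_i)| = |V(G_{i-1})| + \ell_i$. By the first bullet, the edge sets of the $P_j$ are pairwise disjoint, so all $\ell_i + 1$ edges of the path $P_i$ are new; thus $|E(G_i)| = |E(G_{i-1})| + \ell_i + 1$. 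Subtracting, $|E(G_i)| - |V(G_i)| = \bigl(|E(G_{i-1})| - |V(G_{i-1})|\bigr) + 1$, so the quantity increases by exactly one with each ear. Combining with the base case gives $|E(G_r)| - |V(G_r)| = r$.

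Finally, I would argue $G_r = G$: the first bullet of Definition \ref{open_ear_decomp} gives $E(G_r) = \bigsqcup_{i=0}^r E(P_i) = E(G)$, and since a biconnected graph on $n \geq 3$ vertices has no isolated vertices, every vertex of $G$ is incident to some edge and hence lies in $G_r$; so $V(G_r) = V(G)$ as well. Therefore $r = |E(G)| - |V(G)|$, as claimed.

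The argument is essentially bookkeeping, so I do not anticipate a genuine obstacle; the only point requiring care is the inductive step, where one must invoke both the edge-partition property and the ``new internal vertices'' property of the decomposition to be certain that the $\ell_i$ counted vertices and $\ell_i+1$ counted edges are genuinely new and not double-counted — precisely the content of Definition \ref{open_ear_decomp}. One should also note that the count remains valid when $P_i$ is a single edge ($\ell_i = 0$), in which case it contributes one edge and no vertices, still raising $|E| - |V|$ by exactly one.
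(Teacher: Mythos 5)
Your argument is correct. Note, however, that the paper does not prove this proposition at all: it is stated as a known result imported from the cited reference on ear decompositions, so there is no in-paper proof to compare against. Your induction on the partial unions $G_i = P_0 \cup \cdots \cup P_i$ is the standard self-contained argument, and the bookkeeping is right: the base cycle contributes $|E|-|V|=0$; each ear $P_i$ with $\ell_i$ internal vertices contributes exactly $\ell_i$ new vertices (by the third bullet of Definition \ref{open_ear_decomp}, the internal vertices are new and the endpoints are old) and exactly $\ell_i+1$ new edges (by the edge-partition bullet), so the invariant rises by $1$ per ear; and $G_r = G$ because the ears partition $E(G)$ and a biconnected graph on $n \geq 3$ vertices has no isolated vertices. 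You also correctly handle the degenerate case $\ell_i = 0$. The only cosmetic remark is that your appeal to ``no isolated vertices'' needs only connectedness, which biconnectivity supplies; nothing stronger is required.
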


\noindent Define the graph $\theta_0$ to be the following, relevant for the case where $n = 7$.

\begin{figure}[ht]
    \centering
    \includegraphics[width=0.15\textwidth]{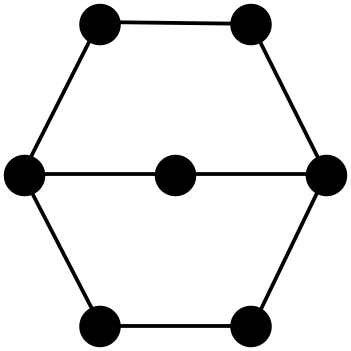}
    \caption{The graph $\theta_0$.}
    \label{fig:theta_0}
\end{figure}

\begin{proposition}[\cite{wilson1974graph}]
Let $Y$ be a biconnected graph on $n \geq 3$ vertices that is not isomorphic to $\theta_0$ or $\Cycle_n$. If $Y$ is not bipartite, then $\FS(\Star_n, Y)$ is connected. If $Y$ is bipartite, then $\FS(\Star_n, Y)$ has exactly two connected components, each of size $n!/2$. The graph $\FS(\Star_7, \theta_0)$ has exactly $6$ connected components.
\end{proposition}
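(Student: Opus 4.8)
The plan is to recognise this as Wilson's theorem \cite{wilson1974graph} and reconstruct its proof in the present language. I would begin by setting up the dictionary: by Proposition~\ref{basic_props}(1), $\FS(\Star_n, Y) \cong \FS(Y, \Star_n)$, and in $\FS(Y, \Star_n)$ we regard the central vertex of $\Star_n$ as a ``blank'' occupying one vertex of $Y$ and the $n-1$ leaves as distinguishable tokens occupying the remaining vertices; an $(\Star_n, Y)$-friendly swap is then exactly a legal slide of a token into an adjacent blank. Since $Y$ is connected the blank can be driven to every vertex, and a routine coset argument shows that the number of connected components of $\FS(\Star_n, Y)$ equals $(n-1)!/|\mathrm{puz}(Y)|$, where the \emph{puzzle group} $\mathrm{puz}(Y) \le \mathfrak{S}_{n-1}$ is the group of token-permutations realised by closed blank-walks based at a fixed vertex. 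Thus the proposition amounts to: $\mathrm{puz}(Y) = \mathfrak{S}_{n-1}$ for non-bipartite $Y$ and $\mathrm{puz}(Y) = \mathfrak{A}_{n-1}$ for bipartite $Y$ (in both cases $Y$ neither a cycle nor $\theta_0$), and $|\mathrm{puz}(\theta_0)| = 120$.

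Two elementary observations come first. (a) Driving the blank once around a cycle of length $\ell$ in $Y$ cyclically shifts the $\ell-1$ tokens on that cycle, hence realises an $(\ell-1)$-cycle, which as a closed walk of length $\ell$ has sign $(-1)^{\ell}$; conjugating such rotations by blank-walks (which both moves the blank and relabels) shows that $\mathrm{puz}(Y)$ contains a large supply of cyclic rotations together with their conjugates. (b) A single slide is a transposition of the $n$ symbols, so a closed blank-walk of length $k$ induces a token-permutation of sign $(-1)^{k}$; since every closed walk in a bipartite graph has even length, $\mathrm{puz}(Y) \le \mathfrak{A}_{n-1}$ when $Y$ is bipartite, whereas an odd cycle of $Y$ supplies, via (a) with $\ell$ odd, an element of $\mathrm{puz}(Y)$ lying outside $\mathfrak{A}_{n-1}$.

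The core is to prove $\mathfrak{A}_{n-1} \le \mathrm{puz}(Y)$ for every $2$-connected $Y$ that is neither a cycle nor $\theta_0$. Since such a $Y$ has an open ear decomposition with at least one ear (Proposition~\ref{bicon}), it contains a $\theta$-subgraph: two branch vertices joined by three internally disjoint paths of lengths $a \le b \le c$. The argument then has two ingredients. First, $\mathrm{puz}(Y)$ is a \emph{primitive} subgroup of $\mathfrak{S}_{n-1}$: transitivity comes from the flexibility of the puzzle on a $2$-connected graph --- a cycle through any two given vertices, plus $2$-connectedness to route the blank onto it and back home, lets one carry any token to any vertex --- and a nontrivial block system is excluded because two internally disjoint paths between a suitable pair of vertices allow a single token to be moved across any putative block boundary. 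Second, $\mathrm{puz}(Y)$ contains a $3$-cycle, or a $p$-cycle of prime length $p \le n-4$, obtained by combining a bounded number of rotations around the cycles of lengths $a+b$, $a+c$, $b+c$ of the $\theta$-subgraph with their conjugates and commutators (the commutators having support concentrated near a shared path); a short case analysis on $(a,b,c)$ furnishes such an element in every case --- \emph{and this step is exactly what fails for $\theta_0$}, the sole configuration where no such combination yields a $3$-cycle or a suitably short prime cycle. Granted primitivity and such a cycle, Jordan's theorem --- a primitive subgroup of $\mathfrak{S}_m$ that contains a $3$-cycle, or a prime $p$-cycle with $p \le m-3$, contains $\mathfrak{A}_m$ --- yields $\mathfrak{A}_{n-1} \le \mathrm{puz}(Y)$ (small $n$ being a finite direct check). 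Combining with (b): for bipartite $Y$ we get $\mathfrak{A}_{n-1} \le \mathrm{puz}(Y) \le \mathfrak{A}_{n-1}$, so $\mathrm{puz}(Y) = \mathfrak{A}_{n-1}$ and $\FS(\Star_n, Y)$ has exactly two components, each of size $n!/2$; for non-bipartite $Y$ the odd-cycle element of (b) forces $\mathrm{puz}(Y) = \mathfrak{S}_{n-1}$, so $\FS(\Star_n, Y)$ is connected.

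Finally, the two excluded graphs are handled on their own. For $\Cycle_n$ there is only one cycle to rotate around, so $\mathrm{puz}(\Cycle_n)$ is cyclic of order $n-1$, in agreement with Theorem~\ref{orig_result}. For $\theta_0$ the primitivity/commutator machinery above breaks down, and one instead exhibits explicit generators of $\mathrm{puz}(\theta_0)$ and identifies the resulting group, verifying that it has order $120$; hence $\FS(\Star_7, \theta_0)$ has $6!/120 = 6$ components. I expect establishing primitivity uniformly over all admissible $Y$ --- and, hand in hand, isolating exactly why $\theta_0$ is the unique sporadic failure of the short-cycle step --- to be the main obstacle; by contrast, producing the requisite short cycles is a finite combinatorial game, Jordan's theorem is off the shelf, and the parity bookkeeping is routine.
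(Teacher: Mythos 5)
The paper does not prove this proposition: it is imported verbatim from \cite{wilson1974graph} and used as a black box, so there is no internal argument to compare yours against. Judged on its own terms, your sketch is a faithful outline of the standard proof of Wilson's theorem. The reduction to the puzzle group $\mathrm{puz}(Y)\le\mathfrak S_{n-1}$, the component count $(n-1)!/|\mathrm{puz}(Y)|$ with each component of size $n\cdot|\mathrm{puz}(Y)|$, the parity bookkeeping (bipartite implies $\mathrm{puz}(Y)\le\mathfrak A_{n-1}$, while an odd cycle supplies an odd element), and the endgame via Jordan's theorem are all correct and consistent with the numerology in the statement ($n!/2$ per component for bipartite $Y$; $6!/120=6$ components for $\theta_0$).

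That said, as a proof it is a plan rather than an argument: the two steps you yourself flag as the main obstacles --- establishing primitivity of $\mathrm{puz}(Y)$ uniformly over all admissible $2$-connected $Y$, and manufacturing a $3$-cycle (or a prime cycle of length at most $n-4$) from rotations around the three cycles of a $\theta$-subgraph while isolating $\theta_0$ as the unique failure --- are precisely where all the work in Wilson's paper lives, and they are only gestured at here. In particular, ``two internally disjoint paths allow a single token to be moved across any putative block boundary'' is not yet an argument, and the claim that a bounded combination of rotations, conjugates, and commutators always yields the required short cycle except for $\theta_0$ requires the full case analysis on the path lengths $(a,b,c)$ to be believed. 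If you intend this as a citation-level justification (which is exactly how the paper treats the result), it is fine; if you intend it as a self-contained proof, these gaps must be filled.
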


\noindent The following corollary demonstrates a sense in which the preceding proposition is sharp. 
\begin{corollary}[\cite{defant2020friends}] \label{strong_wilson} Let $X$ be a graph on $n \geq 3$ vertices that contains $\Star_n$ as a proper subgraph, and $Y$ be a biconnected graph on $n$ vertices that is not isomorphic to $\Cycle_n$ or $\theta_0$. Then $\FS(X, Y)$ is connected.
\end{corollary}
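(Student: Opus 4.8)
The plan is to combine the preceding proposition of Wilson with parts (2) and (4) of Proposition~\ref{basic_props}. Fix $V(X)=V(Y)=[n]$, let $c$ be a vertex of $X$ adjacent to all other vertices, and identify $\Star_n$ with the spanning star of $X$ centered at $c$. Since $\Star_n$ is then a subgraph of $X$, Proposition~\ref{basic_props}(4) gives that $\FS(\Star_n,Y)$ is a \emph{spanning} subgraph of $\FS(X,Y)$, so every component of $\FS(X,Y)$ is a union of components of $\FS(\Star_n,Y)$; thus it suffices to show these components get merged. If $Y$ is not bipartite, the preceding proposition already gives that $\FS(\Star_n,Y)$ is connected (here we use that $Y$ is biconnected and $Y\not\cong\Cycle_n,\theta_0$), so $\FS(X,Y)$ is connected. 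So assume $Y$ is bipartite; then $\FS(\Star_n,Y)$ has exactly two components $C_1,C_2$, each of size $n!/2$, and it is enough to produce a single edge of $\FS(X,Y)$ with one endpoint in $C_1$ and the other in $C_2$.

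Using that $\Star_n$ is a \emph{proper} subgraph of $X$, choose an edge $\{a,b\}\in E(X)$ with $a,b\neq c$ (every edge of $X$ incident to $c$ already lies in the star). I would first observe that right multiplication by any permutation fixing $c$ is a graph automorphism of $\FS(\Star_n,Y)$: if $\pi$ fixes $c$, then $\sigma\mapsto\sigma\circ\pi$ carries the edge $\{\sigma,\sigma\circ(i\,c)\}$ to $\{\sigma\circ\pi,\ \sigma\circ\pi\circ(\pi^{-1}(i)\,c)\}$, and the two defining conditions are preserved because $c$ is the star's center (so $\{\pi^{-1}(i),c\}\in E(\Star_n)$) and $\{(\sigma\circ\pi)(\pi^{-1}(i)),(\sigma\circ\pi)(c)\}=\{\sigma(i),\sigma(c)\}$. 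This yields an action of the stabilizer of $c$ in $\mathfrak S_n$, which is isomorphic to $\mathfrak S_{n-1}$, on the two-element set $\{C_1,C_2\}$.

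The crux is that this action is nontrivial, equivalently that right multiplication by $(a\,b)$ interchanges $C_1$ and $C_2$. If the action were trivial, each $C_i$ would be invariant under right multiplication by the entire stabilizer of $c$, hence a union of fibers of the map $\mathfrak S_n\to V(Y)$, $\sigma\mapsto\sigma(c)$. But such a fiber is exactly the set of configurations with the blank token $c$ at a prescribed vertex of $Y$, and since $Y$ is connected, each $C_i$ contains configurations with the blank at every vertex of $Y$; so $C_i$ would have to be all of $\mathfrak S_n$, contradicting the existence of two nonempty components. Since a nontrivial action of $\mathfrak S_{n-1}$ (with $n-1\geq 2$) on a two-element set is the sign character, every transposition — in particular $(a\,b)$ — swaps $C_1$ and $C_2$. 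Finally, as $Y$ is biconnected with $n\geq 3$ it has an edge $\{p,q\}$, and any bijection $\sigma$ with $\sigma(a)=p$ and $\sigma(b)=q$ yields an edge $\{\sigma,\ \sigma\circ(a\,b)\}$ of $\FS(X,Y)$ (via the edge $\{a,b\}$ of $X$) whose endpoints lie in $C_1$ and $C_2$. Hence $\FS(X,Y)$ is connected.

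I expect the automorphism check and this fiber argument to be the only real content, with the rest being bookkeeping. One could instead invoke the classical $15$-puzzle parity invariant — slide the blank to a fixed vertex of $Y$ and take the sign of the induced permutation of the remaining $n-1$ tokens, which is well-defined precisely because $Y$ is bipartite, constant on components, and reversed by transposing two non-blank tokens — to conclude directly that $\sigma$ and $\sigma\circ(a\,b)$ lie in different components.
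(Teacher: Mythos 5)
Your argument is correct. Note that the paper itself states this corollary as a citation to \cite{defant2020friends} and supplies no proof, so there is nothing internal to compare against; your proof is a complete, self-contained justification and follows the standard route: reduce to Wilson's theorem via Proposition \ref{basic_props}(4), dispose of the non-bipartite case immediately, and in the bipartite case use the extra edge $\{a,b\}$ of $X$ (which must avoid the star's center $c$) to join the two components of $\FS(\Star_n,Y)$. The one genuinely nice touch is how you establish that $\sigma$ and $\sigma\circ(a\,b)$ lie in different components: rather than constructing the $15$-puzzle parity invariant explicitly (the usual argument, which you sketch at the end), you observe that right multiplication by the stabilizer of $c$ acts on the two components, rule out the trivial action by showing each component meets every fiber of $\sigma\mapsto\sigma(c)$ (using connectivity of $Y$), and conclude the action must factor through the sign character of $\mathfrak S_{n-1}$. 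All the supporting checks go through: the conjugation identity $\sigma\circ(i\,c)\circ\pi=(\sigma\circ\pi)\circ(\pi^{-1}(i)\,c)$ uses $\pi(c)=c$ correctly, and the existence of an edge of $Y$ to seed the final swap is immediate from biconnectivity with $n\geq 3$.
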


\noindent We can now prove the following special case of Theorem \ref{conjec_7.1}, in which $\overline{Y}$ has an isolated vertex.
\begin{proposition} \label{isol_vx}
Let $Y$ be a graph on $n \geq 3$ vertices such that $\overline{Y}$ is a forest with an isolated vertex (i.e. a vertex of degree zero). If $X$ is a biconnected graph on $n$ vertices, then $\FS(X,Y)$ is connected.
\end{proposition}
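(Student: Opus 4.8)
The plan is to exploit the structure that an isolated vertex of $\overline Y$ imposes on $Y$: if $v \in V(\overline Y)$ is isolated in $\overline Y$, then $v$ is adjacent in $Y$ to every other vertex, so $\Star_n \subseteq Y$ with $v$ playing the role of the center of the star. One first records the easy observation that $Y = \Star_n$ can occur only when $n = 3$, since $\overline{\Star_n} = K_1 \oplus K_{n-1}$ contains a triangle once $n \geq 4$ and hence is not a forest; thus for $n \geq 4$ the inclusion $\Star_n \subseteq Y$ is proper. With this reformulation in hand I would split on the isomorphism type of the biconnected graph $X$.

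Case $X \cong \Cycle_n$: this is immediate from Theorem \ref{orig_result}, since $\overline Y$ is a forest and the isolated vertex of $\overline Y$ is a tree of size $1$, so the trees of $\overline Y$ have jointly coprime sizes. Case $X$ biconnected with $X \not\cong \Cycle_n$ and $X \not\cong \theta_0$: the only biconnected graph on $3$ vertices is $\Cycle_3$, so here $n \geq 4$ and therefore $\Star_n \subsetneq Y$; now Corollary \ref{strong_wilson}, applied with the roles of the two graphs interchanged (legitimate by Proposition \ref{basic_props}(1), as $Y$ contains $\Star_n$ as a proper subgraph while $X$ is biconnected and distinct from $\Cycle_n$ and $\theta_0$), yields that $\FS(X, Y) \cong \FS(Y, X)$ is connected.

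The remaining case $X \cong \theta_0$ (so $n = 7$) is the crux, and I expect it to be the main obstacle: $\FS(\theta_0,\,\cdot\,)$ is covered neither by Corollary \ref{strong_wilson} nor by the cited theorem of Wilson (both exclude $\theta_0$), and since $\theta_0$ contains neither $\Star_7$ nor $\Cycle_7$ and is its own unique spanning biconnected subgraph, there is no cheap reduction to another graph in the first argument either. My approach here would be to treat $v$ as a freely roaming ``blank'': since every vertex of $\theta_0$ has a neighbor, $v$ can be slid along any edge, so $\FS(\theta_0, \Star_7)$ — which has exactly $6$ connected components — embeds as a spanning subgraph of $\FS(\theta_0, Y)$. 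Because $\Star_7 \subsetneq Y$, there is an edge $\{a, b\} \in E(Y)$ with $a, b \neq v$; using moves of $\FS(\theta_0, \Star_7)$ one brings the tokens $a$ and $b$ onto the two endpoints of an edge of $\theta_0$ not occupied by $v$, and the resulting $(X,Y)$-friendly swap realizes a transposition of the six non-blank tokens while fixing the blank. The key input, from Wilson's analysis, is that the group of permutations of the six non-blank tokens achievable with the blank held fixed is a transitive copy of $\mathfrak S_5$ inside $\mathfrak S_6$, a maximal subgroup which (via the exceptional outer automorphism of $\mathfrak S_6$) contains no transposition. Hence adjoining any single transposition generates all of $\mathfrak S_6$, so $\FS(\theta_0, Y)$ restricted to configurations with the blank at a fixed vertex is connected, and letting the blank move freely gives connectivity of $\FS(\theta_0, Y)$ itself.

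In short, the two non-exceptional cases reduce cleanly to the cited results once the ``universal vertex'' reformulation is in place, and essentially all of the work is concentrated in checking that, for every admissible $Y$, the edges of $Y$ beyond $\Star_7$ suffice to amalgamate the six components of $\FS(\theta_0, \Star_7)$ — which the group-theoretic fact above accomplishes, but which could alternatively be verified by a direct case analysis over the finitely many forests on seven vertices with an isolated vertex.
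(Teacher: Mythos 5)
Your proof is correct, and for the non-exceptional cases it coincides with the paper's: $X \cong \Cycle_n$ is dispatched by Theorem \ref{orig_result} (the isolated vertex of $\overline Y$ is a tree of size $1$, forcing the gcd condition), and every other biconnected $X \not\cong \theta_0$ is dispatched by Corollary \ref{strong_wilson} after observing that $Y$ properly contains $\Star_n$ once $n \geq 4$; your handling of $n=3$ is in fact marginally cleaner, since $K_3 = \Cycle_3$ already falls under the cycle case, whereas the paper treats it as a separate direct check. The genuine divergence is the case $X \cong \theta_0$. The paper settles it by citing a computer verification from \cite{defant2020friends} that $\FS(\theta_0, Y')$ is connected whenever $\overline{Y'}$ is the disjoint union of an isolated vertex and a $6$-vertex tree, and then descending to all admissible $Y$ via Proposition \ref{subgraph_lemma}. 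You instead argue conceptually: the puzzle group of the ``tricky six'' puzzle --- the permutations of the six non-central tokens achievable while returning the center of $\Star_7$ to a fixed vertex of $\theta_0$ --- is the transitive copy of $\mathfrak S_5 \cong \mathrm{PGL}(2,5)$ inside $\mathfrak S_6$, a maximal subgroup containing no transposition (a nonidentity element of $\mathrm{PGL}(2,5)$ fixes at most two points of the projective line, while a transposition fixes four), so adjoining the transposition supplied by any edge of $Y$ avoiding $v$ --- such an edge exists exactly because $\overline{\Star_7}$ contains a triangle and hence is not a forest --- generates all of $\mathfrak S_6$, and free mobility of the blank then glues everything together. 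This is a legitimate and more illuminating replacement for the computer check. Its cost is that it imports from \cite{wilson1974graph} the identification of the puzzle group itself, which is strictly more than the component count the present paper quotes; if you want to stay within what is quoted, note that the count $6$ forces the group to have order $120$, and transitivity (the blank circulating around either cycle of $\theta_0$ through its base vertex moves every token) then pins it down to the transitive $\mathfrak S_5$. You should also record explicitly that the extra swap is realizable from every component, i.e.\ that $a$ and $b$ can be brought onto adjacent vertices of $\theta_0$ with the blank parked elsewhere; this again follows from transitivity of the puzzle group.
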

\begin{proof}
The proposition follows directly from Theorem \ref{orig_result} in the case that $X = \Cycle_n$, since we have that $\gcd(|V(\mathcal T_1)|, \dots, |V(\mathcal T_r)|) = 1$ necessarily follows from the isolated vertex in $V(\overline{Y})$ (henceforth denoted $v$).

We now consider all other biconnected graphs $X$. First, assume that $X$ is not isomorphic to $\theta_0$. Here, the vertex $v$ yields that $Y$ has a subgraph isomorphic to $\Star_n$ (with center $v$), which is necessarily proper in $Y$ for all $n \geq 4$, for which the result follows immediately from Corollary \ref{strong_wilson}. If $n = 3$, then we must have $X = K_3$, and it is easy to directly confirm that $\FS(X, Y)$ is connected.

Now consider $X$ isomorphic to $\theta_0$ (for $n = 7$). The article \cite{defant2020friends} confirms by a computer check that $\FS(X, Y)$ is connected for any graph $Y$ such that $\overline{Y}$ consists of the disjoint union of an isolated vertex and a tree with $6$ vertices. Any $\overline{Y}$ satisfying the constraints in the proposition is a subgraph of such a graph (call it $\overline{Y'}$), so that $Y'$ is a subgraph of $Y$. Since $\FS(\theta_0, Y')$ is connected and a subgraph of $\FS(\theta_0, Y)$ by Proposition \ref{subgraph_lemma} with the same vertex set, it follows that $\FS(X, Y)$ is connected.
\end{proof}
\noindent Henceforth, we shall concern ourselves strictly with the setting in which $\overline{Y}$ has no isolated vertex. In this case, it follows quickly that $\overline{Y}$ must have at least four leaves, since the graph $\overline{Y}$ must consist of at least two trees, both of which necessarily have at least two vertices (and thus at least two leaves).

\subsection{Biconnected Graphs with One Ear} \label{for_r=1}
We begin by extending Theorem \ref{orig_result}
for all biconnected graphs that can be decomposed into an open ear decomposition with at most one ear (i.e. $r \leq 1$ in Definition \ref{open_ear_decomp}). We concern ourselves with the $r=1$ case ($r=0$ is the statement of Theorem \ref{orig_result}). The result is immediate for $n=3$ (any biconnected graph is necessarily $K_3$) and for $n=4$ (any biconnected graph with at most one ear is Hamiltonian), so take $n \geq 5$.

Let $[P_0, \ P_1]$ denote the open ear decomposition of some biconnected $G$ with one ear. Denote the vertices in the simple path $P_1$ by $\{v_0, v_1, \dots, v_m\}$ (as ordered in the path). Let $\{v_0', \dots, v'_k\}$ with $v_0' = v_0$, $v_k' = v_m$ be the shorter path from $v_0$ to $v_m$ in $P_0$, and $\{w_0, \dots, w_\ell\}$ with $w_0 = v_m$, $w_\ell = v_0$ the longer. We can assume $k, m \geq 2$ (i.e., there exist inner vertices in $P_1$ and the shorter path in $P_0$), as the connectedness of $\FS(X, Y)$ is immediate from $G$ being Hamiltonian if these bounds are not satisfied. We must have the strict inequality $k + m < n$, since $k < |E(P_0)| - 1$, $m = |E(P_1)|$, and by Proposition $\ref{ear_eq}$, $|E(P_0)| + |E(P_1)| = |E(X)| = n+1$. We can also assume, without loss of generality, that $k, m \leq \lfloor \frac{n}{2} \rfloor$. This is obvious for $k$ (the length of the shorter of two paths between the same vertices in $P_0$, and $|E(P_0)| < n$). If this were not true for $m$ (i.e. $m > \lfloor \frac{n}{2} \rfloor$), construct a different open ear decomposition $[P_0', P_1']$ of $G$ that includes the vertices in $P_1$ in the new initial cycle $P_0'$; by Proposition \ref{ear_eq}, $|E(P_0')| + |E(P_1')| = |E(X)| = n+1$ and $|E(P_0')| > \lfloor \frac{n}{2} \rfloor + 1$, so that the length of $P_1'$, $|E(P_1')|$, has the desired upper bound. 

With these reductions, construct the graph $X$ from $\Cycle_n$ by removing the edge $\{1, n\}$ from $E(\Cycle_n)$ and adding the edges $\{n, k\}$ and $\{1, n-m+1\}$, where $2 \leq k \leq \lfloor \frac{n}{2} \rfloor < n-m+1 \leq n-1$ and $k + m < n$. Note that $(n-m+1) - k = n - (k+m) + 1 \geq 2$, or there must be at least one vertex strictly between the vertices $k$ and $n-m+1$ in $\Cycle_n$. We now have the following observation.
\begin{proposition} \label{ear_fold}
Any non-Hamiltonian biconnected graph $G$ on $n \geq 5$ vertices with one ear is isomorphic to a graph $X$ with $V(X) = [n]$ and $E(X) = \{\{i, i+1\}: 1 \leq i \leq n-1\} \cup \{1,w\} \cup \{n, v\}$, with the inequalities $2 \leq v \leq \lfloor \frac{n}{2} \rfloor < w \leq n-1$ and $v + [(n+1)-w] < n$.
\end{proposition}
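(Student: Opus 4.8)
The plan is to write down an explicit isomorphism from $G$ onto the graph $X$ constructed in the paragraph just above, reading off everything from the open ear decomposition already in hand. Recall from that paragraph that $G$ carries an open ear decomposition $[P_0,P_1]$ whose ear $P_1=:D$ has length $m$ with feet $v_0,v_m$, and in which $v_0,v_m$ split the initial cycle $P_0$ into a shorter arc $A$ of length $k$ and a longer arc $B$ of length $\ell$; here $2\le k,m\le\lfloor n/2\rfloor$, $k+m<n$, and, by Proposition~\ref{ear_eq}, $k+\ell+m=|E(G)|=n+1$, so $\ell=n+1-k-m\ge 2$. Thus $G$ is the edge-disjoint union of three internally vertex-disjoint $v_0$--$v_m$ paths $A,B,D$ of lengths $k,\ell,m$, and it has $(k-1)+(\ell-1)+(m-1)+2=n$ vertices.

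On the other side, the graph $X=\Cycle_n-\{1,n\}+\{n,k\}+\{1,n-m+1\}$ is likewise the edge-disjoint union of three internally vertex-disjoint paths between its vertices $k$ and $n-m+1$: the spine segment $\alpha\colon k,k+1,\dots,n-m+1$ of length $\ell=n-m+1-k$; the segment $\beta\colon k,k-1,\dots,1,n-m+1$ of length $k$, whose final edge is the chord $\{1,n-m+1\}$; and the segment $\gamma\colon k,n,n-1,\dots,n-m+1$ of length $m$, whose first edge is the chord $\{n,k\}$. The hypothesis $k+m<n$ is exactly what forces $k<n-m+1$, which makes these three segments internally disjoint and $X$ simple. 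I would then define $\varphi\colon V(G)\to[n]$ by $\varphi(v_0)=k$, $\varphi(v_m)=n-m+1$, and by matching $A\mapsto\beta$, $B\mapsto\alpha$, $D\mapsto\gamma$ monotonically: walking out from $v_0$, the internal vertices of $A$ go in order to $k-1,k-2,\dots,1$; those of $B$ to $k+1,k+2,\dots,n-m$; and those of $D$ to $n,n-1,\dots,n-m+2$.

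Three verifications finish the argument. First, $\varphi$ is a bijection: the images of the internal vertices of $A,B,D$ are the pairwise disjoint blocks $\{1,\dots,k-1\}$, $\{k+1,\dots,n-m\}$, $\{n-m+2,\dots,n\}$, which together with $\{k,n-m+1\}$ partition $[n]$. Second, $\varphi$ sends edges to edges: every edge of $G$ lies on one of $A,B,D$, consecutive spine vertices map to consecutive spine vertices, the last edge of $A$ maps to the chord $\{1,n-m+1\}$ and the first edge of $D$ to the chord $\{n,k\}$; as $|E(G)|=|E(X)|=n+1$ and $\varphi$ is injective, $\varphi$ is a graph isomorphism. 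Third, the stated inequalities are just a rewriting: $2\le k\le\lfloor n/2\rfloor$ is immediate, $2\le m\le\lfloor n/2\rfloor$ rearranges to $\lfloor n/2\rfloor<w:=n-m+1\le n-1$, and $k+m<n$ is precisely $v+[(n+1)-w]<n$ with $v:=k$. I expect the only delicate point to be the index bookkeeping in the second verification --- checking that the three monotone labelings are simultaneously consistent with all incidences of $X$ and that no off-by-one slips in at the feet $v_0,v_m$ --- but this is forced once one observes that the path lengths $k,\ell,m$ of the two decompositions agree. (One can also package the whole thing more abstractly: a biconnected graph with a single ear is determined up to isomorphism by the multiset of the three path lengths joining its two branch vertices, and for both $G$ and $X$ that multiset is $\{k,\ell,m\}$.)
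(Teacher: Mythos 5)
Your proposal is correct and is essentially the paper's own argument: the paper likewise exhibits an explicit isomorphism (written as a map $[n]\to V(G)$, i.e.\ the inverse of yours, with the roles of the two branch vertices swapped) that matches the three internally disjoint $v_0$--$v_m$ paths of lengths $k,\ell,m$ in $G$ with the corresponding three segments of $X$. The only difference is that the paper states the map and asserts the verification, whereas you carry out the bookkeeping; no new ideas are needed.
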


\begin{proof}
Construct $X$ from $G$ as in the preceding discussion: here, we have the correspondences $v = k$ and $w = n-m+1$, and $k + m + \ell = n + 1$ (by Proposition \ref{ear_eq}). Define the mapping $\varphi: [n] = V(X) \to V(G)$ by
\begin{align*}
    \varphi(v) = \begin{cases}
    v'_i & v = i, \ \ 1 \leq i \leq k \\
    w_i & v = k + i, \ \ 1 \leq i \leq \ell - 1 \\
    v_{m-i} & v = n-i+1, \ \ m \geq i \geq 1
    \end{cases}
\end{align*}
One can confirm that $\varphi$ is indeed a graph isomorphism, so $X \cong G$.
\end{proof}

\begin{figure}[ht]
    \centering
    \includegraphics[width=0.5\textwidth]{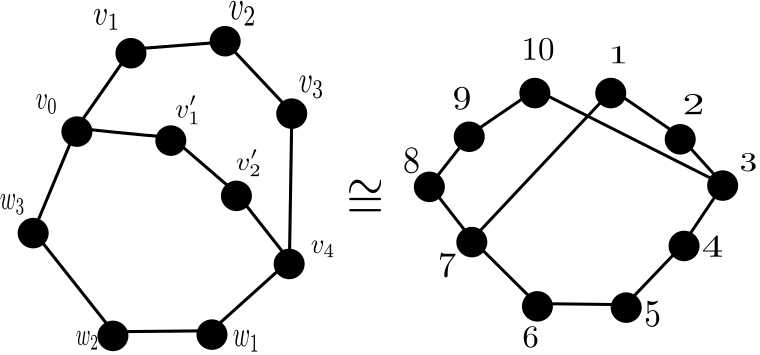}
    \caption{An illustration of the isomorphism described in Proposition \ref{ear_fold}.}
    \label{fig:r=1_decomp}
\end{figure}

\subsubsection{Structure of $X$ + Proof Overview}

Take a graph on $n \geq 5$ vertices as detailed in Proposition \ref{ear_fold}; we establish some notation for what follows. Partition $V(X) = [n]$ into three sets: $\mathcal B_1 = \{w, w+1, \dots, n\}, \mathcal B_2 = \{1, 2, \dots, v\}$, and $\mathcal B = \{v+1, v+2, \dots, w-1\}$. From the bounds on $v$ and $w$ above, $\mathcal B$ is nonempty, since $|\mathcal B| = w - v - 1 \geq 1$. There exist three subgraphs in $X$ that are isomorphic to a cycle graph. The cycle $\mathcal C$ goes across the path corresponding to $\mathcal B_1$, edge $\{1, w\}$, the path corresponding to $\mathcal B_2$, and edge $\{n, v\}$ (this is indeed a valid cycle graph due to the inequalities on $v$ and $w$ established above). The cycle $\mathcal C_1$ goes across $\{1, w\}$ and loops around $\mathcal B$ and $\mathcal B_2$. The cycle $\mathcal C_2$ goes across $\{n, v\}$ and loops around $\mathcal B$ and $\mathcal B_1$.

\begin{figure}[ht]
    \centering
    \includegraphics[width=0.3\textwidth]{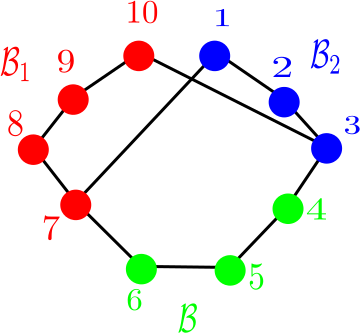}
    \caption{An illustration of the notation introduced for the example given in Figure \ref{fig:r=1_decomp}.}
    \label{fig:graph_labels}
\end{figure}

We motivate the core of the proof of Lemma \ref{r=1}. By Theorem \ref{orig_result}, $\FS(\Cycle_n, Y)$ is connected whenever $\overline{Y}$ is a forest with an isolated vertex. In particular, for some $\sigma: V(X) \to V(Y)$, consider the cycle subgraph $\mathcal C$ of $X$ and the subgraph $Y |_{\sigma(V(\mathcal C))}$ of $Y$ induced by all vertices on the positions of $\mathcal C$, or $\sigma(V(\mathcal C)) = \{\sigma(v) : v \in V(\mathcal C)\}$ in $Y$. Consider the subgraph $\FS(\mathcal C, Y |_{\sigma(V(\mathcal C))})$ of $\FS(X, Y)$. (More specifically, we refer to the subgraph in $\FS(X, Y)$ isomorphic to $\FS(\mathcal C, Y|_{\sigma(V(\mathcal C))})$ with all mappings on $\mathcal B$ consistent with $\sigma$; this clarification will be omitted in future references to this subgraph.) Since $\overline{Y}$ is a forest, the subgraph $\overline{Y |_{\sigma(V(\mathcal C))}}$ is also a forest. Thus, if there is an isolated vertex in $\overline{Y |_{\sigma(V(\mathcal C))}}$, then we can achieve any bijection of the values in $\sigma(V(\mathcal C))$ on the positions of $V(\mathcal C)$ via $(\mathcal C,Y |_{\sigma(V(\mathcal C))})$-friendly swaps by Theorem \ref{orig_result}.

Say we begin with some permutation $\sigma_0: V(X) \to V(Y)$. We perform a sequence of $(X,Y)$-friendly swaps yielding a permutation $\sigma: V(X) \to V(Y)$ such that there exists an isolated vertex in $\overline{Y |_{\sigma(V(\mathcal C))}}$, swap the positions of $\sigma(1)$ and $\sigma(n)$ by appealing to the preceding result, and return to the original configuration $\sigma_0$ on all vertices of $X$ excluding $1$ and $n$ (which have been interchanged). Specifically, for some $\ell, \ell' \in V(X)$, say that we have $\sigma(\ell)$ a leaf vertex in $\overline{Y}$, and $\sigma(\ell')$ the unique vertex in $\overline{Y}$ that $\sigma(\ell)$ is adjacent to. If $\ell \in V(\mathcal C)$ and $\ell' \notin V(\mathcal C)$, we can refer to Theorem \ref{orig_result} on the subgraph $\FS(\mathcal C, Y |_{\sigma(V(\mathcal C))})$ of $\FS(X, Y)$ to perform the desired swap. 

\subsubsection{$r=1$}
We prove Theorem \ref{conjec_7.1} for the case in which $r=1$. We begin with the following preliminary results.

\begin{remark} \label{trivial_leaf_prop}
Any tree $\mathcal T$ with $n \geq 2$ vertices has at least two leaves (vertices of degree $1$).
\end{remark}

\begin{proposition} \label{leaf_prop}
Any tree $\mathcal T$ with $n$ vertices such that all its leaves are adjacent to the same vertex is isomorphic to $\Star_n$.
\end{proposition}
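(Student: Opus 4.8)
The plan is to argue by contradiction. The cases $n \le 2$ are immediate (a one- or two-vertex tree either fails the hypothesis or is already a star), so assume $n \ge 3$ and suppose $\mathcal T \not\cong \Star_n$. Let $v$ be a vertex to which every leaf of $\mathcal T$ is adjacent. Since $n \ge 3$, no leaf can be adjacent to another leaf — such an edge would be an entire connected component of size $2$ — so every leaf has a non-leaf neighbour, and by the hypothesis that neighbour must be $v$; in particular $v$ is not a leaf. Also, a tree on $n \ge 3$ vertices has degree sum $2(n-1) < 2n$, so it cannot have every vertex of degree $1$, and hence the set $N$ of non-leaf vertices is nonempty. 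The goal is thereby reduced to showing $|N| = 1$: for then every vertex other than the unique element of $N$ is a leaf adjacent to it, which gives $\mathcal T \cong \Star_n$, a contradiction.

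To prove $|N| = 1$, I would pass to the induced subgraph $\mathcal T|_N$. It is connected — the unique $\mathcal T$-path between two non-leaves cannot have a leaf as an internal vertex, since an internal vertex of a path has degree at least $2$ — and being a subgraph of a tree it is acyclic, so $\mathcal T|_N$ is itself a tree. If $|N| \ge 2$, then by Remark \ref{trivial_leaf_prop} it has at least two leaves, so I may choose a leaf $u$ of $\mathcal T|_N$ with $u \ne v$. Then $u$ has exactly one neighbour lying in $N$; but $u \in N$ means $\deg_{\mathcal T}(u) \ge 2$, so $u$ has a neighbour $w \notin N$, that is, a leaf $w$. Since $w$ is a leaf its only neighbour is $u$, while the hypothesis forces $w$ to be adjacent to $v$; hence $u = v$, contradicting $u \ne v$. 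Therefore $|N| = 1$, completing the argument.

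A slightly different route would avoid the pruning step and use the diameter: a tree of diameter at most $2$ is a star (the midpoint of a length-$2$ diametral path must be adjacent to every vertex, as otherwise some vertex would sit at distance $\ge 3$ from an endpoint), so a non-star tree contains a diametral path $u_0 u_1 \cdots u_d$ with $d \ge 3$; the endpoints of a longest path in a tree are always leaves, and $u_0, u_d$ have distinct neighbours $u_1 \ne u_{d-1}$, directly contradicting the hypothesis. Either way the argument is entirely elementary, and I do not anticipate a genuine obstacle; the only care needed is in disposing of the degenerate small cases and in invoking the two standard facts about trees (that deleting all leaves of a tree leaves a tree, equivalently that the endpoints of a longest path are leaves), on which the whole proof rests.
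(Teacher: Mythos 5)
Your main argument is correct and is essentially the paper's argument: both proofs apply Remark \ref{trivial_leaf_prop} to an auxiliary subtree in order to produce a leaf of $\mathcal T$ whose unique neighbour is not the common vertex, yielding a contradiction. The only difference is bookkeeping — you pass to the induced subtree on the non-leaf vertices and take one of its leaves $u \neq v$, whereas the paper deletes an edge at the central vertex and examines the component on the far side — and both routes hinge on the same two facts (a tree on at least two vertices has at least two leaves, and a pendant leaf of $\mathcal T$ hanging off such a $u$ cannot be adjacent to $v$). Your alternative via the diameter is a genuinely different and arguably cleaner route: a non-star tree has diameter at least $3$, and the two endpoints of a diametral path are leaves with distinct neighbours, which contradicts the hypothesis immediately; it avoids the pruning step entirely at the cost of invoking the standard facts that endpoints of longest paths in a tree are leaves and that trees of diameter at most $2$ are stars.
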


\begin{proof}
We can assume $n \geq 5$, as the statement is trivial for $n \leq 4$. The leaves of $\mathcal T$ and $\ell'$ yield a subgraph of $\mathcal T$ isomorphic to a star graph $\mathcal S$ with central vertex $\ell'$, which we shall assume is not all of $\mathcal T$. Since $\mathcal T$ is connected, there exists $v \in V(\mathcal T)$ adjacent to $\ell'$ and not a leaf: take $\mathcal T'$ to be $\mathcal T$ minus the edge $\{v, \ell'\}$, and $C$ the component of $\mathcal T'$ containing $v$, so $C$ is itself a tree with at least two vertices, and thus with at least two leaves by Proposition \ref{trivial_leaf_prop}. The set $V(C)$ cannot have any vertices of $\mathcal S$ (if it did, there would exist a cycle in $\mathcal T$), so all leaves of $C$ are not adjacent to $\ell'$. Upon adding the edge $\{v, \ell'\}$ to $\mathcal T'$ to achieve $\mathcal T$, at least one of the leaves of $C$ remains a leaf in $\mathcal T$ not adjacent to $\ell'$, a contradiction.
\end{proof}

\noindent We now prove the main lemma that we invoke for deriving Theorem \ref{conjec_7.1}.
\begin{lemma} \label{r=1}
Let $X$ be a graph with vertex set $V(X) = [n]$ for $n \geq 5$ and edge set $E(X) = \{\{i, i+1\}: 1 \leq i \leq n-1\} \cup \{1, w\} \cup \{n, v\}$, satisfying $2 \leq v \leq \lfloor \frac{n}{2} \rfloor < w \leq n-1$ and $v+[(n+1)-w] < n$. Let $Y$ be a graph on $n$ vertices such that $\overline{Y}$ is a forest with at least two trees. Take permutations $\sigma, \sigma' \in \mathfrak S_n$ such that $\sigma(1) = \sigma'(n)$, $\sigma(n) = \sigma'(1)$, $\sigma(i) = \sigma'(i)$ for all $i \in [n] \setminus \{1, n\}$, and $\{\sigma(1), \sigma(n)\} \in E(Y)$. Then there exists a sequence of $(X,Y)$-friendly swaps from $\sigma$ to $\sigma'$.
\end{lemma}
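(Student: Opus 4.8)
The plan is to realize the swap of $\sigma(1)$ and $\sigma(n)$ entirely inside one of the three cycle subgraphs $\mathcal{C}, \mathcal{C}_1, \mathcal{C}_2$ of $X$, using Theorem~\ref{orig_result} applied to the corresponding $\FS$-subgraph of $\FS(X,Y)$. The key point is that Theorem~\ref{orig_result} lets us freely permute any set of labels occupying the vertices of a cycle subgraph $\mathcal{C}'$ of $X$, \emph{provided} the induced forest $\overline{Y|_{\sigma(V(\mathcal{C}'))}}$ has an isolated vertex — equivalently, some vertex of $\overline{Y}$ lying in $\sigma(V(\mathcal{C}'))$ has all its $\overline{Y}$-neighbors outside $\sigma(V(\mathcal{C}'))$. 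So the strategy has two phases: (i) massage $\sigma$ by $(X,Y)$-friendly swaps into a configuration $\tau$ in which both positions $1$ and $n$ lie on some common cycle subgraph, and the labels on that cycle's positions induce a sub-forest of $\overline{Y}$ with an isolated vertex; (ii) invoke Theorem~\ref{orig_result} on that cycle to transpose the labels on positions $1$ and $n$ while fixing everything else on the cycle; (iii) reverse phase~(i) to restore $\sigma$ elsewhere. Because positions $1$ and $n$ are the two endpoints fixed under the transposition and every other position is either on the cycle (restored by phase (ii)) or untouched, the net effect is exactly the swap required, and the composite sequence is a valid walk in $\FS(X,Y)$.

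\textbf{Executing phase (i)} is where the casework lives. Since $\overline{Y}$ is a forest with at least two trees and (by the running assumption) no isolated vertex, it has at least four leaves, and at least two distinct vertices $\sigma(\ell)$ that are leaves of $\overline{Y}$ whose unique $\overline{Y}$-neighbor is $\sigma(\ell')$. Note $1, n \in V(\mathcal{C})$ always, and $V(\mathcal{C}) = \mathcal{B}_1 \cup \mathcal{B}_2$ while $V(\mathcal{C}_1) = \mathcal{B} \cup \mathcal{B}_2 \cup \{w\}$ and $V(\mathcal{C}_2) = \mathcal{B} \cup \mathcal{B}_1 \cup \{v\}$; in particular $1 \in V(\mathcal{C}_1)$ and $n \in V(\mathcal{C}_2)$ but neither $\mathcal{C}_1$ nor $\mathcal{C}_2$ contains both $1$ and $n$. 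The cleanest target is $\mathcal{C}$: if we can arrange by friendly swaps that some vertex of $\overline{Y}$ occupying a position in $\mathcal{B}_1 \cup \mathcal{B}_2$ has its unique $\overline{Y}$-neighbor occupying a position in $\mathcal{B}$, then $\overline{Y|_{\tau(V(\mathcal{C}))}}$ has an isolated vertex and we win. To arrange this: pick a leaf $\sigma(\ell)$ of $\overline{Y}$ and its neighbor $\sigma(\ell')$; if $\ell$ already lies in $\mathcal{B}_1\cup\mathcal{B}_2$ and $\ell'$ lies in $\mathcal{B}$, done; otherwise slide labels along cycle edges of $X$ (within $\mathcal{C}_1$ or $\mathcal{C}_2$, wherever the labels currently sit, using the vertex of $\mathcal{B}$ as the "hole" à la the star/cycle arguments) to move $\sigma(\ell')$ into $\mathcal{B}$ and $\sigma(\ell)$ onto $\mathcal{C}$. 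The subtlety is that such sliding needs its own isolated-vertex guarantee in the relevant sub-cycle; here we exploit that $\overline{Y}$ has \emph{at least two} independent leaf/neighbor pairs, so while we are using one pair to reposition the other, the second pair furnishes the isolated vertex needed to run Theorem~\ref{orig_result} in $\mathcal{C}_1$ or $\mathcal{C}_2$. Proposition~\ref{leaf_prop} is presumably invoked to rule out the degenerate obstruction where a whole tree of $\overline{Y}$ is a star all of whose leaves are forced onto a single position-class, which would prevent separating a leaf from its neighbor.

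\textbf{The main obstacle} I anticipate is exactly this bookkeeping: ensuring that throughout phase~(i) — which may itself be a sequence of cycle-permutations in $\mathcal{C}_1$ and $\mathcal{C}_2$ — there is always a valid isolated vertex in the induced complement-forest of whichever cycle we are currently manipulating, and that the pairs of leaves we rely on are genuinely distinct and in "useful" position-classes. One must handle the boundary cases where $\mathcal{B}_1$ or $\mathcal{B}_2$ is a single vertex (i.e. $v = 2$ or $w = n-1$), where $\mathcal{B}$ is a single vertex, and where several leaves of $\overline{Y}$ happen to cluster in $\mathcal{B}$, separately but along the same lines. Once phase~(i) delivers a configuration with an isolated vertex of $\overline{Y|_{\tau(V(\mathcal{C}))}}$, phases~(ii) and~(iii) are immediate from Theorem~\ref{orig_result} and the fact that friendly-swap sequences are reversible, so the crux is entirely the "you can always reposition a leaf and its neighbor onto/into the right classes" claim.
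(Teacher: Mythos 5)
Your overall skeleton --- create an isolated vertex in the induced complement of a cycle subgraph of $X$, invoke Theorem~\ref{orig_result} there, and undo the preparatory swaps --- is exactly the paper's strategy, and your Case-1-type reasoning (when some leaf $\sigma(\ell)$ of $\overline{Y}$ has its neighbor $\sigma(\ell')$ positionable in $\mathcal B$) matches the paper's easy cases. But there is a genuine gap in the hard case, and it is precisely the case your plan cannot reach. You propose to always target the cycle $\mathcal C$ (positions $\mathcal B_1 \cup \mathcal B_2$), arranging a configuration $\tau$ with an isolated vertex in $\overline{Y|_{\tau(V(\mathcal C))}}$ \emph{while keeping $\sigma(1)$ and $\sigma(n)$ on positions $1$ and $n$} so that Theorem~\ref{orig_result} can then transpose them. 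Consider $\overline{Y}$ equal to the disjoint union of two stars whose centers are $\sigma(1)$ and $\sigma(n)$. Then every vertex of $\overline{Y}$ other than these two centers is a leaf adjacent to one of them; a leaf can only become isolated in $\overline{Y|_{\tau(V(\mathcal C))}}$ if its center is moved off $\mathcal C$ entirely (into $\mathcal B$), after which you can no longer swap that center with the other one inside $\mathcal C$; and a center can only become isolated if \emph{all} of its leaves are packed into $\mathcal B$, which is impossible whenever $|\mathcal B| = w - v - 1$ is smaller than the star (e.g.\ $|\mathcal B| = 1$). So the claim ``you can always reposition a leaf and its neighbor onto/into the right classes,'' which you correctly identify as the crux, is false as stated, and your appeal to a ``second independent leaf/neighbor pair'' does not rescue it, since in this configuration every pair has its neighbor at position $1$ or $n$.

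The paper resolves this case (its Case 2.2) by a different mechanism that your proposal does not contemplate: it works inside $\mathcal C_1$ (which contains position $1$ but not position $n$) and uses the chord edges of $X$ directly. A leaf of the star centered at $\sigma(n)$ \emph{is} isolated in $\overline{Y|_{\sigma(V(\mathcal C_1))}}$ precisely because $\sigma(n)$ sits at position $n \notin V(\mathcal C_1)$; one then rotates $\sigma(1)$ around $\mathcal C_1$ to position $v$, performs the single swap across the edge $\{n, v\}$, and runs a second pass through $\mathcal C_1$ (now using a leaf of the other star) to restore the displaced labels. Without some maneuver of this kind --- exploiting $\mathcal C_1$, $\mathcal C_2$, and the edges $\{1,w\}$, $\{n,v\}$ rather than a single application of Theorem~\ref{orig_result} to $\mathcal C$ --- the proof does not go through. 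Additional subcases (e.g.\ a large tree plus one star, governed by Proposition~\ref{leaf_prop} and the component-size threshold $\lfloor n/2 \rfloor$) need similar bespoke handling.
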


\begin{proof}
If $\overline{Y}$ has an isolated vertex, the result follows directly from Proposition \ref{isol_vx}. Henceforth assume that none of the components of $\overline{Y}$ are isolated vertices, implying that $\overline{Y}$ contains at least four leaves from its (at least) two trees of size at least $2$. Take one of these four leaves, denoted $\sigma(\ell) \in V(\overline{Y})$, and let $\sigma(\ell') \in V(\overline{Y})$ denote the value adjacent to $\sigma(\ell) \in \overline{Y}$, or the unique vertex $\sigma(\ell)$ fails to swap with. We break into cases depending on whether or not $\sigma(\ell)$ and $\sigma(\ell')$ lie in $\sigma(V(\mathcal C))$ or not.

In what follows, sequences of transpositions are understood to be friendly swaps executed from left to right, and values in any particular transposition are the vertices of $X$ corresponding to the given swap. We shall refer to the vertices $\sigma(\ell') \in V(\overline{Y})$ as ``non-commuters", and the notion of achieving $\sigma'$ from $\sigma$ via some sequence of $(X, Y)$-friendly swaps as showing the ``exchangeability" of $\sigma(1)$ and $\sigma(n)$. Finally, there are many similar cases for which arguing the exchangeability of $\sigma(1)$ and $\sigma(n)$ is entirely analogous; proofs are provided for only one such setting.

\paragraph{Case 1: $\sigma(\ell') \notin \sigma(V(\mathcal C))$.} Here, we have $\ell' \in \mathcal B = \{v+1, \dots, w-1\}$. We further split into cases based on the location of $\ell$.

\subparagraph{Case 1.1: $\sigma(\ell) \in \sigma(V(\mathcal C))$.} The exchangeability of $\sigma(1)$ and $\sigma(n)$ follows immediately by Theorem \ref{orig_result} applied to the subgraph $\FS(\mathcal C, Y|_{\sigma(V(\mathcal C))})$ of $\FS(X, Y)$, as detailed previously.

\subparagraph{Case 1.2: $\sigma(\ell) \notin \sigma(V(\mathcal C))$.} Both $\ell, \ell'$ lie in $\mathcal B$. We split into cases based on their relative ordering.

\medskip

\textbf{Case 1.2.1: $\ell > \ell'$.} Since $\sigma(\ell')$ is the only value $\sigma(\ell)$ fails to commute with, $\sigma(\ell)$ can swap with any value $\sigma(\ell+i)$ with $1 \leq i \leq w - \ell$. Denote $\tau$ as the sequence of swaps given by $\tau = (\ell \ \ell+1)(\ell+1 \ \ell+2) \dots (w-1 \ w)$.

\textbf{Case 1.2.2: $\ell < \ell'$.} In this setting, $\sigma(\ell)$ can swap with any $\sigma(\ell-i)$ with $1 \leq i \leq v + \ell$. Denote $\tau$ as the sequence of swaps given by $\tau = (\ell \ \ell-1)(\ell-1 \ \ell-2) \dots (v+1 \ v)$.

\medskip

\noindent In both Cases 1.2.1 and 1.2.2, perform the sequence of swaps given by $\tau$, resulting in the configuration $\sigma \circ \tau$, where $\sigma(\ell) \in (\sigma \circ \tau)(V(\mathcal C))$, $\sigma(\ell') \notin (\sigma \circ \tau)(V(\mathcal C))$. Refer to Case 1.1 on $\sigma \circ \tau$ to interchange $\sigma(1)$ and $\sigma(n)$, then perform the sequence of transpositions indicated by $\tau^{-1}$ to return all other vertices and achieve $\sigma'$.

\paragraph{Case 2: $\sigma(\ell') \in \sigma(V(\mathcal C))$.}

Henceforth, we assume that for any leaf $\sigma(\ell) \in V(\overline{Y})$, $\sigma(\ell') \in \sigma(V(\mathcal C))$ (refer to Case 1 otherwise). In showing exchangeability of $\sigma(1)$ and $\sigma(n)$ via a sequence of $(X, Y)$-friendly swaps here, we argue on the number of non-commuters $\sigma(\ell')$ that are either $\sigma(1)$ or $\sigma(n)$. In particular, we consider non-commuter vertices $\sigma(\ell')$ in connected components of $\overline{Y}$ with size at most $\lfloor \frac{n}{2} \rfloor$ (i.e. no more than half the size of $\overline{Y}$); certainly at least one such component exists in $\overline{Y}$, which has at least two components.

\subparagraph{Case 2.1: There exists some connected component $C$ of $\overline{Y}$ satisfying $|V(C)| \leq \lfloor \frac{n}{2} \rfloor$ such that $C$ has a non-commuter $\sigma(\ell') \in V(C)$ that is neither $\sigma(1)$ nor $\sigma(n)$.}

Take such a non-commuter $\sigma(\ell')$, which has $\ell' \in V(\mathcal C) = \mathcal B_1 \cup \mathcal B_2$. We prove this for the setting in which $\ell' \in \mathcal B_1 = \{w, w+1, \dots, n\}$, and split into cases based on the location of the leaf $\ell$. In particular, by assumption on $\ell'$, $\ell' \neq n$, so $\ell' \in \{w, w+1, \dots, n-1\}$.

\medskip

\textbf{Case 2.1.1: $\ell \in \mathcal B$.} We split into cases based on whether $\ell' = w$ or not. For $\ell' = w$, define $\nu$ to be the closest value $\sigma(\nu)$ to $\sigma(w)$ along $X$ in a different component of $\overline{Y}$, with $2 \leq \nu \leq n-1$. Such a $\nu$ exists: if $\sigma(1)$, $\sigma(n)$ are the only vertices in a separate component from $\sigma(w)$ in $\overline{Y}$, then $\sigma(1)$, $\sigma(n)$ are the vertices of a component of $\overline{Y}$ isomorphic to $K_2$, contradicting $\{\sigma(1), \sigma(n)\} \in E(Y)$.

\medskip

\textbf{Subcase 2.1.1.1: $\ell' \neq w$.} Observe that $\ell' \notin V(\mathcal C_1)$, as $V(\mathcal C_1) \cap \mathcal B_1 = w$. Thus, appeal to Theorem \ref{orig_result} with respect to $\FS(\mathcal C_1, Y |_{\sigma(V(\mathcal C_1))})$, which has $\sigma(\ell) \in \overline{Y |_{\sigma(V(\mathcal C_1))}}$ isolated, to interchange $\sigma(1)$ and $\sigma(v)$ via some sequence of $(\mathcal C_1, Y |_{\sigma(V(\mathcal C_1))})$-friendly swaps, then swap $\sigma(1)$, $\sigma(n)$ along the edge $\{n, v\}$; call the resulting permutation $\tau$. Since $\sigma(n) \neq \sigma(\ell')$, $\sigma(\ell)$ remains an isolated vertex in $\overline{Y |_{\tau(V(\mathcal C_1))}}$. Thus, interchange $\sigma(1)$ and $\sigma(v)$ via some other sequence of $(\mathcal C_1, Y |_{\tau(V(\mathcal C_1))})$-friendly swaps to achieve $\sigma'$.

\textbf{Subcase 2.1.1.2: $\ell' = w, \ \nu > w. \ \ $} By choice of $\nu$, $\sigma(\nu)$ can swap with any $\sigma(\nu - i)$ for $1 \leq i \leq \nu - w$. Denote $\tau$ as the sequence of swaps given by $\tau = (\nu \ \nu-1)(\nu-1 \ \nu-2) \dots (w+1 \ w)$. Upon performing $\tau$, resulting in $\sigma \circ \tau$, $\sigma(\ell') = \sigma(w)$ is in position $w+1$ (i.e. $(\sigma \circ \tau)(w+1) = \sigma(\ell')$), and thus no longer in cycle $\mathcal C_1$. Refer to Subcase 2.1.1.1 to interchange $\sigma(1)$ and $\sigma(n)$, then perform the sequence of swaps given by $\tau^{-1}$ to return everything else to its original position and achieve $\sigma'$.

\textbf{Subcase 2.1.1.3: $\ell' = w, \ \nu < w. \ \ $} By choice of $\nu$, $\sigma(\nu)$ can swap with any $\sigma(\nu + i)$ for $1 \leq i \leq w - \nu$. Denote $\tau$ as the sequence of swaps $\tau = (\nu \ \nu+1)(\nu+1 \ \nu+2) \dots (w-1 \ w)$. Upon performing $\tau$, $\sigma(\ell') = \sigma(w)$ is in position $w-1 \in \mathcal B$, so refer to Case 1 to interchange $\sigma(1)$ and $\sigma(n)$. Perform the sequence of swaps given by $\tau^{-1}$ to return all other values to their original positions, achieving $\sigma'$.

\medskip

\textbf{Case 2.1.2: $\ell \in \mathcal B_1$.} In this setting, $\ell$ and $\ell'$ both lie on ``the same side" in the cycle $\mathcal C$.

\medskip

\textbf{Subcase 2.1.2.1: $\ell < \ell'$.}
$\sigma(\ell)$ can swap with $\sigma(\ell - i)$ for $1 \leq i \leq \ell-w+1$. Perform the sequence of swaps given by $\tau = (\ell \ \ell-1)(\ell-1 \ \ell-2)\dots(w \ w-1)$: the resulting configuration $\sigma \circ \tau$ has $\sigma(\ell) \in (\sigma \circ \tau)(\mathcal B)$ and $\sigma(\ell') \in (\sigma \circ \tau)(\mathcal B_1)$ (and specifically, $\ell' \neq n$). Thus, refer to Case 2.1.1 with respect to $\sigma \circ \tau$ and interchange $\sigma(1)$ and $\sigma(n)$, then shift $\sigma(\ell)$ back into its original position via the sequence of swaps $\tau^{-1}$.

\textbf{Subcase 2.1.2.2: $\ell > \ell'$.}
Displace $\sigma(\ell')$ into $\mathcal B$ by swapping elements in a different component from $\sigma(\ell')$ in $\overline{Y}$ up into $\mathcal B_1$. Denote the sequence of transpositions that achieves this by $\tau$: such a sequence $\tau$ exists, as we must have at most $n-w < n - \lfloor \frac{n}{2} \rfloor = \lceil \frac{n}{2} \rceil$ positions along $\mathcal B_1$ that map to some element in a different component in $\overline{Y}$ from $\sigma(\ell')$ (excluding $\sigma(\ell)$ from all $n-w+1$ vertices in $\mathcal B_1$), and we can position at least $\lceil \frac{n}{2} \rceil - 1$ vertices up into $\mathcal C$ along the vertices of $\mathcal B_1$ (excluding potentially $\sigma(1)$). Here, $\sigma(\ell') \notin (\sigma \circ \tau)(V(\mathcal C))$, so interchange $\sigma(1)$ and $\sigma(n)$ by appealing to Case 1 with respect to $\sigma \circ \tau$, and swap all values that we have shifted back into place by taking the sequence of swaps given by $\tau^{-1}$.

\medskip

\textbf{Case 2.1.3: $\ell \in \mathcal B_2$.} Here, $\ell$ and $\ell'$ lie ``on different sides" of the cycle $\mathcal C$.
\medskip

\textbf{Subcase 2.1.3.1: $\ell \neq 1$.}
Here, $\sigma(\ell)$ can swap with any $\sigma(\ell + i)$ for $1 \leq i \leq v-\ell+1$. Perform the sequence of swaps given by $\tau = (\ell \ \ell+1) \dots (v-1 \ v)(v \ v+1)$: the resulting configuration $\sigma \circ \tau$ has $\sigma(\ell) \in (\sigma \circ \tau)(\mathcal B)$ and $\sigma(\ell') \in (\sigma \circ \tau)(\mathcal B_1)$. Thus, refer to Case 2.1.1 with respect to $\sigma \circ \tau$ to interchange $\sigma(1)$ and $\sigma(n)$, then shift $\sigma(\ell)$ back into its original position by performing the sequence of swaps given by $\tau^{-1}$.

\textbf{Subcase 2.1.3.2: $\ell = 1$.} As in Subcase 2.1.2.2, displace $\sigma(\ell')$ into $\mathcal B$ by swapping vertices from a different component from $\sigma(\ell')$ in $\overline{Y}$ up into $\mathcal B_1$. Denote the sequence of transpositions that achieves this by $\tau$: such a sequence $\tau$ exists, as we must have at most $n-w+1 \leq n - \lfloor \frac{n}{2} \rfloor = \lceil \frac{n}{2} \rceil$ (from $n-w < n-\lfloor \frac{n}{2} \rfloor$) positions along $\mathcal B_1$ that map to some element in a different component in $\overline{Y}$ from $\sigma(\ell')$, and we can position at least $n - \lfloor \frac{n}{2} \rfloor = \lceil \frac{n}{2} \rceil$ vertices up into $\mathcal C$ along the vertices of $\mathcal B_1$ (since $\sigma(1)$ is in the same component as $\sigma(\ell')$, we can swap up all such vertices). Here, $\sigma(\ell') \notin (\sigma \circ \tau)(V(\mathcal C))$, so interchange $\sigma(1)$ and $\sigma(n)$ by appealing to Case 1 with respect to $\sigma \circ \tau$, and swap all values that we have shifted back into place by taking the sequence of swaps given by $\tau^{-1}$.

\subparagraph{Case 2.2: Any connected component $C$ of $\overline{Y}$ satisfying $|V(C)| \leq \lfloor \frac{n}{2} \rfloor$ has all non-commuters $\sigma(\ell') \in V(C)$ equal to either $\sigma(1)$ or $\sigma(n)$.} This case concerns precisely all remaining settings not studied by Case 2.1 above. We split into cases based on whether both $\sigma(1)$ and $\sigma(n)$ correspond to such a non-commuting vertex, or only one of them does. (Certainly, at least one must, as there exists at least one component $C$ of $\overline{Y}$ with $|V(C)| \leq \lfloor \frac{n}{2} \rfloor$, and any such component has at least one non-commuter vertex $\sigma(\ell')$.) 

\medskip

\textbf{Case 2.2.1: Both $\sigma(1)$ and $\sigma(n)$ correspond to $\sigma(\ell')$ for some appropriate $\ell$.} We can assume neither $\sigma(1)$ nor $\sigma(n)$ are leaves of their respective components in $\overline{Y}$: if one were a leaf, the corresponding component $C$ of $\overline{Y}$ would be isomorphic to $K_2$, which implies that we either contradict $2 = |V(K_2)| \leq \lfloor \frac{n}{2} \rfloor$, or $n=5$ and we can refer to Case 2.1. Certainly, at most one component $C$ of $\overline{Y}$ fails to satisfy $|V(C)| \leq \lfloor \frac{n}{2} \rfloor$, so $\overline{Y}$ has at most three components, and thus either two or three components.

\medskip

\textbf{Subcase 2.2.1.1: $\overline{Y}$ has three components.} Denote the components of $\overline{Y}$ by $C_1, C_2, C_3$, each of which is a tree with at least two leaves. Every component has a vertex of the form $\sigma(\ell')$, so one of the components (say $C_1$) has $|V(C_1)| > \lfloor \frac{n}{2} \rfloor$ (if not, there exists a non-commuter $\sigma(\ell')$ from a component with size at most $\lfloor \frac{n}{2} \rfloor$ not either $\sigma(1)$ or $\sigma(n)$). It follows that both $C_2$ and $C_3$ have exactly one non-commuter $\sigma(\ell')$ (i.e. all leaves adjacent to the same vertex), so both $C_2$ and $C_3$ are necessarily isomorphic to star graphs by Proposition \ref{leaf_prop}, whose centers correspond to $\sigma(1)$ and $\sigma(n)$ (say respectively), and whose leaves all lie in $\sigma(\{2, \dots, n-1\})$. Now, to exchange $\sigma(1)$ and $\sigma(n)$, perform the following sequences of swaps.
\begin{enumerate}
    \item Swap a leaf $\sigma(\ell_1)$ of $C_3$ into $\mathcal C_1$ (if not already there, onto vertex $w$ from $\mathcal B_1$) and call the resulting configuration $\sigma_1$. The vertex $\sigma(\ell_1)$ is isolated in $Y |_{\sigma_1(V(\mathcal C_1))}$, so apply Theorem 2.1 to $\FS(\mathcal C_1, Y |_{\sigma_1(V(\mathcal C_1))})$ to interchange $\sigma(1)$ with $\sigma(v)$, then swap $\sigma(1)$ and $\sigma(n)$ along $\{n, v\}$. If $\sigma(\ell_1)$ was swapped onto $w$, then swap $\sigma(\ell_1)$ back to its original position in $\mathcal B_1$.
    \item Swap a leaf $\sigma(\ell_2)$ of $C_2$ into $\mathcal C_1$ (if not already there, onto vertex $w$ from $\mathcal B_1$) and call the resulting configuration $\sigma_2$. The vertex $\sigma(\ell_2)$ is an isolated vertex in $Y |_{\sigma_2(V(\mathcal C_1))}$, so apply Theorem 2.1 to $\FS(\mathcal C_1, Y |_{\sigma_2(V(\mathcal C_1))})$ to interchange $\sigma(n)$ with $\sigma(v)$. If $\sigma(\ell_2)$ was swapped onto $w$, swap $\sigma(\ell_2)$ back to its original position.
\end{enumerate}
This sequence of swaps, which achieves $\sigma'$, can also be immediately adapted to the setting in which $\overline{Y}$ has two components, both of which are star graphs, and their centers correspond to $\sigma(1)$ and $\sigma(n)$. 

\medskip

\textbf{Subcase 2.2.1.2: $\overline{Y}$ has two components.} Let the two components be $C_1, C_2$. If $|V(C_1)| \leq \lfloor \frac{n}{2} \rfloor$ and $|V(C_2)| \leq \lfloor \frac{n}{2} \rfloor$, then $C_1$ and $C_2$ each have exactly one non-commuter corresponding to $\sigma(1)$ and $\sigma(n)$, so $C_1$ and $C_2$ are both isomorphic to star graphs, and we argue as in Subcase 2.2.1.1. Now assume $|V(C_1)| > \lfloor \frac{n}{2} \rfloor$, so $|V(C_2)| \leq \lfloor \frac{n}{2} \rfloor$. The component $C_2$ has exactly two non-commuters $\sigma(\ell')$, namely $\sigma(1)$ and $\sigma(n)$: say they correspond to leaves $\sigma(\ell_1)$ and $\sigma(\ell_n)$, respectively, where we have $2 \leq \ell_1, \ell_n \leq n-1$. To exchange $\sigma(1)$ and $\sigma(n)$, perform the following sequences of swaps.
\begin{enumerate}
    \item Swap $\sigma(\ell_n)$ into $\mathcal C_1$ (if not already there, onto vertex $w$ from $\mathcal B_1$), and call the resulting configuration $\sigma_1$. The vertex $\sigma(\ell_n)$ is isolated in $Y |_{\sigma_1(V(\mathcal C_1))}$, so apply Theorem 2.1 to $\FS(\mathcal C_1, Y |_{\sigma_1(V(\mathcal C_1))})$ to interchange $\sigma(1)$ with $\sigma(v)$, then swap $\sigma(1)$ and $\sigma(n)$ along $\{n, v\}$. If moved initially, swap $\sigma(\ell_n)$ back to its original position in $\mathcal B_1$.
    \item Swap $\sigma(\ell_1)$ into $\mathcal C_1$ (if not already there, onto vertex $w$ from $\mathcal B_1$), and call the resulting configuration $\sigma_2$. The vertex $\sigma(\ell_1)$ is isolated in $Y |_{\sigma_2(V(\mathcal C_1))}$, so apply Theorem 2.1 to $\FS(\mathcal C_1, Y |_{\sigma_2(V(\mathcal C_1))})$ to interchange $\sigma(1)$ with $\sigma(v)$. If moved initially, swap $\sigma(\ell_1)$ back to its original position in $\mathcal B_1$.
\end{enumerate}

\medskip

\textbf{Case 2.2.2: Either $\sigma(1)$ or $\sigma(n)$, but not both, corresponds to $\sigma(\ell')$ for some appropriate $\ell$.} In this case, $\overline{Y}$ must have exactly two components, which we denote $C_1$ and $C_2$. Certainly, at least one of them (say $C_2$) has $|V(C_2)| \leq \lfloor \frac{n}{2} \rfloor$, and $|V(C_1)| > \lfloor \frac{n}{2} \rfloor$. Thus, $C_2$ must be isomorphic to a star graph; either $\sigma(1)$ or $\sigma(n)$ corresponds to the central vertex of $C_2$, while the other is some vertex in $C_1$. Here, take $\sigma(1)$ as the center of the star graph, and $\sigma(n)$ as some vertex in $C_1$. (The proof for the other case is entirely analogous.) Consider all non-commuter vertices in $C_1$ that are not $\sigma(n)$: if we could displace one to $\mathcal B$ by swapping vertices in $C_2$, Case 1 gives exchangeability of $\sigma(1)$ and $\sigma(n)$, so assume this is not possible. We can also assume there exists some non-commuting vertex and corresponding leaf in $C_1$ both not $\sigma(n)$, as otherwise $C_1$ is also isomorphic to a star graph with center $\sigma(n)$, a setting addressed by Subcase 2.2.1.1. Call the non-commuting vertex and leaf $\sigma(\ell_a')$ and $\sigma(\ell_a)$, respectively, with $2 \leq \ell_a, \ell_a' \leq n-1$, and consider the values of $\ell_a$ and $\ell_a'$: by performing sequences of swaps analogous to those provided in Case 2.1, the only problematic relative ordering is that in which $\ell_a, \ell_a' \in \mathcal B_1$ or $\ell_a, \ell_a' \in \mathcal B_2$, and $\ell_a$ lies ``above" $\ell_a'$ (i.e. the two cases given by $\ell_a, \ell_a' \in \mathcal B_1$ and $\ell_a > \ell_a'$, or $\ell_a, \ell_a' \in \mathcal B_2$ and $\ell_a < \ell_a'$).

\medskip

If $\ell_a, \ell_a' \in \mathcal B_1$, perform the following sequences of swaps to interchange $\sigma(1)$ and $\sigma(n)$.
\begin{enumerate}
    \item Move all leaves of $C_2$ up into $\mathcal B_1$, so that $\sigma(\ell_a), \sigma(\ell_a')$ remain in $\mathcal B_1$ by assumption. Call the resulting configuration $\sigma_1$: any leaf vertex of $C_2$ is isolated in $Y |_{\sigma_1(V(\mathcal C_2))}$, so apply Theorem 2.1 to $\FS(\mathcal C_2, Y |_{\sigma_1(V(\mathcal C_2))})$ to interchange $\sigma(\ell_a)$ with $\sigma(\ell_a')$. From here, we can interchange $\sigma(1)$ and $\sigma(n)$. 
    \item To interchange $\sigma(\ell_a)$ and $\sigma(\ell_a')$ back, perform $\tau = (n \ v)(v \ v-1)$, resulting in $\sigma_2$, so any leaf vertex of $C_2$ is again isolated in $Y |_{\sigma_2(V(\mathcal C_2))}$. Apply Theorem 2.1 to $\FS(\mathcal C_2, Y |_{\sigma_2(V(\mathcal C_2))})$ to interchange $\sigma(\ell_a)$ with $\sigma(\ell_a')$, then perform $\tau^{-1}$. Move all leaves of $C_2$ back to their original positions to achieve $\sigma'$.
\end{enumerate}

Similarly, if $\ell_a, \ell_a' \in \mathcal B_2$, perform the following sequences of swaps to interchange $\sigma(1)$ and $\sigma(n)$.
\begin{enumerate}
    \item Move all leaves of $C_2$ up into $\mathcal B_2$, so that $\sigma(\ell_a), \sigma(\ell_a')$ remain in $\mathcal B_2$ by assumption. Perform $\tau = (1 \ w)(w \ w+1)$: call the resulting configuration $\sigma_1$. Any leaf vertex of $C_2$ is isolated in $Y |_{\sigma_1(V(\mathcal C_1))}$, so apply Theorem 2.1 to $\FS(\mathcal C_1, Y |_{\sigma_1(V(\mathcal C_1))})$ to interchange $\sigma(\ell_a)$ with $\sigma(\ell_a')$. Perform $\tau^{-1}$, then interchange $\sigma(1)$ and $\sigma(n)$. 
    \item Say the current configuration is $\sigma_2$. To interchange $\sigma(\ell_a)$ and $\sigma(\ell_a')$ back, all leaves of $C_2$ remain isolated in $Y |_{\sigma_2(V(\mathcal C_1))}$, so apply Theorem 2.1 again to $\FS(\mathcal C_1, Y |_{\sigma_1(V(\mathcal C_1))})$. Then move all leaves of $C_2$ back to their original positions to achieve $\sigma'$.
\end{enumerate}

\noindent This shows exchangeability of $\sigma(1)$ and $\sigma(n)$ to achieve $\sigma'$ in all cases, completing the proof of the lemma.
\end{proof}

\begin{remark}
Lemma \ref{r=1} assumes a looser condition on $Y$ than requiring $\overline{Y}$ to be a forest with trees of jointly coprime size: we merely require $\overline{Y}$ to be a forest containing at least two trees, and with no constraints on their sizes. This will be important in the proof of the more general $r \geq 2$ case.
\end{remark}

\begin{theorem}
Let $X$ be a biconnected graph on $n \geq 3$ vertices with an open ear decomposition with at most one ear, and $Y$ be a graph on $n$ vertices such that $\overline{Y}$ is a forest with trees $\mathcal T_1, \dots, \mathcal T_r$ with $\gcd(|V(\mathcal T_1)|, \dots, |V(\mathcal T_r)|) = 1$.
Then $\FS(X, Y)$ is connected.
\end{theorem}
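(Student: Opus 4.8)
The plan is to reduce every instance to the cycle $\Cycle_n$, where Theorem \ref{orig_result} already decides connectivity, using Lemma \ref{r=1} to simulate the single cycle edge that $X$ may be missing. First note that $\gcd(|V(\mathcal T_1)|, \dots, |V(\mathcal T_r)|) = 1$ together with $n \geq 3$ forces $r \geq 2$, so $\overline Y$ is a forest with at least two trees; in particular $\FS(\Cycle_n, Y)$ is connected by Theorem \ref{orig_result}, and the hypotheses on $\overline Y$ appearing in Lemma \ref{r=1} are satisfied. If $X$ has an open ear decomposition with $r = 0$ ears, then $X \cong \Cycle_n$ and we are done. If $X$ is Hamiltonian — which covers every biconnected $X$ on $n \leq 4$ vertices with at most one ear, as well as any one-ear $X$ that remains Hamiltonian — then $X$ contains a spanning copy of $\Cycle_n$, so $\FS(\Cycle_n, Y)$ is a spanning subgraph of $\FS(X, Y)$ by Proposition \ref{basic_props}(4), whence $\FS(X, Y)$ is connected.

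It remains to handle a non-Hamiltonian biconnected graph $X$ on $n \geq 5$ vertices with exactly one ear. By Proposition \ref{ear_fold} I may assume $V(X) = [n]$ and $E(X) = \{\{i, i+1\} : 1 \leq i \leq n-1\} \cup \{1, w\} \cup \{n, v\}$ with $2 \leq v \leq \lfloor n/2 \rfloor < w \leq n-1$ and $v + [(n+1) - w] < n$, which is precisely the setup of Lemma \ref{r=1}. Let $X^{+}$ be the graph obtained from $X$ by adding the edge $\{1, n\}$. Then $X^{+}$ contains $\Cycle_n$ as a spanning subgraph — namely the Hamiltonian path $1, 2, \dots, n$ closed up by the edge $\{1, n\}$ — so $\FS(X^{+}, Y)$ is connected, once more by Theorem \ref{orig_result} and Proposition \ref{basic_props}(4).

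Finally I would transfer connectivity from $\FS(X^{+}, Y)$ to $\FS(X, Y)$. Given arbitrary $\sigma, \sigma' \in \mathfrak S_n$, fix a path joining them in $\FS(X^{+}, Y)$ and examine each step. A step along an edge of $X$ is already an $(X, Y)$-friendly swap; a step along the added edge $\{1, n\}$ carries some configuration $\rho$ to $\rho \circ (1\ n)$ with $\{\rho(1), \rho(n)\} \in E(Y)$, and Lemma \ref{r=1} furnishes a sequence of $(X, Y)$-friendly swaps realizing exactly this exchange. Concatenating these local replacements turns the path into a walk from $\sigma$ to $\sigma'$ in $\FS(X, Y)$, so $\FS(X, Y)$ is connected. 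The genuine content lies entirely in Lemma \ref{r=1}, which is already proved; what remains here is only the bookkeeping of the reductions, the one delicate point being to confirm that the coprimality hypothesis indeed yields $r \geq 2$, so that Theorem \ref{orig_result} and Lemma \ref{r=1} are both applicable.
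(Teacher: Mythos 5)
Your proposal is correct and follows essentially the same route as the paper: dispose of the Hamiltonian and zero-ear cases via Theorem \ref{orig_result} and the subgraph property, put the remaining non-Hamiltonian one-ear graphs into the normal form of Proposition \ref{ear_fold}, and then observe that the only swaps available to $\Cycle_n$ (equivalently, to your $X^{+}$) but not to $X$ are those across $\{1,n\}$, which Lemma \ref{r=1} simulates. Your auxiliary graph $X^{+}$ is a harmless repackaging of the paper's direct comparison of $\FS(\Cycle_n, Y)$ with $\FS(X, Y)$, and your observation that coprimality with $n \geq 3$ forces at least two trees in $\overline{Y}$ correctly justifies the applicability of Lemma \ref{r=1}.
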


\begin{proof}
As remarked previously, the result immediately follows for $n = 3$, $n = 4$, $X$ Hamiltonian, or when $\overline{Y}$ has an isolated vertex. For $n \geq 5$ and for any other such $X$ and $Y$, $\overline{Y}$ has at least two connected components from the coprimality condition, and thus at least four leaves. The biconnected graph $X$ has an open ear decomposition with one ear, and is isomorphic to a graph of the form studied in Theorem \ref{r=1}. Here, any edge $\{\sigma, \sigma'\} \in E(\FS(\Cycle_n, Y))$ that fails to be in $E(\FS(X, Y))$ must have been achieved by a $(\Cycle_n, Y)$-friendly swap across the edge $\{1, n\}$. Specifically, we must have that $\sigma(1) = \sigma'(n), \ \sigma(n) = \sigma'(1), \ \sigma(i) = \sigma'(i)$ for all $i \in [n] \setminus \{1, n\}$, and that $\{\sigma(1), \sigma(n)\} \in E(Y)$. Lemma \ref{r=1} guarantees the existence of a sequence of $(X,Y)$-friendly swaps from $\sigma$ to $\sigma'$. Thus, the vertices incident to any edge in $\FS(\Cycle_n, Y)$ but not in $\FS(X, Y)$ are connected via some other sequence of $(X, Y)$-friendly swaps in $\FS(X, Y)$. Since $\FS(\Cycle_n, Y)$ is connected, it follows that $\FS(X, Y)$ is also connected for any such $X$.
\end{proof}

\subsection{$r \geq 2$}

We introduce two lemmas for the proof of the general case. The first shows that we have some freedom in how we construct an open ear decomposition of a biconnected graph $X$ by starting with any arbitrary cycle subgraph of $X$. The second gives a well-known equivalent characterization of biconnectivity.

\begin{lemma} \label{init_cycle}
Let $X$ be a biconnected graph with $r$ ears, and let $P_0$ be any simple cycle in the graph $X$. Then there exists an open ear decomposition of $X$ with $P_0$ as the initial simple cycle. 
\end{lemma}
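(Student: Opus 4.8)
The plan is to build the desired open ear decomposition greedily, starting from the prescribed cycle $P_0$ and repeatedly attaching ears, using biconnectivity to guarantee that the process never gets stuck until all of $X$ is exhausted. Concretely, set $G_0 = P_0$ and suppose inductively that we have constructed subgraphs $G_0 \subseteq G_1 \subseteq \dots \subseteq G_i$ of $X$, where each $G_{j}$ for $j \geq 1$ is obtained from $G_{j-1}$ by adjoining a path $P_j$ whose two endpoints lie in $G_{j-1}$ and whose internal vertices are new. If $G_i = X$ we are done; otherwise we must produce a valid ear $P_{i+1}$. Since $G_i \neq X$ but $X$ is connected, there is an edge $e = \{a,b\} \in E(X) \setminus E(G_i)$ with $a \in V(G_i)$.

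For the inductive step, I would split into two cases depending on whether $b \in V(G_i)$. If $b \in V(G_i)$ as well, then the single edge $e$ is itself a legal ear (a path of length one with both endpoints already present and no internal vertices), so take $P_{i+1} = e$. If $b \notin V(G_i)$, I would extend the edge $e$ to a path: starting from $b$, walk through vertices outside $V(G_i)$ — this is possible to continue as long as the current endpoint has a neighbor outside $G_i$ — and argue that this walk must eventually re-enter $V(G_i)$, giving a path $P_{i+1}$ from $a$ through vertices outside $G_i$ back to some vertex $c \in V(G_i)$. The key point is that it cannot terminate in a dead end inside $X \setminus G_i$; here is where biconnectivity enters. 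Indeed, if the walk could only reach a single vertex of $G_i$ (namely $a$), then $a$ would be a cut vertex of $X$ separating the internal vertices of the walk from the rest of $G_i$ (which contains the cycle $P_0$, hence at least two other vertices), contradicting that $X$ has no cut vertex. More carefully, one uses the standard fact that in a biconnected graph any two vertices lie on a common cycle, or equivalently that for the vertex $b \notin V(G_i)$ and any two distinct vertices of $G_i$ there are internally disjoint paths; selecting a path from $b$ to $V(G_i)$ that avoids $a$ and then prepending the edge $\{a,b\}$ yields an ear with two distinct endpoints $a \neq c$ in $G_i$, as Definition \ref{open_ear_decomp} requires (the ear must not be a closed loop). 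One should take this path to be a shortest path from $b$ to $V(G_i) \setminus \{a\}$ in $X |_{(V(X) \setminus V(G_i)) \cup \{a, c\}}$-type subgraph so that its internal vertices are genuinely disjoint from $G_i$.

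Finally, since each ear strictly enlarges the vertex set or the edge set of $G_i$ and $X$ is finite, the process terminates with some $G_r = X$, and by construction $[P_0, P_1, \dots, P_r]$ satisfies all three conditions of Definition \ref{open_ear_decomp}; by Proposition \ref{ear_eq} the number of ears produced is automatically $r = |E(X)| - |V(X)|$, matching the hypothesis. The main obstacle is the case $b \notin V(G_i)$: one must carefully justify, using the absence of a cut vertex, that the partially built path outside $G_i$ can always be completed to a genuine ear with two \emph{distinct} endpoints in $G_i$ rather than looping back only to $a$; everything else is bookkeeping. A clean way to package this is to invoke Menger's theorem (two internally disjoint $b$-to-$V(G_i)$ paths exist because $X$ is $2$-connected and $|V(G_i)| \geq 2$), which immediately furnishes a path from $b$ to $V(G_i)$ missing $a$, and hence the required ear.
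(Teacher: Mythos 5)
Your proof is correct, but it takes a genuinely different route from the paper. The paper establishes the lemma by appealing to Algorithm~1 of Schmidt's work on ear/chain decompositions: it constructs a depth-first search tree rooted at a vertex of $P_0$ and traversing $P_0$ first, so that the first backedge processed yields $P_0$ as the initial cycle of the resulting decomposition. You instead give the classical self-contained induction: start with $G_0 = P_0$, and at each stage pick an edge $\{a,b\}$ leaving the current subgraph $G_i$; if $b \in V(G_i)$ the edge itself is a (trivial) ear, and otherwise $2$-connectedness (equivalently, connectedness of $X - a$ together with $|V(G_i)| \geq 3$, or the fan form of Menger's theorem) furnishes a path from $b$ back to $V(G_i) \setminus \{a\}$ whose internal vertices avoid $G_i$, yielding an ear with two distinct endpoints as Definition~\ref{open_ear_decomp} demands. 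Your middle paragraph's ``keep walking outside $G_i$'' heuristic is loose on its own, but the Menger packaging you give at the end makes the step rigorous, and the termination and ear-count bookkeeping via Proposition~\ref{ear_eq} are fine. The trade-off: your argument is elementary and fully self-contained, essentially reproving Proposition~\ref{bicon} in the stronger form needed; the paper's argument is shorter on the page but asks the reader to verify that the cited DFS-based algorithm can indeed be seeded with an arbitrary prescribed cycle.
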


\begin{proof}
Refer to Algorithm 1 in \cite{schmidt2013simple}. As elaborated in the statements of Theorems 2 and 3 of this work, this algorithm will determine, when given some biconnected graph $X$, an open ear decomposition of $X$. Say that we want to construct an open-ear decomposition of $X$ that has the cycle subgraph $P_0$ of $X$ as the initial simple cycle. To achieve this, we can preferentially construct a depth-first search tree $\mathcal T$ (requested in step 1 of Algorithm 1) to be such that the root $r$ is a vertex of $P_0$, and $\mathcal T$ is constructed by moving around one direction of this cycle $P_0$. We shall assume that the remaining edge in $P_0$ will be a backedge $e$ oriented away from $r$, while all other edges of $P_0$ are oriented towards $r$ (as they are tree edges, or edges in $\mathcal T$). From here, as detailed in \cite{schmidt2013simple}, we begin constructing the open ear decomposition by taking backedges starting at the vertex $r$ (since $r$ is certainly least in the depth-first index of the tree $\mathcal T$, which is rooted at $r$). In particular, we can begin by tracing along the backedge $e$, which will yield the first cycle in our open-ear decomposition to be exactly $P_0$. 
\end{proof}

\begin{lemma}[\cite{Harary1969}]
A graph $G$ is biconnected if and only if for any two vertices $v, w \in V(G)$ there exists a cycle subgraph in $G$ containing the vertices $v$ and $w$.
\end{lemma}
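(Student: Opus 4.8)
The plan is to prove both implications directly, with the forward direction being the substantive one. For the easy implication, suppose every pair of vertices of $G$ lies on a common cycle. Then $G$ is connected, since a cycle through $v$ and $w$ contains in particular a $v$–$w$ path. To see that $G$ has no cut vertex, suppose toward a contradiction that $x$ is one, and pick $v, w$ in distinct components of $G|_{V(G) \setminus x}$. A cycle through $v$ and $w$ decomposes into two $v$–$w$ paths sharing only their endpoints; since $x$ is a single vertex, at least one of these two paths avoids $x$, and that path lies entirely in $G|_{V(G) \setminus x}$ and joins $v$ to $w$ — contradicting that $x$ separates them. (As is standard here we work with $n \geq 3$; for $n = 2$ the statement degenerates, consistent with the convention already in force in Proposition \ref{bicon}.)

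For the forward implication, assume $G$ is biconnected on $n \geq 3$ vertices; we show that any $v, w \in V(G)$ lie on a common cycle, equivalently that there exist two internally vertex-disjoint $v$–$w$ paths (whose union is then such a cycle, and conversely). I would induct on $d = d_G(v, w)$. For the base case $d = 1$, write $e = \{v, w\}$; one $v$–$w$ path is the edge $e$, and it suffices to produce a second one inside $G$ with $e$ deleted. This follows once we observe that a biconnected graph on $n \geq 3$ vertices is $2$-edge-connected: if some edge $\{x, y\}$ were a bridge, then, since $n \geq 3$, one side of the two components obtained by deleting $\{x,y\}$ contains a vertex other than its attachment point, and deleting that attachment point then disconnects $G$, contradicting the absence of a cut vertex. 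Hence $G$ with $e$ removed is connected and contains a $v$–$w$ path internally disjoint from $e$.

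For the inductive step $d \geq 2$, let $u$ be the vertex adjacent to $w$ on a shortest $v$–$w$ path, so that $d_G(v, u) = d - 1$. By the inductive hypothesis there is a cycle $C$ through $v$ and $u$, i.e. two internally disjoint $v$–$u$ paths with union $C$; if $w \in V(C)$ we are already done, so assume not. Since $u$ is not a cut vertex, $G|_{V(G)\setminus u}$ is connected and contains a $v$–$w$ path $Q$. Traversing $Q$ from $w$, let $z$ be the first vertex of $Q$ that lies on $C$ (it exists since $v \in V(C)$), and let $Q'$ be the segment of $Q$ from $w$ to $z$; then $Q'$ avoids $u$ and meets $C$ only at $z$. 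The vertex $z$ splits $C$ into two arcs, and one assembles two internally disjoint $v$–$w$ paths by combining $Q'$ with a suitable arc of $C$ from $v$ to $z$, and separately the arc of $C$ from $v$ to $u$ followed by the edge $\{u, w\}$, choosing the arc at $z$ so that the two resulting paths meet only at $v$ and $w$. This choice-of-arc bookkeeping, together with the boundary subcases $z = v$ and $z = u$, is the one genuinely fiddly point; in each subcase one verifies disjointness and thereby obtains the desired cycle.

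I expect this arc-selection case analysis in the inductive step to be the main obstacle — it is the classical "fan lemma" bookkeeping, routine but requiring care. An alternative route, arguably cleaner given the machinery already developed, is to induct on the number of ears $r$ of an open ear decomposition of $G$ (available by Proposition \ref{bicon}): for $r = 0$ the cycle $P_0$ contains all vertices, and for $r \geq 1$ one separates the case where $v, w$ both lie in the union of the first $r$ subgraphs (apply the inductive hypothesis to that biconnected subgraph) from the case where one of them is interior to the last ear $P_r$ (combine the path $P_r$ through that vertex with two internally disjoint paths from the other vertex to the two endpoints of $P_r$, obtained from the biconnected subgraph spanned by the earlier ears). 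I would present the distance-induction argument as the main proof and record the ear-decomposition variant as a remark.
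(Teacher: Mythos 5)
The paper does not prove this lemma at all: it is quoted as a known result with a citation to Harary's textbook (it is Whitney's characterization of $2$-connectivity, a special case of Menger's theorem), so there is no in-paper argument to compare against. Your proof is the standard textbook argument and is correct. The easy direction is handled cleanly, and your observation that a biconnected graph on $n \geq 3$ vertices is $2$-edge-connected correctly disposes of the base case $d(v,w)=1$. The inductive step is the classical distance induction: the only genuinely delicate point is the one you identify, namely choosing the arc of $C$ at $z$ so that the two assembled $v$--$w$ paths are internally disjoint, and your description of how to do so (take the $v$--$z$ portion of the arc containing $z$ for one path, and the other arc from $v$ to $u$ followed by the edge $\{u,w\}$ for the second) is right; note only that the subcase $z = u$ cannot actually occur, since $Q$ lives in $G$ with $u$ deleted. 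Your remark restricting to $n \geq 3$ is also apt, since $K_2$ is biconnected under the paper's definition but carries no cycle. The ear-decomposition variant you sketch is arguably the better fit for this paper, since Proposition \ref{bicon} and Lemma \ref{init_cycle} already put that machinery on the table and the induction on the number of ears is shorter; but as a self-contained replacement for the citation, either version would do.
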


Take biconnected $X$ with $r \geq 2$ ears and open ear decomposition $[P_0, P_1, \dots, P_r]$, and let the endpoints of  $P_r$ be denoted $v, w$. Call the graph consisting of strictly the first $r-1$ ears $X_{r-1}$, which is itself a biconnected graph with $r-1$ ears and open ear decomposition $[P_0, P_1, \dots, P_{r-1}]$.
Take any cycle $P_0'$ that is a subgraph of $X_{r-1}$ and contains the vertices $v$ and $w$, and construct a new open ear decomposition $[P_0', P_1', \dots, P_{r-1}']$ of $X_{r-1}$ that has $P_0'$ as its initial cycle, so $[P_0', P_r, P_1', \dots, P_{r-1}']$ is an open ear decomposition of $X$. 

Consider the biconnected subgraph $\Tilde{X} = [P_0', P_r]$ of $X$ with $m \leq n$ vertices, which we can assume is non-Hamiltonian.\footnote{The aim of the proceeding argument is to induct on the number of ears $r$. The induction would be trivial if $\tilde{X}$ were Hamiltonian, since in this setting $X$ only adds edges to a biconnected graph with $r-1$ ears. In particular, we have $|V(\tilde{X})| \geq 5$.} By Proposition \ref{ear_fold}, $\Tilde{X}$ is isomorphic to a graph constructed from $\Cycle_m$ by removing the edge $\{1, m\}$ and adding two crossing edges. We can thus understand $X$ as an $(r-1)$-ear biconnected graph $X_0$ with one edge in the initial cycle removed (denote this $\{v, w\}$; one of these corresponds to an inner vertex of $P_r$) and two more edges added.

\begin{proposition} \label{geq2_isom}
Let $X$ be any biconnected graph with $r \geq 2$ ears, and the graph $X_0$ and vertices $v,w$ be as described above. There exists an open ear decomposition of $X$ with cycle and first ear $P_0^* = [P_0, P_1]$ and outer ears $[P_2, \dots, P_r]$, where $P_0^*$ is a graph of the form described in Proposition \ref{ear_fold}. Furthermore, at least one of the vertices $v$ or $w$ has degree $2$ in $X_0$.
\end{proposition}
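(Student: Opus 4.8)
The plan is to read the existence statement straight off the construction carried out just before the proposition, and to prove the degree assertion by tracking which vertices of $\tilde X$ disappear upon passing to $X_{r-1}$.

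\emph{Existence of the decomposition.} By the cycle characterization of biconnectivity, $X_{r-1}$ contains a cycle $P_0'$ through both endpoints of the last ear $P_r$, and by Lemma~\ref{init_cycle} there is an open ear decomposition $[P_0', P_1', \dots, P_{r-1}']$ of $X_{r-1}$ with $P_0'$ as its initial cycle; hence $[P_0', P_r, P_1', \dots, P_{r-1}']$ is an open ear decomposition of $X$. Its first two terms span the biconnected subgraph $\tilde X = [P_0', P_r]$, which has exactly one ear and (as noted in the footnote) may be taken non-Hamiltonian, so in particular has at least five vertices; Proposition~\ref{ear_fold} then shows $\tilde X$ is isomorphic to a graph $P_0^*$ of the normal form given there. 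Relabelling $P_0 := P_0'$, $P_1 := P_r$, and $P_j := P_{j-1}'$ for $2 \le j \le r$ yields the claimed decomposition $[P_0^*, P_2, \dots, P_r]$ with $P_0^* = [P_0, P_1]$.

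\emph{The degree claim.} Under the isomorphism of Proposition~\ref{ear_fold}, the initial cycle of $P_0^*$ is identified with $P_0$ and the ear with $P_1 (= P_r)$, and the single edge whose deletion turns the normal form $\tilde X$ into the spanning cycle $\Cycle_m$ of $V(\tilde X)$ joins a vertex lying on $P_0$ to an internal vertex of $P_1$. This deleted edge is the edge $\{v,w\}$ of the statement; take $w$ to be its endpoint that is an internal vertex of $P_1$. Reversing the operation on the $\tilde X$-part, $X_0 = \Cycle_m \cup P_2 \cup \dots \cup P_r$, with open ear decomposition $[\Cycle_m, P_2, \dots, P_r]$; that is, $X_0$ is $\Cycle_m$ together with precisely the outer ears $P_1', \dots, P_{r-1}'$ of $X_{r-1}$. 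Every vertex occurring on one of these ears lies in $V(X_{r-1})$, while the internal vertices of $P_r$ are, by the definition of an open ear decomposition, absent from $X_{r-1}$. Hence $w$ lies on no $P_j$ with $j \ge 2$, so its only neighbours in $X_0$ are its two neighbours along $\Cycle_m$, and the degree of $w$ in $X_0$ equals $2$.

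\emph{Anticipated obstacle.} The only real friction should be bookkeeping: Proposition~\ref{ear_fold} delivers the normal form only up to relabelling, so one must carefully confirm that the edge deleted in forming $\Cycle_m$ really is $\{v,w\}$ and that one of its endpoints really is an internal vertex of $P_r$ (equivalently of $P_1$). Once this identification is pinned down the degree count is immediate, because the defining feature of an open ear decomposition is exactly that each ear contributes new internal vertices not occurring in the earlier subgraphs.
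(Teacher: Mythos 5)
Your argument is the paper's own argument: the paper gives no separate proof of this proposition, reading both claims off the construction in the preceding paragraphs (the cycle-through-two-vertices characterization of biconnectivity plus Lemma \ref{init_cycle} for the existence of the decomposition, Proposition \ref{ear_fold} for the normal form of $\tilde X = [P_0', P_r]$, and the parenthetical remark that one endpoint of the deleted edge is an inner vertex of $P_r$ for the degree claim). Your degree count, granted that identification, is exactly right: an internal vertex of $P_r$ does not belong to $X_{r-1}$, hence lies on none of the outer ears $P_1', \dots, P_{r-1}'$, so in $X_0 = \Cycle_m \cup P_1' \cup \dots \cup P_{r-1}'$ it meets only its two neighbours along the Hamiltonian cycle $\Cycle_m$ of $V(\tilde X)$.

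The step you flag as an ``anticipated obstacle'' but do not close is, however, genuinely the crux, and it does not follow automatically from Proposition \ref{ear_fold}. That proposition's normal form is obtained only after assuming, without loss of generality, that the ear of the one-ear graph has length at most $\lfloor m/2 \rfloor$; when $|E(P_r)| > \lfloor m/2 \rfloor$ the decomposition of $\tilde X$ is first exchanged for one whose initial cycle contains $P_r$ and whose ear is one arc of $P_0'$ between the endpoints of $P_r$. In that case the deleted edge of the normal form joins an inner vertex of the new ear (an arc of $P_0'$) to an inner vertex of the shorter of the two arcs of the new initial cycle, and a short length count (using $|E(P_0')| = m+1-|E(P_r)| \le \lceil m/2 \rceil$) shows this shorter arc is again an arc of $P_0'$ rather than $P_r$. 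Both endpoints of the deleted edge then lie on $P_0' \subseteq X_{r-1}$, where they may serve as endpoints of outer ears and so have degree $3$ or more in $X_0$; the argument ``$w$ is internal to $P_r$, hence has degree $2$'' does not apply as written. So this identification needs either a separate treatment of that case (for instance, a more careful choice of $P_0'$ or of the normal-form decomposition) or must be accepted on the same terms as the paper's unproved parenthetical. Apart from this shared point, your writeup is correct.
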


\noindent We now establish the appropriate analogue of Lemma \ref{r=1} for the general case.
\begin{lemma} \label{r_geq_2}
Let $X$ be a biconnected graph on $n \geq 5$ vertices with $r \geq 2$, and let $Y$ be such that $\overline{Y}$ is a forest with at least two trees. 
Let $v, w$ be the two endpoints of the edge that was removed from $X_0$ in constructing $X$.
Take $\sigma, \sigma'$ such that $\sigma(v) = \sigma'(w)$, $\sigma(w) = \sigma'(v)$, $\sigma(i) = \sigma'(i)$ for all $i \in V(X) \setminus \{v, w\}$, and $\{\sigma(v), \sigma(w)\} \in E(Y)$. Then there exists a sequence of $(X,Y)$-friendly swaps from $\sigma$ to $\sigma'$.
\end{lemma}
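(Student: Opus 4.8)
The plan is to reduce the general $r \geq 2$ case to the already-established one-ear case (Lemma \ref{r=1}) by exploiting the structure afforded by Proposition \ref{geq2_isom}. By that proposition, $X$ admits an open ear decomposition whose first cycle-plus-ear $P_0^* = [P_0, P_1]$ is a graph of exactly the form treated in Lemma \ref{r=1}, and whose removed edge $\{v,w\}$ has at least one endpoint (say $w$, WLOG, taking the role of vertex $n$ in Lemma \ref{r=1}) of degree $2$ in $X_0$. The subgraph $\tilde X = P_0^*$ (on $m \leq n$ vertices, which we may assume is non-Hamiltonian, so $m \geq 5$) is then precisely a graph of the type in the hypothesis of Lemma \ref{r=1}. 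The first step is therefore to observe that the desired swap of $\sigma(v)$ and $\sigma(w)$ is a swap entirely internal to this subgraph $\tilde X$: both $v$ and $w$ lie in $V(\tilde X)$, and all vertices outside $V(\tilde X)$ are to be fixed.

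Next, I would invoke Proposition \ref{subgraph_lemma}: for the bijection $\psi$ that agrees with $\sigma$ on $V(X) \setminus V(\tilde X)$ (these values being held constant throughout, which is consistent since the outer ears $P_2, \dots, P_r$ only add vertices and edges and we never touch those positions), there is a subgraph of $\FS(X, Y)$ isomorphic to $\FS(\tilde X, Y|_{\mathcal W})$, where $\mathcal W = \sigma(V(\tilde X))$, consisting of all bijections consistent with $\psi$. The key point is then to verify that $\overline{Y|_{\mathcal W}}$ is a forest with at least two trees — the precise hypothesis that Lemma \ref{r=1} requires. That it is a forest is immediate since $\overline{Y}$ is a forest and induced subgraphs of forests are forests. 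For the "at least two trees" condition, one needs that $\mathcal W$ is not contained in a single component of $\overline{Y}$; this is where I expect to have to do a little work, and it should follow from the size estimates — $|V(\tilde X)| = m \geq 5$ while, crucially, $\tilde X$ is non-Hamiltonian so $m < n$ strictly, combined with the fact that $\overline{Y}$ has at least two trees each of size at least $2$ (when there is no isolated vertex, the reduction already handled by Proposition \ref{isol_vx}). Actually the cleaner route: if $\mathcal W$ lies entirely in one tree $\mathcal T$ of $\overline Y$, then since $\{\sigma(v),\sigma(w)\} \in E(Y)$, i.e. $\sigma(v)$ and $\sigma(w)$ are non-adjacent in $\overline Y$, they are two distinct vertices of $\mathcal T$, and I would argue that the path in $\tilde X$ can be used to move a vertex from outside $\mathcal T$ into the relevant cycle — but this circularity suggests instead directly checking the count: $|\mathcal W| = m$, and if $m \le n-1$ while $\overline Y$ has a component of size $> \lfloor n/2 \rfloor$ and all others summing to $\geq 2$, one still needs $\mathcal W$ to meet two components. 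The honest resolution is that if $\mathcal W \subseteq V(\mathcal T)$ for a single tree $\mathcal T$, then $\overline{Y|_{\mathcal W}}$ has an isolated vertex exactly when some vertex of $\mathcal W$ has all its $\overline Y$-neighbors outside $\mathcal W$; I would handle the case "$\overline{Y|_{\mathcal W}}$ has an isolated vertex" via Proposition \ref{isol_vx} applied to $\FS(\tilde X, Y|_{\mathcal W})$ directly (noting $\tilde X$ is biconnected), and the case "$\overline{Y|_{\mathcal W}}$ is a forest with $\geq 2$ trees" via Lemma \ref{r=1}.

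So the argument would proceed by cases on the induced complement $\overline{Y|_{\mathcal W}}$: either it has an isolated vertex, in which case Proposition \ref{isol_vx} gives connectivity of $\FS(\tilde X, Y|_{\mathcal W})$ and in particular a path from the restriction of $\sigma$ to the restriction of $\sigma'$; or it is a forest with at least two trees, in which case $\tilde X$ (being of the form in Proposition \ref{ear_fold}) and $Y|_{\mathcal W}$ satisfy the exact hypotheses of Lemma \ref{r=1}, yielding the sequence of $(\tilde X, Y|_{\mathcal W})$-friendly swaps from $\sigma|_{V(\tilde X)}$ to $\sigma'|_{V(\tilde X)}$; or — the remaining case — $\overline{Y|_{\mathcal W}}$ is a single tree with no isolated vertex, which I would need to rule out or handle. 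I expect the main obstacle to be precisely this last possibility: showing that one cannot have all of $\sigma(V(\tilde X))$ sitting inside one tree of $\overline Y$ in a way that leaves no isolated vertex in the induced complement. Here I would use that $\tilde X$ is non-Hamiltonian, so $m < n$, hence $\mathcal W \subsetneq V(Y)$; if $\overline Y$ has $\geq 2$ trees and $\mathcal W$ avoids one of them entirely, then since that avoided tree has $\geq 2$ vertices, at least $m \le n - 2$; one then pushes a vertex from a different tree into $V(\tilde X)$ along the cycle structure first (mimicking the "displacement" maneuvers of Lemma \ref{r=1}'s proof, legitimate because such a vertex commutes with everything in $\mathcal W$ that shares no $\overline Y$-edge with it), reducing to the two-tree case. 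Finally, translating the $(\tilde X, Y|_{\mathcal W})$-swap sequence back up through the isomorphism of Proposition \ref{subgraph_lemma} gives the required sequence of $(X, Y)$-friendly swaps from $\sigma$ to $\sigma'$, completing the proof.
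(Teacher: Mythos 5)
Your proposal is correct and follows essentially the same route as the paper: restrict to the one-ear subgraph $\tilde{X} = [P_0, P_1]$ furnished by Proposition \ref{geq2_isom}, apply Lemma \ref{r=1} to $\FS(\tilde{X}, Y|_{\sigma(V(\tilde{X}))})$ when the induced complement has at least two components, and otherwise first displace an element from another component of $\overline{Y}$ into $\tilde{X}$ (without disturbing $\sigma(v)$ and $\sigma(w)$, using that not all outer ears attach at both $v$ and $w$), then apply Lemma \ref{r=1} and undo the displacement. Your separate isolated-vertex case is harmless but redundant, since an isolated vertex in $\overline{Y|_{\mathcal W}}$ (with $|\mathcal W| \geq 5$) already gives at least two components, a situation Lemma \ref{r=1} covers.
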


\begin{proof}
Let $[P_0, P_1, \dots, P_r]$ denote the open ear decomposition of $X$, and let $\Tilde{X}$ denote the biconnected subgraph with open ear decomposition $[P_0, P_1]$. Consider the subgraph $\FS(\Tilde{X}, Y |_{\sigma(\Tilde{X})})$ of $\FS(X, Y)$. If the subgraph $\overline{Y |_{\sigma(\Tilde{X})}}$ of $\overline{Y}$ consists of at least two trees, then we can invoke Lemma \ref{r=1} on $\FS(\Tilde{X}, Y |_{\sigma(\Tilde{X})})$ to interchange $\sigma(v)$ and $\sigma(w)$ to achieve $\sigma'$. If $\overline{Y |_{\sigma(\Tilde{X})}}$ has only one connected component, then there exists a sequence of $(X, Y)$-friendly swaps $\tau$ that moves an element in some other component of $\overline{Y}$ down into $\Tilde{X}$ without moving $\sigma(v)$ and $\sigma(w)$, since either $v$ or $w$ has degree $2$ in $X_0$ (it cannot be that all $P_i, i \geq 2$ have $v$ and $w$ as their two endpoints). The resulting $\sigma \circ \tau$ gives $\overline{Y _{(\sigma \circ \tau)(\Tilde{X})}}$ as a forest with at least two trees: invoke Lemma \ref{r=1} with respect to $\FS(\Tilde{X}, Y |_{\sigma(\Tilde{X})})$ to interchange $\sigma(v)$ and $\sigma(w)$, and perform the sequence of swaps $\tau^{-1}$ to return all other elements to their original positions, achieving $\sigma'$.
\end{proof}

\begin{theorem} \label{main_cor}
Let $Y$ be a graph on $n \geq 3$ vertices such that $\overline{Y}$ is a forest with trees $\mathcal T_1, \dots, \mathcal T_r$ such that $\gcd(|V(\mathcal T_1)|, \dots, |V(\mathcal T_r)|) = 1$. Assume that for all biconnected graphs $X$ on $n$ vertices with at most $r-1$ ears, $\FS(X, Y)$ is connected. Then for any biconnected graph $X$ on $n$ vertices with an open ear decomposition consisting of $r$ ears, $\FS(X, Y)$ is connected.
\end{theorem}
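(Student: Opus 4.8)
The plan is to complete the induction on the number of ears $r$ by reusing the structural reduction established above: realize $X$ as a biconnected graph $X_0$ with $r-1$ ears in which one edge $\{v,w\}$ of the initial cycle has been deleted and two further edges added, apply the inductive hypothesis to $X_0$, and use Lemma \ref{r_geq_2} to simulate inside $\FS(X,Y)$ the swaps across $\{v,w\}$ that are lost in passing from $X_0$ to $X$. Before that, I would dispose of the degenerate cases: if $\overline Y$ has an isolated vertex then $\FS(X,Y)$ is connected by Proposition \ref{isol_vx}, so assume $\overline Y$ has no isolated vertex. Then the coprimality hypothesis forces $\overline Y$ to be a forest with at least two trees, each on at least two vertices, whence $n\geq 5$; this is precisely the regime in which Lemma \ref{r_geq_2} applies. (For $n\leq 4$ the coprimality condition already forces an isolated vertex in $\overline Y$, so nothing further is needed there.)

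Next I would invoke Proposition \ref{geq2_isom} together with the discussion preceding it to fix an open ear decomposition of $X$ whose initial cycle and first ear $[P_0,P_1]$ form a graph of the shape in Proposition \ref{ear_fold}. This exhibits $X$ in the form $(X_0\setminus\{v,w\})\cup\{e_1,e_2\}$, where $X_0$ is biconnected with $r-1$ ears, $\{v,w\}$ is an edge on the initial cycle of $X_0$ (so not a bridge, and $X_0\setminus\{v,w\}$ is connected), and $e_1,e_2$ are the two added edges. Since $X_0\setminus\{v,w\}$ is a spanning subgraph of $X$, every $(X_0\setminus\{v,w\},Y)$-friendly swap is also an $(X,Y)$-friendly swap; as $\FS(X_0,Y)$, $\FS(X_0\setminus\{v,w\},Y)$ and $\FS(X,Y)$ all have vertex set $\mathfrak S_n$, this yields $E(\FS(X_0\setminus\{v,w\},Y))\subseteq E(\FS(X,Y))$.

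For the core argument: the inductive hypothesis applied to $X_0$ (biconnected on $n$ vertices with $r-1$ ears) gives that $\FS(X_0,Y)$ is connected. I would then show that the two endpoints of every edge of $\FS(X_0,Y)$ lie in a common component of $\FS(X,Y)$. Let $\{\sigma,\sigma'\}\in E(\FS(X_0,Y))$, so $\sigma'=\sigma\circ(i\ j)$ with $\{i,j\}\in E(X_0)$ and $\{\sigma(i),\sigma(j)\}\in E(Y)$. If $\{i,j\}\neq\{v,w\}$ then $\{i,j\}\in E(X)$ and the same swap is $(X,Y)$-friendly, so $\{\sigma,\sigma'\}\in E(\FS(X,Y))$. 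If $\{i,j\}=\{v,w\}$ then $\sigma$ and $\sigma'$ differ exactly by transposing the values on positions $v$ and $w$ with $\{\sigma(v),\sigma(w)\}\in E(Y)$, which are the hypotheses of Lemma \ref{r_geq_2}; hence $\sigma$ and $\sigma'$ are joined by a sequence of $(X,Y)$-friendly swaps. Given any $\rho,\rho'\in\mathfrak S_n$, a path from $\rho$ to $\rho'$ in the connected graph $\FS(X_0,Y)$, with each of its edges either kept or replaced by the corresponding $(X,Y)$-path just described, produces a walk from $\rho$ to $\rho'$ in $\FS(X,Y)$. Hence $\FS(X,Y)$ is connected, completing the induction.

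\textbf{Expected main obstacle.} The substantive work is already inside Lemma \ref{r_geq_2} (and below it, Lemma \ref{r=1}); what remains is bookkeeping, and the two places I would take care are: confirming that the reduction genuinely produces a biconnected $X_0$ with exactly $r-1$ ears and with $\{v,w\}$ on its initial cycle, so that both the inductive hypothesis and Lemma \ref{r_geq_2} legitimately apply; and using the spanning-subgraph inclusion in the correct direction, transferring connectivity \emph{from} $\FS(X_0,Y)$ \emph{to} $\FS(X,Y)$ by arguing each edge of the former survives in, or is simulated by a path in, the latter rather than the reverse.
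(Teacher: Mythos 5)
Your proposal matches the paper's proof: the paper likewise realizes $X$ as an $(r-1)$-ear biconnected graph $X_0$ with the edge $\{v,w\}$ of its initial cycle deleted and two edges added, applies the inductive hypothesis to conclude $\FS(X_0,Y)$ is connected, and uses Lemma \ref{r_geq_2} to reconnect the endpoints of the lost swaps across $\{v,w\}$. Your additional bookkeeping (disposing of the isolated-vertex case so that $n\geq 5$ and $\overline Y$ has at least two nontrivial trees, and checking that the remaining edges of $\FS(X_0,Y)$ survive in $\FS(X,Y)$) is correct and, if anything, makes explicit steps the paper leaves implicit.
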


\begin{proof}
Any graph $X$ with $r$ ears is understood as constructed by taking a particular $(r-1)$-ear biconnected graph $X_0$, removing the edge $\{v, w\}$ and adding two more edges to the initial cycle. Any edge $\{\sigma, \sigma'\} \in E(\FS(X_0, Y))$ not in $E(\FS(X, Y))$ must have $\sigma(v) = \sigma'(w)$, $\sigma(w) = \sigma'(v)$, and $\sigma(i) = \sigma'(i)$ for all $i \in V(X) \setminus \{v, w\}$. Lemma \ref{r_geq_2} shows that any such $\sigma, \sigma'$ are connected in $\FS(X, Y)$, so that $\FS(X, Y)$ is connected.
\end{proof}

\noindent Invoking Theorem \ref{main_cor} inductively on $r$ completes the proof of Theorem $\ref{conjec_7.1}$.

\section{Girth of $\FS(X, Y)$}

We now study the notion of girth in $\FS(X, Y)$, which corresponds to a sequence of $(X, Y)$-friendly swaps such that we start and end in the same configuration. In what follows, we shall be motivated towards an exact characterization of the girth of $\FS(X, Y)$ in terms of the structure of the graphs $X$ and $Y$.

\subsection{Basic Properties + Problem Setup}

The graph $\FS(X, Y)$ is bipartite, so $g(\FS(X,Y))$ is even if finite. As remarked in \cite{defant2020friends}, if $X$ and $Y$ both have two disjoint edges, it follows that $g(\FS(X, Y)) = 4$. We make a stronger statement in this direction.

\begin{proposition} \label{girth_4}
$g(\FS(X, Y)) = 4$ if and only if $X$ and $Y$ each have at least two disjoint edges, or both contain $K_3$ as a subgraph.
\end{proposition}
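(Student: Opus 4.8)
The plan is to translate a $4$-cycle of $\FS(X,Y)$ into an identity among the four transpositions labelling its edges, and then to read off the required structure of $X$ and $Y$ from the possible such identities.

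For the ``if'' direction I would argue as follows. Suppose first that $X$ has disjoint edges $\{a,b\},\{c,d\}$ and $Y$ has disjoint edges $\{p,q\},\{r,s\}$. Choosing any bijection $\sigma$ with $\sigma(a)=p$, $\sigma(b)=q$, $\sigma(c)=r$, $\sigma(d)=s$, the four bijections $\sigma$, $\sigma\circ(a\,b)$, $\sigma\circ(a\,b)(c\,d)$, $\sigma\circ(c\,d)$ are pairwise distinct and form a $4$-cycle in $\FS(X,Y)$, since $(a\,b)$ and $(c\,d)$ commute and each swap only involves positions inside one of the two disjoint $X$-edges (so the $Y$-adjacency condition is met at every step). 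If instead $X$ and $Y$ both contain $K_3$, then by Proposition \ref{subgraph_lemma} the graph $\FS(K_3,K_3)$ embeds as a subgraph of $\FS(X,Y)$; and $\FS(K_3,K_3)$ is the Cayley graph of $\mathfrak S_3$ on its three transpositions, i.e.\ $K_{3,3}$, which contains a $4$-cycle. In either case $g(\FS(X,Y))\le 4$, and since $\FS(X,Y)$ is bipartite (Proposition \ref{basic_props}) every cycle has even length $\ge 4$, so $g(\FS(X,Y))=4$.

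For the ``only if'' direction, suppose $g(\FS(X,Y))=4$ and fix a $4$-cycle $\sigma_0\sim\sigma_1\sim\sigma_2\sim\sigma_3\sim\sigma_0$, writing $\sigma_i=\sigma_{i-1}\circ t_i$ (indices mod $4$), where $t_i=(a_i\,b_i)$ is a transposition with $\{a_i,b_i\}\in E(X)$ and $\{\sigma_{i-1}(a_i),\sigma_{i-1}(b_i)\}\in E(Y)$. Composing around the cycle gives $t_1t_2t_3t_4=\mathrm{id}$, hence $t_1t_2=t_4t_3$; and the $\sigma_i$ being pairwise distinct forces cyclically consecutive transpositions to differ, in particular $t_1\ne t_2$, so $t_1t_2$ is either a product of two disjoint transpositions or a $3$-cycle. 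If $t_1,t_2$ are disjoint, then $t_1t_2$ is a $(2,2)$-element, which admits a unique factorization into two disjoint transpositions; this forces $\{t_3,t_4\}=\{t_1,t_2\}$, and $t_2\ne t_3$ then gives $t_3=t_1$, $t_4=t_2$. Now $t_1,t_2$ yield two disjoint edges of $X$, and since the second swap does not move the positions in $t_2$, the $Y$-conditions on the first two edges say precisely that $\sigma_0$ maps these two disjoint $X$-edges to two (necessarily disjoint) edges of $Y$. If instead $t_1,t_2$ share exactly one point, their product is a $3$-cycle on a $3$-element vertex set $S$, and since $t_4t_3$ equals the same $3$-cycle, all four $t_i$ are transpositions supported inside $S$. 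Hence the entire $4$-cycle lies in the subgraph of $\FS(X,Y)$ fixing every position outside $S$, which is isomorphic to $\FS(X|_S,\,Y|_{\sigma_0(S)})$; moreover $X|_S$ contains the two distinct edges given by $t_1,t_2$, and tracking the first two swaps shows $Y|_{\sigma_0(S)}$ also contains two distinct edges. Since a $3$-vertex graph with at least two edges is either $\Path_3$ or $K_3$, and since $\FS(\Path_3,\Path_3)$ is a forest while $\FS(\Path_3,K_3)$ and $\FS(K_3,\Path_3)$ are both $6$-cycles (a short finite check, using Proposition \ref{basic_props} for the last isomorphism), both $X|_S$ and $Y|_{\sigma_0(S)}$ must equal $K_3$; so $X$ and $Y$ each contain $K_3$.

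The step I expect to be the main obstacle is the $3$-cycle case of the converse: making the reduction to the induced subgraph $\FS(X|_S,Y|_{\sigma_0(S)})$ precise, and handling the left-versus-right composition convention carefully when tracking which value occupies which position along the cycle. An alternative that avoids the external girth computation is a direct case analysis of the three ways a $3$-cycle factors into two transpositions: using the distinctness relations among the $t_i$, this forces all three of $(xy),(yz),(xz)$ to appear among $t_1,\dots,t_4$ and all three pairs of values in $\sigma_0(S)$ to be $Y$-adjacent, at the cost of somewhat more bookkeeping. Everything else — the two constructions in the ``if'' direction and the $(2,2)$ case — should be routine.
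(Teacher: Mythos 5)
Your proof is correct, and it takes a genuinely different route from the paper's. The paper handles the converse by contrapositive on $X$ alone: if $X$ has neither two disjoint edges nor a triangle, it is a star plus isolated vertices, so both neighbors of a vertex $\sigma_1$ on a $4$-cycle arise by moving the occupant of the center to distinct positions, and the cycle cannot close; it then invokes $\FS(X,Y)\cong\FS(Y,X)$ to get the same conclusion for $Y$. As written, that only yields ``($X$ has two disjoint edges or a $K_3$) and ($Y$ has two disjoint edges or a $K_3$),'' which is formally weaker than the stated disjunction --- it does not by itself exclude, say, $X=\Cycle_4$ paired with $Y$ equal to $K_3$ plus an isolated vertex. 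Your analysis of the relation $t_1t_2t_3t_4=\mathrm{id}$, split according to whether $t_1$ and $t_2$ are disjoint or overlap in one point, tracks the structure of $X$ and $Y$ simultaneously and therefore proves the proposition exactly as stated; the price is the finite check on three-vertex friends-and-strangers graphs, which the paper already records in Remark \ref{n=3}. Two small touch-ups: in the $(2,2)$ case, note explicitly that $t_3$ and $t_4$ must themselves be disjoint (two overlapping transpositions multiply to a $3$-cycle or the identity, never to an element of type $(2,2)$) before appealing to uniqueness of the factorization; and ``the second swap does not move the positions in $t_2$'' should read ``the first swap.''
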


\begin{proof}
The result is clear when $X$ and $Y$ both have $K_3$ as a subgraph. For the converse, assume $X$ does not have two disjoint edges nor $K_3$ as a subgraph, so $X$ is the disjoint union of a star graph and a (possibly empty) collection of isolated vertices. Consider a cycle $\mathcal C$ of size $4$ in $\FS(X, Y)$, with vertices $V(\mathcal C) = \{\sigma_1, \sigma_2, \sigma_3, \sigma_4\}$ (as ordered in $\mathcal C$); assume $\sigma_1(v)$ denotes the mapping of the center $v \in V(X)$ of the star. $\sigma_2$ and $\sigma_4$ follow from $\sigma_1$ by swapping $\sigma_1(v)$ with distinct positions, from which we cannot construct $\sigma_3$ completing $\mathcal C$. Thus, $X$ has two disjoint edges or $K_3$ as a subgraph; since $\FS(X,Y) \cong \FS(Y,X)$, so does $Y$. 
\end{proof}

\begin{remark} \label{n=3}
If $n=3$, it is easy to check that either $g(\FS(X,Y)) = 4$ (when $X = Y = K_3$), $g(\FS(X,Y)) = 6$ (when either $X$ or $Y$ is $K_3$, and the other is $K_3$ with one edge removed; here $\FS(X,Y) \cong \Cycle_6$), or $\FS(X,Y)$ is acyclic (when neither of the above two cases hold).
\end{remark}

\noindent Henceforth assume $n \geq 4$ and $g(\FS(X, Y)) \geq 6$, so that (without loss of generality) $Y$ does not have two disjoint edges, yielding $Y$ as the disjoint union of a star graph and a collection of isolated vertices. By Proposition \ref{fs_decomp} and mentioned in \cite{defant2020friends}, it suffices to consider the setting where $X$ is connected and $Y = \Star_n$.

\begin{proposition} \label{inf_girth}
Let $X$ be acyclic ($g(X) = \infty$). Then $\FS(X, \Star_n)$ is also acyclic ($g(\FS(X, \Star_n)) = \infty$).
\end{proposition}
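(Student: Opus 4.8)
The plan is to argue by contradiction: suppose $X$ is acyclic but $\FS(X, \Star_n)$ contains a cycle, and derive a contradiction by extracting a cycle in $X$ itself. Fix a shortest cycle $\mathcal C = (\sigma_0, \sigma_1, \dots, \sigma_k = \sigma_0)$ in $\FS(X, \Star_n)$, where $k \geq 4$ by Proposition \ref{girth_4} (and in fact $k \geq 6$ under our standing assumptions, though we do not need this). The key observation is that, since $Y = \Star_n$, every $(X, \Star_n)$-friendly swap must involve the vertex of $X$ currently occupied by the center $c$ of the star: each step $\sigma_{i-1} \to \sigma_i$ transposes $\sigma_{i-1}$ along an edge $\{a_i, b_i\} \in E(X)$ whose two mappings are adjacent in $\Star_n$, and adjacency in $\Star_n$ forces one of those two mappings to be $c$.

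Next I would track the trajectory of the center. Let $p_i \in V(X)$ be the vertex with $\sigma_i(p_i) = c$. The preceding observation says that for each $i$, the swap at step $i$ moves $c$ from $p_{i-1}$ to $p_i$ along an edge of $X$ — in particular $\{p_{i-1}, p_i\} \in E(X)$ — or the swap does not move $c$ at all, i.e.\ $p_{i-1} = p_i$; but the latter is impossible, since a friendly swap changes the bijection, and the only way to change it without moving $c$ along an edge is... actually every friendly swap in $\FS(X,\Star_n)$ moves $c$, because the transposed pair consists of $c$ and one non-central vertex, and $c$ sits on one of the two swapped positions, so $c$'s position genuinely changes each step. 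Hence $p_0, p_1, \dots, p_k = p_0$ is a closed walk in $X$, and moreover no two consecutive edges of this walk coincide: if the step $i$ swap and the step $i+1$ swap used the same edge $\{a,b\}$ of $X$, then (since in both steps $c$ is on one endpoint and the same other vertex is on the other) $\sigma_{i+1} = \sigma_{i-1}$, contradicting that $\mathcal C$ is a cycle (of length $\geq 4$, so $\sigma_{i-1} \neq \sigma_{i+1}$). Thus $p_0 p_1 \cdots p_k$ is a closed walk in $X$ in which no edge is immediately repeated.

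A closed walk of positive length in a tree (more generally a forest) must repeat some edge immediately — this is the standard fact that every closed walk in a forest backtracks. So to finish I only need to confirm the walk has positive length, i.e.\ $k \geq 1$, which is immediate since $k \geq 4$. This yields the contradiction and shows $\FS(X, \Star_n)$ is acyclic. The one point requiring a little care — and the main (though minor) obstacle — is the claim that consecutive steps cannot use the same edge of $X$; I would spell this out by writing $\sigma_i = \sigma_{i-1} \circ (a\ b)$ and $\sigma_{i+1} = \sigma_i \circ (a'\ b')$ with $\{a,b\} = \{a',b'\}$ as sets, and checking that $(a\ b) = (a'\ b')$ as transpositions, so $\sigma_{i+1} = \sigma_{i-1}$, contradicting that the six-or-more vertices of $\mathcal C$ are distinct. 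Everything else is bookkeeping about which vertex carries the center $c$.
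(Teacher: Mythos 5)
Your proof is correct. It rests on the same central observation as the paper's --- every $(X,\Star_n)$-friendly swap moves the center of the star along an edge of $X$, so a cycle in $\FS(X,\Star_n)$ forces the center to trace a closed walk in $X$ --- but you finish differently. The paper forms the induced subgraph $X|_{\mathcal S}$ on the set of positions the center visits, asserts that it is connected with minimum degree at least $2$, and invokes the standard fact that such a graph contains a cycle; notably, it asserts the degree condition ``directly from construction'' without justification. You instead keep the walk itself, prove that it never immediately repeats an edge (since two consecutive swaps along the same edge of $X$ would give $\sigma_{i+1}=\sigma_{i-1}$, impossible in a cycle with distinct vertices), and invoke the fact that every closed walk of positive length in a forest must backtrack. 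The non-backtracking argument is exactly the content hiding behind the paper's unproved claim that $\delta(X|_{\mathcal S})\ge 2$, so your version is the more self-contained of the two; the paper's version is slightly slicker if one is willing to take the degree bound on faith. One trivial nit: you cite Proposition \ref{girth_4} for $k\ge 4$, but the relevant fact is bipartiteness of $\FS(X,Y)$ (Proposition \ref{basic_props}), and in any case $k\ge 3$ is all your argument needs.
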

\begin{proof}
We prove the contrapositive, or $g(\FS(X, \Star_n)) < \infty \implies g(X) < \infty$. Take some cycle subgraph $\mathcal C$ of $\FS(X, \Star_n)$, and define the set $\mathcal S$ to be all indices $i \in [n]$ such that there exist $\sigma, \tau \in V(\mathcal C)$ with $\sigma(i) \neq \tau(i)$ (in other words, $\mathcal S$ consists of all vertices in $X$ that are ``involved" in the construction of the cycle $\mathcal C$ in $\FS(Y, \Star_n)$; in particular, $\mathcal S$ corresponds precisely to all vertices $i \in V(X)$ for which $\sigma(i) = n$, or the center of the star graph, at some permutation $\sigma \in V(\mathcal C)$). 
From here, fix some arbitrary $\sigma \in \mathcal S$, and consider the induced subgraphs $X |_{\mathcal S}$ of $X$. It follows directly from construction of the set $\mathcal S$ that the induced subgraph $X |_{\mathcal S}$ of $X$ is connected and $\delta(X |_{\mathcal S}) \geq 2$. It is well-known that any such graph (connected and with minimal degree at least $2$) necessarily contains a cycle subgraph, so we conclude that $g(X) < \infty$.
\end{proof} 

\noindent Proposition \ref{inf_girth} shows that $\FS(X, \Star_n)$ is acyclic whenever $X$ is acyclic. It should thus seem natural to proceed by considering the number of cycle subgraphs of $X$. Indeed, in the case that $X$ has precisely one cycle subgraph, we can achieve some immediate statements.

\begin{lemma} \label{star_cycle}
Every connected component of $\FS(\Cycle_n, \Star_n)$ is isomorphic to $\Cycle_{n(n-1)}$.
\end{lemma}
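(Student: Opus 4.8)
The plan is to understand $\FS(\Cycle_n, \Star_n)$ very explicitly. A vertex of $\Cycle_n$ is a position arranged in a circle, and a vertex of $\Star_n$ is a person; all people are strangers except that everyone is a friend of the central person (call it $\star$). Since a friendly swap along an edge $\{i,i+1\}$ of $\Cycle_n$ requires the two people on those positions to be adjacent in $\Star_n$, one of them must be $\star$. Hence every friendly swap simply slides $\star$ one step clockwise or counterclockwise around the cycle, and all other people stay fixed. So from a given bijection $\sigma$, the two neighbors of $\sigma$ in $\FS(\Cycle_n,\Star_n)$ are obtained by moving $\star$ to one of its two neighboring positions; in particular $\FS(\Cycle_n,\Star_n)$ is $2$-regular, so every connected component is a cycle $\Cycle_k$ for some $k$, and it remains only to show $k = n(n-1)$ in each component.

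For the length computation, I would track a single component $\mathcal{C}$ containing a bijection $\sigma_0$. Walking along $\mathcal{C}$ and always moving $\star$ in the same rotational direction (say clockwise), after $n$ consecutive swaps $\star$ has made one full loop and returned to its starting position; but in doing so each of the other $n-1$ people has been shifted exactly one position counterclockwise relative to their original slots (each was displaced once, as $\star$ passed through). Thus $n$ clockwise moves of $\star$ realize the permutation that is a single $(n-1)$-cycle on the non-central people (rotating them by one step), while fixing the position of $\star$. Call this bijection $\sigma_1$; it lies in $\mathcal{C}$ and $\sigma_1 \neq \sigma_0$ since $n-1 \geq 2$. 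Iterating, after $j$ such blocks of $n$ clockwise moves we are at the bijection obtained from $\sigma_0$ by rotating the non-central people by $j$ steps, and we first return to $\sigma_0$ exactly when $j = n-1$. Hence the closed walk of length $n(n-1)$ obtained by moving $\star$ clockwise around the cycle $n-1$ full times returns to $\sigma_0$, and no proper sub-walk of it does; one should also check that within a single block of $n$ moves no intermediate bijection equals $\sigma_0$ (it cannot, since $\star$ has not completed a loop, so it sits on a different position). Therefore this closed walk traverses a cycle of length exactly $n(n-1)$ inside the $2$-regular component $\mathcal{C}$, which forces $\mathcal{C} \cong \Cycle_{n(n-1)}$. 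Finally, $|V(\FS(\Cycle_n,\Star_n))| = n!$ and each component has size $n(n-1)$, consistent with there being $(n-2)!$ components.

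The main subtlety — the step I expect to require the most care — is the bookkeeping that identifies "$n$ clockwise moves of $\star$" with "a one-step rotation of the other $n-1$ people," and the accompanying argument that the walk is simple (no repeats before step $n(n-1)$): one must argue both that no two bijections within a length-$n$ block coincide (distinguished by the position of $\star$, or by the partial rotation) and that the block-permutation has order exactly $n-1$. Everything else — $2$-regularity and hence that components are cycles — is immediate from the structure of $\Star_n$.
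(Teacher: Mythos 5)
Your proposal is correct and takes essentially the same approach as the paper: the paper defines the map $\varphi(i)$ as the bijection obtained by sliding the central vertex rightward $i$ times and asserts it is an isomorphism, which is exactly the walk you construct. Your write-up supplies the details the paper leaves implicit (the $2$-regularity forcing each component to be a cycle, and the order-$(n-1)$ rotation argument pinning the length at $n(n-1)$), so it is, if anything, a more complete version of the same argument.
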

\begin{proof}
Let $\sigma = \sigma(1)\dots \sigma(n)$ be in $\mathfrak S_n$, and consider the component $\mathcal C$ of $\FS(\Cycle_n, \Star_n)$ with $\sigma \in V(\mathcal C)$. Without loss of generality, say $\sigma(1) = n$, the central vertex of $\Star_n$ ($\mathcal C$ must have such a permutation). Let $[n(n-1)] = \{1, 2, \dots, n(n-1)\}$ denote the vertex set of $\Cycle_{n(n-1)}$, and define $\varphi: V(\Cycle_{n(n-1)}) \to V(\mathcal C)$ by defining $\varphi(i)$ to be the permutation achieved by starting from $\sigma$ and swapping $\sigma(1) = n$ rightward $i$ times (for example, $\varphi(1) = \sigma(2)\sigma(1)\dots \sigma(n)$). It follows easily that $\varphi$ is a graph isomorphism.
\end{proof}

\begin{proposition} \label{quad_bd}
If $g(X) < \infty$, we have $g(\FS(X, \Star_n)) \leq g(X) \cdot (g(X)-1)$.
\end{proposition}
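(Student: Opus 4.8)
The plan is to realize a short cycle in $\FS(X, \Star_n)$ explicitly by confining all of the action to a single shortest cycle subgraph of $X$. Let $\mathcal C$ be a cycle subgraph of $X$ with $|V(\mathcal C)| = g(X) =: g$. By Proposition \ref{subgraph_lemma} (applied with $\tilde X = X$, $\tilde Y = \Star_n$, and $Y = \Star_g$ the star on the $g$ vertices consisting of $V(\mathcal C)$ together with a designated center placed among them), there is a subgraph of $\FS(X, \Star_n)$ isomorphic to $\FS(\mathcal C, \Star_g)$: one fixes any bijection $\psi$ on the complementary $n - g$ vertices, and in particular one can insist that the center of $\Star_n$ starts on a vertex of $\mathcal C$ so that $\psi$ is consistent with this. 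Here I am using that $\mathcal C \cong \Cycle_g$ and that the induced star on $g$ vertices is $\Star_g$, so $\FS(\Cycle_g, \Star_g)$ embeds into $\FS(X, \Star_n)$.

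Now apply Lemma \ref{star_cycle}: every connected component of $\FS(\Cycle_g, \Star_g)$ is isomorphic to $\Cycle_{g(g-1)}$. Hence this component, viewed inside $\FS(X, \Star_n)$ via the embedding above, is a cycle subgraph of $\FS(X, \Star_n)$ of length exactly $g(g-1) = g(X)(g(X) - 1)$. Since $g(\FS(X, \Star_n))$ is the length of the \emph{shortest} cycle subgraph, we conclude $g(\FS(X, \Star_n)) \le g(X)(g(X) - 1)$, which is the claim. I should note that $g(X) < \infty$ is exactly what guarantees the cycle subgraph $\mathcal C$ exists, and that $g \ge 3$, so $g(g-1) \ge 6$ is consistent with the standing assumption $g(\FS(X,\Star_n)) \ge 6$ in this subsection.

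The only genuine subtlety — and the step I would be most careful about — is checking that the copy of $\FS(\Cycle_g, \Star_g)$ sitting inside $\FS(X, \Star_n)$ really is an \emph{induced-enough} subgraph for a cycle in it to remain a cycle (not just a closed walk) in the ambient graph. This is fine: Proposition \ref{subgraph_lemma} gives a subgraph isomorphism, so a cycle subgraph maps to a cycle subgraph; edges of the ambient graph not present in $\FS(\Cycle_g, \Star_g)$ can only shorten the girth, never invalidate the bound we are proving. One should also confirm that the embedded component is nonempty and genuinely a single component of $\FS(\Cycle_g, \Star_g)$ — this is immediate since $\FS(\Cycle_g,\Star_g)$ is a disjoint union of copies of $\Cycle_{g(g-1)}$, each of which is itself a cycle subgraph. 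So there are no real obstacles; the argument is essentially a clean application of Proposition \ref{subgraph_lemma} and Lemma \ref{star_cycle}.
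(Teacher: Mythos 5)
Your proposal is correct and follows essentially the same route as the paper: take a shortest cycle $\mathcal C \cong \Cycle_{g(X)}$ in $X$, embed $\FS(\mathcal C, \Star_{g(X)})$ into $\FS(X, \Star_n)$ via Proposition \ref{subgraph_lemma}, and invoke Lemma \ref{star_cycle} to exhibit a cycle of length $g(X)(g(X)-1)$. The extra care you take about the embedding being a genuine subgraph isomorphism (so cycles map to cycles) is a reasonable detail the paper leaves implicit, but it does not change the argument.
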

\begin{proof}
Let $\mathcal C \cong \Cycle_{g(X)}$ be a cycle subgraph in $X$. We have that $\FS(\mathcal C, \Star_{g(X)})$ is a subgraph of $\FS(X, \Star_n)$, from which Lemma \ref{star_cycle} gives the desired upper bound on $g(\FS(X, \Star_n))$. 
\end{proof}

\noindent Combining Proposition \ref{inf_girth} and Corollary \ref{quad_bd} yields the following.

\begin{corollary} \label{girth_inf}
$g(\FS(X, \Star_n)) = \infty$ if and only if $g(X) = \infty$.
\end{corollary}

\subsubsection{Main Problem}

\noindent The authors of \cite{defant2020friends} left the characterization of the girth of $\FS(X, \Star_n)$ for connected graphs $X$ open-ended. Toward this, we pose the following problem, and shall make substantial progress towards resolving it.

\begin{problem} \label{girth_problem}
Find a precise description of the set of simple graphs $\mathcal G$ with finite girth such that for all $n \geq 3$, the following two statements hold.
\begin{itemize}
    \item Any $n$-vertex connected graph $X$ with $g(X) < \infty$ has that $g(\FS(X, \Star_n)) = g(\FS(\Tilde{X}, \Star_m))$ for some subgraph $\Tilde{X}$ of $X$ that is in $\mathcal G$ on $m \leq n$ vertices.
    \item If $X \in \mathcal G$, the only subgraph $\tilde{X} \in \mathcal G$ of $X$ satisfying $g(\FS(X, \Star_n)) = g(\FS(\Tilde{X}, \Star_m))$ is $\tilde{X} = X$ itself.
\end{itemize}
\end{problem}
We verify that this problem is well-posed by showing that $\mathcal G$ must be the set of all simple graphs with finite girth such that any $X \in \mathcal G$ (say on $n$ vertices) contains no proper subgraph $\tilde{X}$ on $m \leq n$ vertices with $g(\FS(X, \Star_n)) = g(\FS(\Tilde{X}, \Star_m))$ (in particular, notice that all graphs in $\mathcal G$ must be connected).\footnote{Although these are equivalent descriptions of $\mathcal G$, we pose the problem as above to emphasize that what we are fundamentally concerned with are the necessary trajectories the central vertex of $\Star_n$ executes around a graph $X$ possessing finite girth to achieve $g(\FS(X, \Star_n))$. We shall, however, appeal to this second characterization of $\mathcal G$ later in the article.} For any $n$-vertex graph $X$ with $g(X) < \infty$, either $X \in \mathcal G$ or there exists a proper subgraph $X'$ of $X$ on $m \leq n$ vertices such that $g(\FS(X, \Star_n)) = g(\FS(X', \Star_m))$. Continue similarly on $X'$ until this process necessarily bottoms out to a subgraph $\tilde{X}$ of $X$ that is contained in $\mathcal G$, so that $\mathcal G$ satisfies the requirements of Problem \ref{girth_problem}. We also see by this discussion that this constitutes \textit{the} set of graphs $\mathcal G$; certainly, any set $\mathcal G'$ of graphs satisfying the criteria of Problem \ref{girth_problem} must contain $\mathcal G$ (consider taking $X \in \mathcal G$ in Problem \ref{girth_problem}), and including any other graph in $\mathcal G'$ with finite girth that fails to be in $\mathcal G$ breaks the latter part of the statement. 

We now motivate why this problem is a natural one by observing that inherently tied to an improved understanding of $\mathcal G$ is the trajectory the central vertex in $\Star_m$ takes on a given subgraph $\tilde{X} \in \mathcal G$ to achieve $g(\FS(\tilde{X}, \Star_m))$. Indeed, our guiding aim in posing this problem is to show that not many such trajectories are necessary to characterize the girth of any such $\FS(X, \Star_n)$. The following definition corresponds to the subgraph of $X$ the central vertex of $\Star_n$ ``walks along" during a path in $\FS(X, \Star_n)$.

\begin{definition} \label{path_induced}
Let $\mathcal P = \{\sigma_i\}_{i=0}^\lambda \subset V(\FS(X, \Star_n))$ be a path in $\FS(X, \Star_n)$ ($\{\sigma_{i-1}, \sigma_i\} \in E(\FS(X, \Star_n))$ for all $i \in [\lambda]$). The \textbf{$\mathcal P$-induced subgraph of $X$}, denoted $X_{\mathcal P}$, is such that the following hold. 
\begin{itemize}
    \item $v \in V(X_{\mathcal P})$ if and only if there exists $0 \leq i \leq \lambda$ such that $\sigma_i(v) = n$.
    \item $\{v, w\} \in E(X_{\mathcal P})$ if and only if there exists $i \in [\lambda]$ such that $\sigma_{i-1}(v) = \sigma_i(w)$ and $\sigma_{i-1}(w) = \sigma_i(v)$.
\end{itemize}
\end{definition}
\noindent Now let $\mathcal C = \{\sigma_i\}_{i=0}^\lambda \subset V(\FS(X, \Star_n))$ be a cycle in $\FS(X, \Star_n)$ achieving the girth, so $\lambda = |V(\mathcal C)| = g(\FS(X, \Star_n))$, $\{\sigma_{i-1}, \sigma_i\} \in E(\FS(X, \Star_n))$ for $i \in [\lambda]$, and $\sigma_0 = \sigma_\lambda$: it follows that $g(\FS(X, \Star_n)) = g(\FS(X_{\mathcal C}, \Star_{|V(X_{\mathcal C})|}))$, and that $X_C$ is connected, $g(X_{\mathcal C}) < \infty$, and $\delta(X_{\mathcal C}) \geq 2$: these properties thus extend to any graphs in $\mathcal G$. We can thus intuitively understand the demands of Problem \ref{girth_problem} as determining the smallest collection of trajectories, represented by the corresponding subgraphs in $\mathcal G$ where all vertices and edges are traversed, that are executed by the central vertex of $\Star_n$ along cycles in graphs $\FS(X, \Star_n)$.

It follows immediately from how $\mathcal G$ was defined that for any $X$ with $g(X) < \infty$, $g(\FS(X, \Star_n))$ is precisely the minimum of $g(\FS(\tilde{X}, \Star_m))$ over all subgraphs $\tilde{X}$ of $X$ (on $m \leq n$ vertices) with $\tilde{X} \in \mathcal G$. As such, we seek a concrete description of the set $\mathcal G$, determining the types of subgraphs $n \in V(\Star_n)$ must traverse to achieve the girth.

\subsection{Subset of $\mathcal G$}

To begin our investigation of $\mathcal G$, we shall initially concern ourselves with three families of graphs, which should not seem arbitrary: barbell graphs take two cycle graphs and connect them by some (possibly trivial) path, while theta graphs take two cycle graphs and ``conjoin them" along a path. In particular, all these examples are connected, with properties $\delta(X) \geq 2$ and $0 \leq |E(X)| - |V(X)| \leq 1$.

\begin{enumerate}
    \item $X$ is a \textbf{cycle graph}, or $X = \Cycle_n$.
    \item $X$ is a \textbf{barbell graph}, which we define here as a graph $X$ that can be decomposed into three subgraphs $[\mathcal C_1, \mathcal C_2, \mathcal P]$ with $E(X) = E(\mathcal C_1) \sqcup E(\mathcal C_2) \sqcup E(\mathcal P)$ and such that one of two possibilities hold.
    \begin{itemize}
        \item $\mathcal C_1$ and $\mathcal C_2$ are vertex-disjoint cycles, and $\mathcal P$ is a path with inner vertices disjoint from $V(\mathcal C_1)$ and $V(\mathcal C_2)$, while the endpoints $v, w$ of $\mathcal P$ have $v \in V(\mathcal C_1)$, $w \in V(\mathcal C_2)$. 
        \item $\mathcal C_1$ and $\mathcal C_2$ are cycles that intersect at exactly one vertex, and $\mathcal P$ is a trivial path consisting of precisely this vertex.
    \end{itemize}
    \item $X$ is a biconnected graph with $r=1$ (henceforth called a \textbf{$\theta$-graph}, following \cite{wilson1974graph}).
\end{enumerate}

\subsubsection{Graphs with One Cycle Subgraph}

\begin{proposition} \label{one_cycle_subgph}
Let $X$ be a connected graph on $n$ vertices with precisely one cycle subgraph $\mathcal C$, and say $\mathcal C \cong \Cycle_k$ for some $k \leq n$. Then $g(\FS(X, \Star_n)) = k(k-1)$.
\end{proposition}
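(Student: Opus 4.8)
The plan is to show the two inequalities $g(\FS(X,\Star_n)) \le k(k-1)$ and $g(\FS(X,\Star_n)) \ge k(k-1)$ separately. The upper bound is immediate: since $\mathcal{C} \cong \Cycle_k$ is a subgraph of $X$, Proposition \ref{subgraph_lemma} gives that $\FS(\Cycle_k, \Star_k)$ embeds as a subgraph of $\FS(X, \Star_n)$, and by Lemma \ref{star_cycle} every component of $\FS(\Cycle_k, \Star_k)$ is a $\Cycle_{k(k-1)}$, so $g(\FS(X,\Star_n)) \le k(k-1)$. This is just Proposition \ref{quad_bd} applied with $g(X) = k$ (note that $X$ having exactly one cycle subgraph forces $g(X) = k$).

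For the lower bound, the key structural fact is that $X$ having precisely one cycle subgraph means $X$ is $\mathcal{C}$ with a forest hanging off it: every edge of $X$ not on $\mathcal{C}$ is a bridge, and $X \setminus E(\mathcal{C})$ is a disjoint union of trees rooted at the vertices of $\mathcal{C}$. I would take an arbitrary cycle $\mathcal{D} = \{\sigma_i\}_{i=0}^{\lambda}$ in $\FS(X,\Star_n)$ of length $\lambda = g(\FS(X,\Star_n))$ and consider its induced subgraph $X_{\mathcal{D}}$ in the sense of Definition \ref{path_induced}. As noted in the text, $X_{\mathcal{D}}$ is connected with $\delta(X_{\mathcal{D}}) \ge 2$, so it must contain a cycle subgraph; but $X$ has only one cycle subgraph, so $X_{\mathcal{D}} \supseteq \mathcal{C}$, and since $\delta(X_{\mathcal{D}}) \ge 2$ forces $X_{\mathcal{D}}$ to have no leaves, in fact $X_{\mathcal{D}} = \mathcal{C}$ exactly (any vertex of a hanging tree that is included would drag in a leaf). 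Hence all swaps recorded by $\mathcal{D}$ occur along edges of $\mathcal{C}$, and the center vertex $n$ of $\Star_n$ never leaves the vertex set $V(\mathcal{C})$; moreover no vertex outside $V(\mathcal{C})$ ever moves. So $\mathcal{D}$ is in fact a cycle in the subgraph $\FS(\mathcal{C}, \Star_n)$ — more precisely, restricting to the $k$ values placed on $V(\mathcal{C})$ together with the center, $\mathcal{D}$ descends to a closed walk, and indeed a cycle, in $\FS(\Cycle_k, \Star_k)$. By Lemma \ref{star_cycle} that graph is a disjoint union of copies of $\Cycle_{k(k-1)}$, whose girth is $k(k-1)$, giving $\lambda \ge k(k-1)$.

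The step I expect to require the most care is the claim that $X_{\mathcal{D}} = \mathcal{C}$ exactly, i.e. that the cycle $\mathcal{D}$ cannot ``use'' any bridge edge of $X$. The subtlety is that the center vertex could in principle wander out along a hanging tree and come back; one must argue that doing so along a bridge $\{v,w\}$ (with $w$ the side away from $\mathcal C$) forces $w \in V(X_{\mathcal{D}})$ with $\deg_{X_{\mathcal{D}}}(w) \ge 2$, which propagates outward to a leaf of the finite tree and contradicts $\delta(X_{\mathcal{D}}) \ge 2$ — this is exactly the ``connected, min degree $\ge 2$, contains no cycle off $\mathcal C$'' contradiction, and it needs to be spelled out that the forest structure of $X \setminus E(\mathcal C)$ makes this work. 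Once that is nailed down, the reduction to $\FS(\Cycle_k, \Star_k)$ and the appeal to Lemma \ref{star_cycle} are routine, and combining with the upper bound yields $g(\FS(X,\Star_n)) = k(k-1)$.
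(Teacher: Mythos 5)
Your proposal is correct and follows essentially the same route as the paper: both decompose $X$ as the cycle $\mathcal C$ with a forest of hanging trees, take a girth-achieving cycle $\mathcal D$ in $\FS(X,\Star_n)$, use connectivity and $\delta(X_{\mathcal D})\ge 2$ together with iterated leaf-pruning to force $X_{\mathcal D}=\mathcal C$, and then appeal to Lemma \ref{star_cycle} (with the upper bound from Proposition \ref{quad_bd}). The step you flag as delicate is exactly the one the paper handles by the same pruning argument, so nothing further is needed.
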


\begin{proof}
There exists a vertex-disjoint decomposition of $X$ into subgraphs $[\mathcal T_0 = \mathcal C, \mathcal T_1, \dots, \mathcal T_m]$ that partitions its edge set, where the $\mathcal T_i$ are trees for all $i \geq 1$. Rigorously, let $\mathcal V = \{v_1, \dots, v_m\}$ denote the set of all vertices adjacent to a vertex in $\mathcal C$, but not in $\mathcal C$. For vertex $v_i$, remove the corresponding vertex in $\mathcal C$, and refer to the component in the resulting graph containing $v_i$ by $\mathcal T_i$: this is connected and acyclic (as $\mathcal C$ was the only cycle in $X$), and thus a tree. From $\mathcal C$ being the only cycle subgraph in $X$, it follows that every vertex and edge of $X$ must be contained in some unique subgraph $\mathcal T_i$. 

Let $\mathcal C'$ be a cycle subgraph in $\FS(X, \Star_n)$ that achieves the girth. Recall that $X_{\mathcal C'}$ must have finite girth and $\delta(X_{\mathcal C'}) \geq 2$, so $X_{\mathcal C'}$ must contain $\mathcal C$ as a subgraph, and for any tree $\mathcal T_i$ with $i \geq 1$, the leaves of $\mathcal T_i$ cannot be in $X_{\mathcal C'}$. Upon removing such leaves from $\mathcal T_i$, the leaves of the resulting graph also fail to be in $X_{\mathcal C'}$. In this manner, we can continually ``prune" the leaves of each $\mathcal T_i$ to conclude that $X_{\mathcal C'} = \mathcal C$. Thus, $\mathcal C'$ also lies in the subgraph $\FS(X_{\mathcal C'}, \Star_k) \cong \FS(\mathcal C, \Star_k)$, so $g(\FS(X, \Star_n)) = k(k-1)$.
\end{proof}

\noindent In particular, this yields that any graph in $\mathcal G$ that is not a cycle has at least two cycle subgraphs.

\subsubsection{Graphs with Two Cycle Subgraphs}

We proceed to the case in which $X$ has at least two distinct cycle subgraphs. In this direction, we begin by motivating why barbell and theta graphs are natural candidates of such graphs $X$ to study. 

\begin{lemma} \label{two_cycle_subgphs}
Any connected graph $X$ with at least two cycle subgraphs must necessarily have a subgraph isomorphic to either a barbell graph or a $\theta$-graph.
\end{lemma}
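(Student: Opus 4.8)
The plan is to extract from the hypothesis two distinct cycle subgraphs $\mathcal C_1, \mathcal C_2$ of $X$ and analyze how they overlap, showing in each case that $X$ contains either a barbell graph or a $\theta$-graph as a subgraph. First I would pick two distinct cycle subgraphs and, among all such pairs, choose one minimizing $|V(\mathcal C_1) \cap V(\mathcal C_2)|$; call this intersection $I$. The argument then branches on $|I|$.

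\medskip

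\textbf{Case $|I| = 0$.} Here $\mathcal C_1$ and $\mathcal C_2$ are vertex-disjoint. Since $X$ is connected, there is a path $\mathcal P$ in $X$ from some vertex of $\mathcal C_1$ to some vertex of $\mathcal C_2$; taking a shortest such path guarantees its internal vertices avoid $V(\mathcal C_1) \cup V(\mathcal C_2)$ and that its edge set is disjoint from $E(\mathcal C_1) \cup E(\mathcal C_2)$. Then $[\mathcal C_1, \mathcal C_2, \mathcal P]$ is exactly a barbell graph (first bullet of the definition), and it is a subgraph of $X$.

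\medskip

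\textbf{Case $|I| = 1$.} If $\mathcal C_1$ and $\mathcal C_2$ meet in a single vertex $u$, then $[\mathcal C_1, \mathcal C_2, \{u\}]$ is a barbell graph in the second sense (trivial path), provided $E(\mathcal C_1) \cap E(\mathcal C_2) = \emptyset$, which holds automatically since two cycles sharing an edge would share both its endpoints. So we again land in the barbell case.

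\medskip

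\textbf{Case $|I| \geq 2$.} This is the delicate case and I expect it to be the main obstacle. Here $\mathcal C_1$ and $\mathcal C_2$ share at least two vertices. The idea is: walk around $\mathcal C_1$ and record the maximal arcs of $\mathcal C_1$ whose internal vertices lie outside $\mathcal C_2$; since the two cycles are distinct, at least one such arc $A$ exists with both endpoints $v, w$ in $V(\mathcal C_1) \cap V(\mathcal C_2)$ and with at least one edge not in $E(\mathcal C_2)$ (otherwise $\mathcal C_1 \subseteq \mathcal C_2$, forcing $\mathcal C_1 = \mathcal C_2$, a contradiction). The vertices $v, w$ split $\mathcal C_2$ into two arcs $B_1, B_2$. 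Now $A, B_1, B_2$ are three internally-disjoint $v$–$w$ paths, and at least two of them are genuinely distinct in $X$ (indeed all three, after possibly replacing $A$ by a sub-arc to remove any stray reintersections with $B_1 \cup B_2$ — this cleanup is the fiddly part). Any two of these three paths already form a cycle; the union of all three — or of two of them plus the third — is precisely a $\theta$-graph, i.e. a biconnected graph with $r=1$ ear, sitting inside $X$. The one wrinkle is ensuring the three paths are internally disjoint: $A$'s internal vertices avoid $V(\mathcal C_2) \supseteq V(B_1) \cup V(B_2) \setminus \{v,w\}$ by construction, and $B_1, B_2$ are internally disjoint as arcs of a cycle, so this is in fact immediate; the only real care needed is the selection of $A$ as a maximal outside-arc, which I would spell out carefully.

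\medskip

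Combining the three cases exhausts all possibilities for $|I|$, and in each we have produced a subgraph of $X$ isomorphic to a barbell graph or a $\theta$-graph, completing the proof.
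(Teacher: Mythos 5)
Your proposal is correct and follows essentially the same route as the paper: trichotomize on how two distinct cycles intersect, obtaining a barbell via a (truncated/shortest) connecting path when they share at most one vertex, and a $\theta$-subgraph from a detour path when they share at least two. The only cosmetic difference is that in the last case you graft an arc of $\mathcal C_1$ that leaves $\mathcal C_2$ onto $\mathcal C_2$, whereas the paper grafts an arc of $\mathcal C_2$ onto $\mathcal C_1$; your observation that the chosen arc's internal vertices automatically avoid $V(\mathcal C_2)$ correctly disposes of the disjointness worry.
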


\begin{proof}
Let $\mathcal C_1$ and $\mathcal C_2$ be two distinct cycle subgraphs of $X$. The existence of a barbell subgraph of $X$ is immediate if $\mathcal C_1$ and $\mathcal C_2$ share precisely one vertex. If $\mathcal C_1$ and $\mathcal C_2$ are vertex-disjoint, take $v \in V(\mathcal C_1)$ and $w \in V(\mathcal C_2)$ and any path $P_0$ from $v$ to $w$. Take the last vertex along $P_0$ and the earliest vertex in $\mathcal C_2$ after this point, and truncate $P_0$ at these respective ends. This yields a path between $\mathcal C_1$ and $\mathcal C_2$ with inner vertices disjoint from both cycles, and thus a barbell subgraph of $X$.

Now say $X$ does not contain a barbell subgraph, so any two cycle subgraphs $\mathcal C_1$ and $\mathcal C_2$ have at least two vertices in common. To construct a path $\mathcal P$ with endpoints in $V(\mathcal C_1)$ but otherwise vertex-disjoint from $V(\mathcal C_1)$, let $v \in V(\mathcal C_1) \cap V(\mathcal C_2)$. We either have that one of the two edges of $\mathcal C_2$ incident to $v$ is distinct from those in $\mathcal C_1$, or the neighbors of $v$ in $\mathcal C_2$ are also the neighbors of $v$ in $\mathcal C_1$. In the latter case, take one of the two neighbors of $v$ in $\mathcal C_1$ and $\mathcal C_2$, denote this by $v$, and proceed similarly until we eventually get $v' \in V(\mathcal C_2)$ not following the cycle in $\mathcal C_1$ (the distinctness of $\mathcal C_1$ and $\mathcal C_2$ guarantees this exists). Let $v$ be the first vertex in path $\mathcal P$, and $v'$ be the following vertex. Then proceed along $\mathcal C_2$ from $v'$ until we encounter $w \in V(\mathcal C_1) \cap V(\mathcal C_2)$ (which must eventually happen, since we assumed $\mathcal C_1$ and $\mathcal C_2$ have at least two common vertices). Then $[\mathcal C_1, \mathcal P]$ is a theta subgraph of $X$.
\end{proof}

\begin{remark} \label{girth_basics}
Section \ref{graph_families} defines $n \in V(\Star_n)$ as the central vertex of $\Star_n$. To avoid confusion, we henceforth elect to instead notate the central vertex of $\Star_n$ by $\mathfrak n$. In forthcoming discussions, we shall frequently elect to informally refer to a swap sequence\footnote{Such a swap sequence will usually be denoted $\mathcal S$, with the transpositions in $\mathcal S$ denoting pairs of vertices along which swaps occur in the $X$ graph, which always involve $\mathfrak n$. We occasionally also use $\mathcal V = \{\sigma_i\}_{i=0}^\lambda$ to refer specifically to the vertices in $\FS(X, \Star_n)$ corresponding to the configurations resulting from the swap sequence $\mathcal S$. Here, $\mathcal V$ need not be unique, as we can place all vertices $V(\Star_n) \setminus \{n\}$ in any way onto $V(X) \setminus \sigma_0^{-1}(n)$ and refer to the trajectory that $\mathfrak n$ takes along $X$. Thus, whenever it is referenced in an argument, assume $\mathcal V$ refers to any such path in $\FS(X, \Star_n)$.} that $\mathfrak n$ traces around the graph $X$ to achieve a cycle in $\FS(X, \Star_n)$. The following hold for any swap sequence $\mathcal S$ that achieves a cycle in $\FS(X, \Star_n)$.
\begin{enumerate}
    \item Denote the path taken by $y \in V(\Star_n)$ during $\sigma$ by $\mathcal P_y$. Either $\mathcal P_y$ ``folds in on itself" by retracing all edges in the opposite direction, or we can extract a simple cycle in $X$ from $\mathcal P_y$.
    \item We can assume, without loss of generality, that $\mathfrak n$ begins its traversal of $X$ on any $v \in \sigma^{-1}(V(Y))$. This corresponds to fixing $\sigma_1(v) = \mathfrak n$, or circularly shifting swaps of $\sigma$ depending on the most convenient vertex of $\sigma^{-1}(V(Y))$ at which to begin studying the traversal of $X$.
\end{enumerate}
\end{remark}

\noindent We now restrict our study to $X$ being either a barbell or a theta graph. For such cases, we can derive upper bounds on $g(\FS(X, \Star_n))$ that are linear in $g(X)$, a notable improvement from Corollary \ref{quad_bd}.

\begin{proposition} \label{barbell_bd}
Take $X$ a barbell graph with decomposition $[\mathcal C_1, \mathcal C_2, \mathcal P]$. Then $g(\FS(X, \Star_n)) \leq 2(|V(\mathcal C_1)| + |V(\mathcal C_2)| + 2|E(\mathcal P)|)$. Also, if $2(|V(\mathcal C_1)| + |V(\mathcal C_2)| + 2|E(\mathcal P)|) < \min\{|V(\mathcal C_1)|(|V(\mathcal C_1)|-1), |V(\mathcal C_2)|(|V(\mathcal C_2)|-1)\}$, then $g(\FS(X, \Star_n)) = 2(|V(\mathcal C_1)| + |V(\mathcal C_2)| + 2|E(\mathcal P)|)$.
\end{proposition}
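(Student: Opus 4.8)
The plan is to prove the upper bound by exhibiting an explicit closed walk of the central vertex $\mathfrak n$ around $X$, and then prove the matching lower bound by arguing that any cycle in $\FS(X,\Star_n)$ realizing a shorter length must be confined to one of the two cycles $\mathcal C_1,\mathcal C_2$, which is ruled out by the hypothesis together with Lemma~\ref{star_cycle}. For the upper bound, I would describe the trajectory of $\mathfrak n$ as follows: start with $\mathfrak n$ on the endpoint $v\in V(\mathcal C_1)$ of the path $\mathcal P$; walk $\mathfrak n$ all the way around $\mathcal C_1$ once (this uses $|V(\mathcal C_1)|$ swaps and returns $\mathfrak n$ to $v$ while cyclically permuting the $|V(\mathcal C_1)|-1$ other tokens on $\mathcal C_1$); then walk $\mathfrak n$ across $\mathcal P$ to $w\in V(\mathcal C_2)$ ($|E(\mathcal P)|$ swaps), around $\mathcal C_2$ once ($|V(\mathcal C_2)|$ swaps), and back across $\mathcal P$ to $v$ (another $|E(\mathcal P)|$ swaps). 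At this point $\mathfrak n$ is back at $v$; the tokens on $\mathcal P$ have been shifted and restored, so they are back in place, but the tokens on $\mathcal C_1$ and $\mathcal C_2$ have each undergone one cyclic rotation and are not yet restored. Repeating this entire loop $|V(\mathcal C_1)|$ times would restore $\mathcal C_1$ but overshoot $\mathcal C_2$ in general; instead I traverse $\mathcal C_1$ in one direction and $\mathcal C_2$ in the opposite direction, and check that after exactly two passes of the composite loop (going "out and back" so that $\mathcal C_1$ is rotated once forward then once backward, similarly $\mathcal C_2$) everything returns to the identity. This is precisely where the factor $2$ and the term $2|E(\mathcal P)|$ come from: one full loop is $|V(\mathcal C_1)|+|V(\mathcal C_2)|+2|E(\mathcal P)|$ swaps, and we do it twice, giving the claimed bound. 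I would also need to note that consecutive swaps in this sequence never involve the same edge of $X$ — true generically; at the "turnaround" points where $\mathfrak n$ reverses direction on $\mathcal P$ or on a $\mathcal C_i$, one inserts the traversal of the cycle so that the reversal is separated by at least one distinct swap, so the sequence genuinely spans a cycle subgraph (not just a closed walk) in $\FS(X,\Star_n)$.

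For the lower bound, let $\mathcal C'$ be a girth-achieving cycle in $\FS(X,\Star_n)$ and consider $X_{\mathcal C'}$, which by the discussion following Definition~\ref{path_induced} is connected with $\delta(X_{\mathcal C'})\ge 2$ and finite girth. Since $X$ is a barbell graph, its only subgraphs with minimum degree at least $2$ are: $\mathcal C_1$ alone, $\mathcal C_2$ alone, or all of $X$ (any subgraph using an edge of $\mathcal P$ and having $\delta\ge 2$ must contain both endpoints $v,w$ and hence, by degree-$2$ propagation along $\mathcal P$ and into both cycles, all of $X$). If $X_{\mathcal C'}=\mathcal C_i$, then $\mathcal C'$ lies inside $\FS(\mathcal C_i,\Star_{|V(\mathcal C_i)|})$, so by Lemma~\ref{star_cycle} its length is a multiple of $|V(\mathcal C_i)|(|V(\mathcal C_i)|-1)$, in particular at least $\min\{|V(\mathcal C_1)|(|V(\mathcal C_1)|-1),|V(\mathcal C_2)|(|V(\mathcal C_2)|-1)\}$, which by hypothesis strictly exceeds $2(|V(\mathcal C_1)|+|V(\mathcal C_2)|+2|E(\mathcal P)|)$, contradicting the upper bound already established. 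Hence $X_{\mathcal C'}=X$, i.e. $\mathfrak n$ must traverse the entire barbell. It then remains to show that any closed trajectory of $\mathfrak n$ visiting every vertex of $X$ and restoring all tokens has length at least $2(|V(\mathcal C_1)|+|V(\mathcal C_2)|+2|E(\mathcal P)|)$. The key observation is: $\mathfrak n$ must cross the path $\mathcal P$ an even number of times, and at least twice (to reach $\mathcal C_2$ and return), contributing at least $2|E(\mathcal P)|$; and on each $\mathcal C_i$, for the cyclic permutation of the other tokens induced there to be trivial, the net signed number of times $\mathfrak n$ winds around $\mathcal C_i$ must be $\equiv 0 \pmod{|V(\mathcal C_i)|}$, but since $\mathfrak n$ also has to get on and off $\mathcal C_i$ through a single vertex, a parity/winding argument forces at least $2$ passes, i.e. at least $2|V(\mathcal C_i)|$ swaps inside $\mathcal C_i$ — I would formalize this by tracking, for a fixed non-central token $y$ lying on $\mathcal C_i$ but such that $\mathfrak n$ enters $\mathcal C_i$ "past" $y$, how its position changes and noting it can only return to start after $\mathfrak n$ has made full loops.

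The main obstacle I expect is the lower bound bookkeeping: precisely arguing that a closed walk of $\mathfrak n$ through all of $X$ that restores the identity permutation must spend at least $2|V(\mathcal C_i)|$ steps on each $\mathcal C_i$ and at least $2|E(\mathcal P)|$ on $\mathcal P$, \emph{additively}. Crossings of $\mathcal P$ are clean (a cut-edge / parity argument). The delicate point is ruling out that $\mathfrak n$ could enter a cycle $\mathcal C_i$, back out partway, re-enter, etc., "amortizing" its winding cheaply; one must show that whatever net rotation is achieved on $\mathcal C_i$, restoring it costs at least $2|V(\mathcal C_i)|$ edge-traversals inside $\mathcal C_i$ given that $\mathfrak n$ must also exit through the unique attaching vertex. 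I would handle this by picking the token $y$ on $\mathcal C_i$ adjacent to the attaching vertex on the "far side" and showing its displacement along $\mathcal C_i$ equals the signed count of times $\mathfrak n$ traversed the far edge of $\mathcal C_i$; combined with the requirement that $\mathfrak n$ return to $v$ and that every edge of $\mathcal C_i$ is used (since $X_{\mathcal C'}=X$), this pins down the $2|V(\mathcal C_i)|$ count. Summing the three lower bounds and invoking that consecutive swaps differ then yields $g(\FS(X,\Star_n))\ge 2(|V(\mathcal C_1)|+|V(\mathcal C_2)|+2|E(\mathcal P)|)$, matching the upper bound and completing the proof.
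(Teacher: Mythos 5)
Your upper-bound construction is essentially the paper's: walk $\mathfrak n$ once around $\mathcal C_1$, across $\mathcal P$, once around $\mathcal C_2$, back across $\mathcal P$, then repeat the whole loop with both cycle traversals reversed; this uses $2|V(\mathcal C_1)|+2|V(\mathcal C_2)|+4|E(\mathcal P)|$ swaps and restores every token, so that half is fine. The reduction of the lower bound to $X_{\mathcal C'}=X$ (the only connected subgraphs of a barbell with minimum degree $2$ are $\mathcal C_1$, $\mathcal C_2$, and $X$ itself, and the first two are killed by Lemma~\ref{star_cycle} together with the hypothesis) is also sound and consistent with the paper's viewpoint.

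The genuine gap is in the lower-bound bookkeeping, and it is not only the ``delicate point'' you flag. The three contributions you propose to sum --- at least $2|V(\mathcal C_1)|$ swaps on $\mathcal C_1$, at least $2|V(\mathcal C_2)|$ on $\mathcal C_2$, and at least $2|E(\mathcal P)|$ on $\mathcal P$ --- add up to $2(|V(\mathcal C_1)|+|V(\mathcal C_2)|+|E(\mathcal P)|)$, which falls short of the target $2(|V(\mathcal C_1)|+|V(\mathcal C_2)|+2|E(\mathcal P)|)$ by $2|E(\mathcal P)|$ (the two quantities agree only when $\mathcal P$ is trivial). What is actually needed, and what the paper's argument supplies, is that each edge of $\mathcal P$ must be traversed at least \emph{four} times. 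The mechanism is the same one you already used to rule out $X_{\mathcal C'}=\mathcal C_i$: if $\mathcal P$ is crossed only twice, then $\mathfrak n$ makes a single contiguous excursion into $\mathcal C_2$, and since the tokens on $V(\mathcal C_2)\setminus\{w\}$ can move only during that excursion, the excursion must by itself restore all of them; it therefore traces a nondegenerate closed walk, hence a cycle, of $\FS(\Cycle_{|V(\mathcal C_2)|},\Star_{|V(\mathcal C_2)|})$, whose length is $|V(\mathcal C_2)|(|V(\mathcal C_2)|-1)$ by Lemma~\ref{star_cycle} --- simultaneously exceeding the budget allowed by the hypothesis and exhibiting a proper sub-cycle of $\mathcal C'$, a contradiction. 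Your ``two passes, one in each direction'' picture for $\mathcal C_2$ implicitly requires two separate excursions (a single excursion that winds forward and then backward backtracks in $\FS(X,\Star_n)$ and cannot occur inside a cycle), but you never convert this into the four path-crossings that the count needs. Until you do, the summed lower bound does not match the upper bound you established.
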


\begin{figure}[ht]
    \centering
    \includegraphics[width=0.7\textwidth]{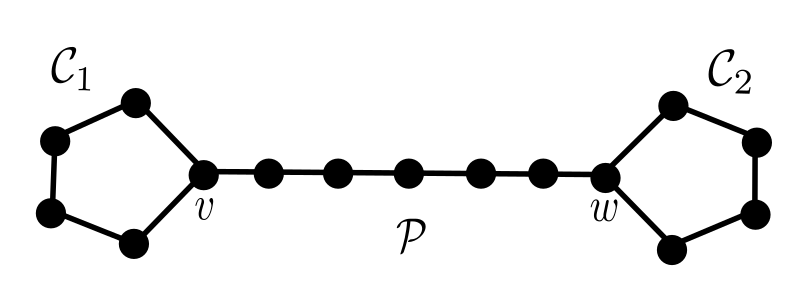}
    \caption{An illustration of a barbell graph, applying the proceeding notation. The sequence of $(X, \Star_n)$-friendly swaps starting at $v$ described by $\sigma$ yields a cycle in $\FS(X, \Star_n)$.}
    \label{fig:barbell}
\end{figure}

\begin{proof}
Denote the vertices of $\mathcal P$ by $\{x_1 = v, \dots, x_m = w\}$, in order following the path. Letting $m_1 = |V(\mathcal C_1)|$ and $m_2 = |V(\mathcal C_2)|$, denote the vertices of $\mathcal C_1$ and $\mathcal C_2$ by $\{v_1 = v, \dots, v_{m_1}\}$ and $\{w_1 = w, \dots, w_{m_2}\}$, respectively. Consider the following sequences of swaps.
\begin{align*}
    & \mathcal S_1 = (v \ v_2)(v_2 \ v_3)\dots(v_{m_1-1} \ v_{m_1})(v_{m_1} \ v) \\
    & \mathcal S_2 = (v \ x_2)(x_2 \ x_3) \dots (x_{m-1} \ w) \\
    & \mathcal S_3 = (w \ w_2)(w_2 \ w_3) \dots (w_{m_2-1} \ w_{m_2})(w_{m_2} \ w)
\end{align*}
Starting $\mathfrak n$ at $v$, perform the sequence $\sigma$ of $(X, \Star_n)$-friendly swaps given by
\begin{align*}
    \mathcal S: \mathcal S_1 \to \mathcal S_2 \to \mathcal S_3 \to \mathcal S_2^{-1} \to \mathcal S_1^{-1} \to \mathcal S_2 \to \mathcal S_3^{-1} \to \mathcal S_2^{-1}
\end{align*}
It is straightforward to confirm that $\mathcal S$ corresponds to a cycle in $\FS(X, \Star_n)$ with length $2(|V(\mathcal C_1)| + |V(\mathcal C_2)| + 2|E(\mathcal P)|)$. Now consider the setting $2(|V(\mathcal C_1)| + |V(\mathcal C_2)| + 2|E(\mathcal P)|) < \min\{|V(\mathcal C_1)|(|V(\mathcal C_1)|-1), |V(\mathcal C_2)|(|V(\mathcal C_2)|-1)\}$, and assume that there exists a swap sequence $\mathcal S'$ with length strictly less than $2(|V(\mathcal C_1)| + |V(\mathcal C_2)| + 2|E(\mathcal P)|)$ corresponding to a cycle in $\FS(X, \Star_n)$. Certainly the path $\mathcal P$ is crossed an even number of times (to return to a vertex in $V(\mathcal C_1)$, where we assume without loss of generality $\mathfrak n$ begins its traversal), so $\mathcal P$ is crossed exactly twice. But then $\mathcal S'$ must have completed some cycle in $\FS(X, \Star_n)$ strictly on the subgraph $V(\mathcal C_2)$ as we do not return there, a contradiction on $\mathcal S'$ as a sequence of swaps achieving the girth of $\FS(X, \Star_n)$.
\end{proof}

\begin{proposition} \label{theta_bd}
Let $X$ be a $\theta$-graph with cycle subgraphs $\mathcal C_1, \mathcal C_2, \mathcal C$, as depicted in the figure below.
\begin{enumerate}
    \item If all three paths contain an inner vertex, $g(\FS(\Star_n, Y)) \leq 2(|V(\mathcal C_1)| + |V(\mathcal C_2)| + |V(\mathcal C)|)$. Furthermore, if $2(|V(\mathcal C_1)| + |V(\mathcal C_2)| + |V(\mathcal C)|) < \min\{|V(\mathcal C_1)|(|V(\mathcal C_1)|-1), |V(\mathcal C_2)|(|V(\mathcal C_2)|-1), |V(\mathcal C)|(|V(\mathcal C)|-1)\}$, then $g(\FS(X, \Star_n)) = 2(|V(\mathcal C_1)| + |V(\mathcal C_2)| + |V(\mathcal C)|)$.
    \item If (exactly) one of the three paths is an edge, then $g(\FS(\Star_n, Y)) \leq 3(|V(\mathcal C_1)| + |V(\mathcal C_2)| + |V(\mathcal C)|)$. Furthermore, if $3(|V(\mathcal C_1)| + |V(\mathcal C_2)| + |V(\mathcal C)|) < \min\{|V(\mathcal C_1)|(|V(\mathcal C_1)|-1), |V(\mathcal C_2)|(|V(\mathcal C_2)|-1), |V(\mathcal C)|(|V(\mathcal C)|-1)\}$, then $g(\FS(X, \Star_n)) = 3(|V(\mathcal C_1)| + |V(\mathcal C_2)| + |V(\mathcal C)|)$.
\end{enumerate}
\end{proposition}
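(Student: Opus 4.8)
The plan is to follow the template of Proposition \ref{barbell_bd}: establish the upper bounds with explicit $(X,\Star_n)$-friendly swap sequences, and obtain the matching equalities by analyzing the $\mathcal C'$-induced subgraph $X_{\mathcal C'}$ of a shortest cycle $\mathcal C'$ in $\FS(X,\Star_n)$. Write $X$ as two hub vertices $u,u'$ joined by three internally disjoint paths $Q_1,Q_2,Q_3$ of lengths $p_1,p_2,p_3$, so that (after relabeling) the three cycle subgraphs are $\mathcal C_1=Q_2\cup Q_3$, $\mathcal C_2=Q_1\cup Q_3$, $\mathcal C=Q_1\cup Q_2$; since each cycle has equally many vertices and edges and each $Q_j$ lies in two of the three cycles, $|V(\mathcal C_1)|+|V(\mathcal C_2)|+|V(\mathcal C)| = 2(p_1+p_2+p_3) = 2|E(X)|$.

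\emph{Upper bounds.} In case 1 ($p_1,p_2,p_3\ge 2$), let $\mathcal S_j$ be the swap sequence that slides $\mathfrak n$ from $u$ to $u'$ along $Q_j$, and splice the $\mathcal S_j$ and $\mathcal S_j^{-1}$ into a master sequence that walks $\mathfrak n$ once around each of $\mathcal C_1,\mathcal C_2,\mathcal C$ in each direction, inserting connecting subpaths (through interior vertices of the $Q_j$) so that no two consecutive swaps coincide and the net permutation of the non-$\mathfrak n$ tokens is trivial; exactly as in Proposition \ref{barbell_bd}, a direct check shows this is a cycle in $\FS(X,\Star_n)$ of length $2\cdot 2|E(X)| = 2(|V(\mathcal C_1)|+|V(\mathcal C_2)|+|V(\mathcal C)|)$. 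In case 2 (say $Q_1$ is a single edge), the interior of $Q_1$ is unavailable as a connecting subpath, which obstructs this construction: routing $\mathfrak n$ around the two cycles containing $Q_1$ and back would force a repeated swap along the edge $Q_1$. One repairs this by walking around each of the three cycles three times rather than twice (equivalently traversing each $Q_j$ six times), yielding a cycle of length $3(|V(\mathcal C_1)|+|V(\mathcal C_2)|+|V(\mathcal C)|)$.

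\emph{Lower bounds.} Suppose the displayed inequality holds and let $\mathcal C'$ be a shortest cycle in $\FS(X,\Star_n)$. As observed after Definition \ref{path_induced}, $X_{\mathcal C'}$ is connected, has $\delta(X_{\mathcal C'})\ge 2$, and has finite girth. Any subgraph of a $\theta$-graph with minimum degree $\ge 2$ is either one of $\mathcal C_1,\mathcal C_2,\mathcal C$ or all of $X$: omitting any edge of $Q_j$ forces (via the degree-$2$ interior vertices of $Q_j$) the omission of all of $Q_j$, leaving a subgraph of the cycle $Q_k\cup Q_l$, which must then equal that whole cycle. If $X_{\mathcal C'}=\mathcal C_i$ then $\mathcal C'$ is a cycle subgraph of $\FS(\mathcal C_i,\Star_{|V(\mathcal C_i)|})$, so by Lemma \ref{star_cycle} $|V(\mathcal C')| = |V(\mathcal C_i)|(|V(\mathcal C_i)|-1) \ge \min_j |V(\mathcal C_j)|(|V(\mathcal C_j)|-1)$, which by hypothesis strictly exceeds the upper bound already proved, contradicting the minimality of $\mathcal C'$. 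Hence $X_{\mathcal C'}=X$, i.e.\ $\mathfrak n$'s closed walk traverses every edge. It remains to show — and this is the main obstacle — that any such closed walk inducing the trivial permutation has length at least $2(|V(\mathcal C_1)|+|V(\mathcal C_2)|+|V(\mathcal C)|)$ in case 1 and $3(|V(\mathcal C_1)|+|V(\mathcal C_2)|+|V(\mathcal C)|)$ in case 2. Writing $t_j$ for the number of times the walk traverses $Q_j$, so that its length is $\sum_j p_j t_j$ with each $t_j\ge 1$, the plan is to use the triviality of the induced permutation together with Remark \ref{girth_basics}(1) (every token's trajectory either folds on itself or contains a simple cycle) to show that the $t_j$ are forced to be large enough — each $t_j\ge 4$ in case 1 and $\ge 6$ in case 2 — from which $\sum_j p_j t_j\ge 4(p_1+p_2+p_3) = 2(|V(\mathcal C_1)|+|V(\mathcal C_2)|+|V(\mathcal C)|)$ (respectively $\ge 6(p_1+p_2+p_3)$). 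Together with the upper bounds this gives the claimed equalities.
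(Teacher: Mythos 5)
Your setup matches the paper's: the explicit swap sequences for the upper bounds (the paper uses the word $123123$ repeated twice, resp.\ three times, over the three $v$--$w$ paths), the reduction of the lower bound to $X_{\mathcal C'}=X$ via the minimum-degree argument, and the reformulation of the cycle length as $\sum_j p_j t_j$ with $t_j$ the traversal counts. But the proposal stops exactly where the proof actually lives. The entire difficulty of the proposition is in \emph{forcing} the traversal counts to be large, and you defer this with ``the plan is to use the triviality of the induced permutation together with Remark \ref{girth_basics}(1) to show that the $t_j$ are forced to be large enough.'' That is a statement of intent, not an argument. In the paper, establishing these lower bounds on the $(n_1,n_2,n_3)$ requires: (i) ruling out $n_i=2$ because two traversals of a path sandwich a full rotation of $\mathfrak n$ around the complementary cycle, which already costs $k(k-1)$ swaps; (ii) ruling out $n_i=3$ for long paths because an odd number of traversals displaces a token on an inner vertex; and, in the edge-path case, (iii) a delicate token-tracking analysis (where do $\sigma_0(w)$ and $\sigma_0(w_2)$ sit after each traversal of $\mathcal P_2$, how do the tokens on $\mathcal C$ rotate between traversals) to exclude $n_2=4$, $n_3=4$, and finally $n_1=4$ with $n_2=n_3=6$. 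None of this is present in your writeup.

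Moreover, the specific claim you propose to prove in case 1 --- that \emph{every} $t_j\ge 4$ --- is stronger than what the paper establishes and is not obviously true for the shortest path. The paper only shows $n_2,n_3\ge 4$ (using $p_2,p_3\ge 4$, which follows from the hypothesis on the cycle sizes) together with $n_1\ge 3$, and then closes the case $n_1=3$ by a parity argument: the circular word has even length, so one of $n_2,n_3$ is at least $5$, and $3p_1+5p_2+4p_3\ge 4(p_1+p_2+p_3)$ because $p_1\le p_2$. If you insist on $t_1\ge 4$ you would need to rule out $n_1=3$ outright, which the paper does not do and which is not clearly available. Similarly, in case 2 the conclusion ``each $t_j\ge 6$'' is correct but is the hard theorem, not a lemma one can wave at; as written, the proposal proves the upper bounds and the reduction but not the matching lower bounds.
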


\begin{figure}[ht]
    \centering
    \includegraphics[height=0.3\textwidth]{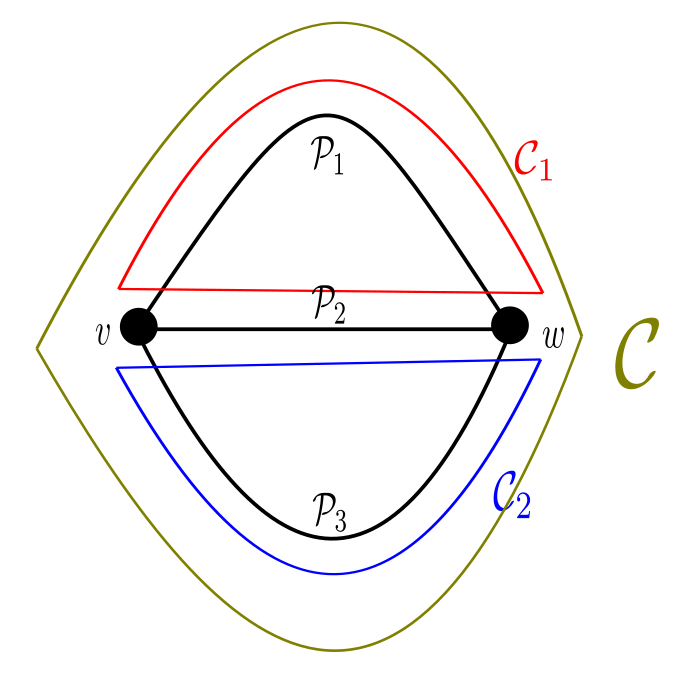}
    \caption{An illustration of a $\theta$-graph, applying the proceeding notation and labeling the cycles described above. The sequence $\mathcal S$ of $(X, \Star_n)$-friendly swaps starting at $v$ corresponds to a cycle in $\FS(X, \Star_n)$ with $\mathcal S$ differing depending on whether or not one of the three paths is an edge. In forthcoming arguments, we will often assume some ordering on the lengths of $\mathcal P_1, \mathcal P_2, \mathcal P_3$. We shall always refer to $\mathcal C_1$ as the cycle resulting from $\mathcal P_1$ and $\mathcal P_2$; similarly, $\mathcal C_2$ is the cycle resulting from $\mathcal P_2$ and $\mathcal P_3$, and $\mathcal C$ the cycle resulting from $\mathcal P_1$ and $\mathcal P_3$.}
    \label{fig:theta}
\end{figure}

\begin{proof}
Let the vertices of degree $3$ be denoted $v, w$, and label the three paths from $v$ to $w$ in $X$ by $\mathcal P_1 = \{v_1 = v, \dots, v_{m_1} = w\}$, $\mathcal P_2 = \{w_1 = v, \dots, w_{m_2} = w\}$, $\mathcal P_3 = \{x_1 = v, \dots, x_{m_3} = w\}$ in order following the path. Consider the following sequences of swaps.
\begin{align*}
    & \mathcal S_1 = (v \ v_2)(v_2 \ v_3)\dots(v_{m_1-1} \ w) \\
    & \mathcal S_2 = (v \ w_2)(w_2 \ w_3) \dots (w_{m_2-1} \ w) \\
    & \mathcal S_3 = (v \ x_2)(x_2 \ x_3) \dots (x_{m_3-1} \ w)
\end{align*}
Starting $\mathfrak n$ at $v$, perform the sequence $\sigma$ of $(X, \Star_n)$-friendly swaps given by
\begin{align*}
    \mathcal S: \mathcal S_1 \to \mathcal S_2^{-1} \to \mathcal S_3 \to \mathcal S_1^{-1} \to \mathcal S_2 \to \mathcal S_3^{-1}
\end{align*}
It is straightforward to confirm that performing $\mathcal S$ twice yields a cycle in $\FS(X, \Star_n)$ under setting (1), while performing $\mathcal S$ three times yields a cycle in $\FS(X, \Star_n)$ under setting (2); denote these $\mathcal S^{(2)}$ and $\mathcal S^{(3)}$, respectively. We show that $\mathcal S^{(2)}$ and $\mathcal S^{(3)}$ achieve the girth of $\FS(X, \Star_n)$ under their respective settings. Assume without loss of generality that $\mathfrak n$ begins its traversal on $v$: in both cases, we argue on an arbitrary traversal $\mathcal S'$ that achieves a cycle in $\FS(X, \Star_n)$. Observe that $\mathcal S'$ proceeds by selecting one of the three paths $\mathcal P_1, \mathcal P_2, \mathcal P_3$ to reach $w$, then choosing a path to return to $v$, and continuing similarly until the cycle is achieved. As such, we can represent $\mathcal S'$ as a finite word\footnote{It is perhaps more appropriate to think of this as a circular word, as we can assume the word starts on any letter and that the first traversal is from $v$ to $w$ without loss of generality: establishing the first letter corresponds to choosing where to begin following the traversal of $\mathfrak n$, while setting the first traversal from $v$ to $w$ determines the ``direction" we go around the cycle in $\FS(X, \Star_n)$ given by $\mathcal S_{\mathcal W}'$. We shall take advantage of this later.} of even length on a ternary alphabet $\{1, 2, 3\}$ with no two consecutive letters equal, where odd indices correspond to selected paths from $v$ to $w$ that $\mathfrak n$ traverses, and even ones to selected paths from $w$ to $v$. For convenience, we elect to use this representation of $\mathcal S'$, and shall denote this $\mathcal S_{\mathcal W}'$; under this representation, $\mathcal S_{\mathcal W} = 123123$. 

We assume, without loss of generality, that the lengths of the paths (i.e. number of edges, one greater than the number of inner vertices) $\mathcal P_1, \mathcal P_2, \mathcal P_3$ satisfy $p_1 \leq p_2 \leq p_3$, and denote the number of occurrences of $\{1, 2, 3\}$ in $\mathcal S_{\mathcal W}'$ by $n_1, n_2, n_3$, respectively. Also denote $\phi = (p_1, p_2, p_3)$ and $\eta = (n_1, n_2, n_3)$.

\paragraph{Case 1: $p_1, p_2, p_3 > 1$, and $2(|V(\mathcal C_1)| + |V(\mathcal C_2)| + |V(\mathcal C)|) < \min\{|V(\mathcal C_1)|(|V(\mathcal C_1)|-1), |V(\mathcal C_2)|(|V(\mathcal C_2)|-1), |V(\mathcal C)|(|V(\mathcal C)|-1)\}$.}

We show $\phi \cdot \eta \geq 2(|V(\mathcal C_1)| + |V(\mathcal C_2)| + |V(\mathcal C)|)$, for which we can assume $n_1, n_2, n_3 \neq 0$ and $\min\{|V(\mathcal C_1)|, |V(\mathcal C_2)|, |V(\mathcal C)|\} \geq 8$, so $p_2 \geq 4$. Certainly $n_i \geq 2$, as all elements originally upon inner vertices of $\mathcal P_1, \mathcal P_2, \mathcal P_3$ return to their original position, and $n_i \neq 2$, since $n_i=2$ requires traversing a cycle subgraph in the trajectory given by Lemma \ref{star_cycle}\footnote{Specifically, in between uses of one of the three particular paths, we are reduced to traversing the cycle subgraph of $X$ constructed from the other two paths in the $\theta$-graph.}, contradicting $\phi \cdot \eta < 2(|V(\mathcal C_1)| + |V(\mathcal C_2)| + |V(\mathcal C)|)$. If $p_i \geq 4$, $\mathcal P_i$ cannot be traversed three times without displacing an element on an inner vertex, so $n_2, n_3 \geq 4$. It follows that $\phi \cdot \eta \geq 2(|V(\mathcal C_1)| + |V(\mathcal C_2)| + |V(\mathcal C)|)$ unless $n_1 = 3$, in which case either $n_2 \geq 5$ or $n_3 \geq 5$ since $\mathcal S_{\mathcal W}'$ has even length, implying that we again have the inequality.

\paragraph{Case 2: $p_1 = 1$, $3(|V(\mathcal C_1)| + |V(\mathcal C_2)| + |V(\mathcal C)|) < \min\{|V(\mathcal C_1)|(|V(\mathcal C_1)|-1), |V(\mathcal C_2)|(|V(\mathcal C_2)|-1), |V(\mathcal C)|(|V(\mathcal C)|-1)\}$.}

We show $\phi \cdot \eta \geq 3(|V(\mathcal C_1)| + |V(\mathcal C_2)| + |V(\mathcal C)|)$ for any $\mathcal S'$ achieving a cycle in $\FS(X, \Star_n)$. Assume for the sake of contradiction that there exists a trajectory $\mathcal S'$ with $\lambda = \phi \cdot \eta < 3(|V(\mathcal C_1)| + |V(\mathcal C_2)| + |V(\mathcal C)|)$: $n_1, n_2, n_3 \neq 0$ and $\min\{|V(\mathcal C_1)|, |V(\mathcal C_2)|, |V(\mathcal C)|\} \geq 11$, so $p_2, p_3 \geq 10$. Arguing as in Case (1), $n_2, n_3 \geq 4$, and $n_1, n_2, n_3 \neq 3, 5$. In particular, $n_2 < 6 \implies n_2 = 4$; similarly, $n_3 < 6 \implies n_3 = 4$. Here, denote $\sigma_0$ as the first configuration in the cycle in $\FS(X, \Star_n)$ traversed via $\mathcal S$ (see the footnote in Remark \ref{girth_basics}; we refer to $\mathcal V = \{\sigma_i\}_{i=0}^\lambda \subset V(\FS(X, \Star_n))$). We must have that one of $n_1, n_2, n_3$ is less than $6$.

First assume $n_2 = 4$. Without loss of generality, the word $\mathcal S_{\mathcal W}'$ starts with $2$, corresponding to $\mathcal P_2$ traversed from $v$ to $w$; $\sigma_w'$ has four $2$s, with substrings $1313\dots$ or $3131\dots$ between occurrences of $2$. There are two instances of $2$ in odd and even indices, which must alternate, since two consecutive traversals of $\mathcal P_2$ from $v$ to $w$ yields $\mathcal C$ traversed in the trajectory given by Lemma \ref{star_cycle} so that $\sigma_0(w_3) = \sigma_\lambda(w_3)$ (here, $\sigma_0(w_3)$ would be ``pushed out" of $\mathcal P_2$ following this second traversal, and must be pushed back in by a traversal of $\mathcal P_2$ from $w$ to $v$). Thus, all other elements upon $\mathcal C$ circularly rotate around $\mathfrak n$ between traversals of $\mathcal P_2$, all by some fixed offset less than $|V(\mathcal C)|$, as $\mathcal S'$ cannot execute the trajectory of Lemma \ref{star_cycle} upon $\mathcal C$ due to $\lambda < 3(|V(\mathcal C_1)| + |V(\mathcal C_2)| + |V(\mathcal C_3)|)$ (see Figure \ref{fig:rotation}). In particular, an element ``pushed out" from $V(\mathcal P_2) \setminus \{v, w\}$ onto $\{v,w\}$ (and thus onto $V(\mathcal C)$) cannot be swapped back up in the proceeding traversal of $\mathcal P_2$. Thus, $\sigma_0(w_2)$ lies upon $V(\mathcal C)$ until the fourth traversal of $\mathcal P_2$, for which it is swapped back onto $w_2$. 

\begin{figure}[ht]
    \centering
    \includegraphics[height=0.25\textheight, width=0.6\textwidth]{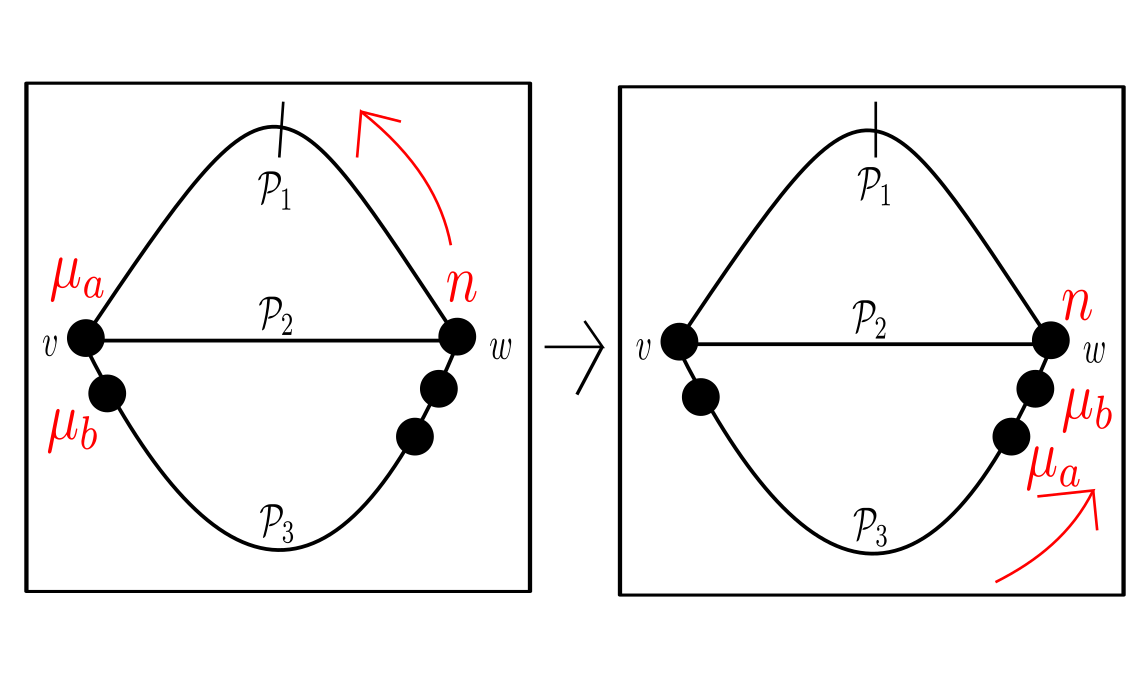}
    \caption{Between traversals of $\mathcal P_2$, $n$ moves around $\mathcal C$, and all other vertices of $\Star_n$ upon $\mathcal C$ move circularly around it; the hatch mark for $\mathcal P_1$ signifies that it is an edge. Here, we adopt the convention that vertices or subgraphs colored black are in $X$, and those colored red are in $Y$. In the diagram, $\{\mu_a, \mu_b\} \subset V(\Star_n) \setminus \{n\}$, with $n$ rotating twice around $\mathcal C$.}
    \label{fig:rotation}
\end{figure}

Take $\sigma_0(w)$, swapped onto $w_{m_2-1}$ during the first traversal of $\mathcal P_2$ and back onto $w$ during the second traversal of $\mathcal P_2$. It is clear that $\sigma_0(w)$ must remain upon $\mathcal C$ for the rest of the swap sequence (i.e. is not ``pushed in" $V(\mathcal P_2) \setminus \{v, w\}$ during the third and fourth traversals). Consider the values adjacent to $\sigma_0(w)$ upon $V(\mathcal C)$, ignoring $\mathfrak n$, after the second traversal of $\mathcal P_2$; these must lie upon vertices $x_2$ and $x_{m_3-1}$. Say that in $\sigma_0$, we have $\mu_1 = \sigma_0(x_2)$, $\mu_2 = \sigma_0(x_{m_3-1})$ (see Figure \ref{fig:theta_opt_1}). If $\sigma^{(2)} \in \mathcal V$ denotes the configuration after the second traversal, then $\sigma_0(x_2) \neq \sigma^{(2)}(x_2)$, $\sigma_0(x_{m_3-1}) \neq \sigma^{(2)}(x_{m_3-1})$. It is clear that these vertices adjacent to $\sigma_0(w)$ upon $V(\mathcal C)$ remain invariant until $n$ traverses $\mathcal P_2$ again, and at most one can change after any given traversal of $\mathcal P_2$. In particular, since $\sigma_0(w_2)$ is swapped back onto $w_2$ during the fourth traversal of $\mathcal P_2$, $\sigma_0(w)$ must be adjacent to $\sigma_0(w_2)$ after the second traversal of $\mathcal P_2$. 

From here, it is straightforward to derive a contradiction on $\phi \cdot \eta < 3(|V(\mathcal C_1)| + |V(\mathcal C_2)| + |V(\mathcal C)|)$ or $\mathcal S'$ achieving a cycle in $\FS(X, \Star_n)$ by splitting into cases based on where $\sigma_0(w_2)$ lies after the second traversal of $\mathcal P_2$. In particular, either $\mu_1$ or $\mu_2$ (for $\sigma_0(w_2)$ equal to $x_{m_3-1}$, $x_2$, respectively) lies upon $w_2$ after this second traversal. 

\begin{figure}[ht]
    \centering
    \includegraphics[width=0.9\textwidth]{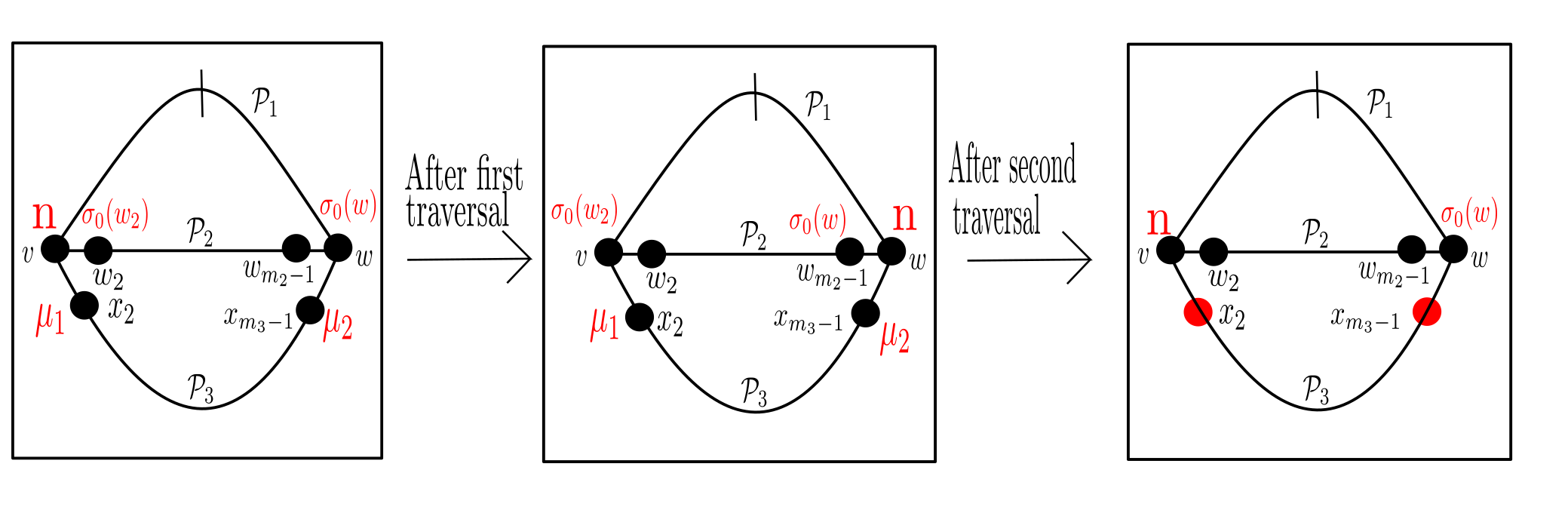}
    \caption{Illustration of the case where we assume $n_2 = 4$, after the first two traversals of $\mathcal P_2$; the hatch mark for $\mathcal P_1$ signifies that it is an edge. The red vertices in the rightmost figure indicate the possible preimages of $\sigma_0(w_2)$ following the second traversal of $\mathcal P_2$, as discussed in the text.}
    \label{fig:theta_opt_1}
\end{figure}

We can argue analogously for $n_3 = 4$, so we must have that $n_2 = n_3 = 6$, so $n_1 \leq 4$, from which it is easy to reduce this setting to $n_1 = 4$. Assume without loss of generality that $\mathcal S_{\mathcal W}'$ starts with $1$, corresponding to $\mathcal P_1$ traversed from $v$ to $w$: $\mathcal S_{\mathcal W}'$ has four $1$s, with substrings $2323\dots$ or $3232\dots$ between occurrences of $1$. Consider $\sigma_0(w)$: the path it traverses around $X$ cannot go across $\mathcal P_2$ or $\mathcal P_3$ (since $n_2 = n_3 = 6$), so it must be that the third traversal of $\mathcal P_1$ is from $w$ to $v$, swapping $\sigma_0(w)$ onto $w$ ($\sigma_0(w)$ must be swapped from $v$ to $w$ across $\mathcal P_1$ at either the third or fourth traversal of $\mathcal P_1$; if this were done in the fourth traversal, we could reorient $\mathcal S_{\mathcal W}'$ to start on this fourth traversal to derive a contradiction on $n_2 = n_3 = 6$). In particular, we can assume without loss of generality that the second traversal of $\mathcal P_1$ is also from $v$ to $w$ (if not, reorient $\mathcal S'$ to begin at what was originally the fourth traversal). For $\sigma_0(w)$ to return upon $w$ after $\mathcal S'$, it is straightforward to observe that the distance and path upon which $\sigma_0(w)$ is displaced between the first and second traversals of $\mathcal P_1$ determines the remaining trajectory by the preceding observations, as alternating traversals of $\mathcal P_1$ ``push in" the same vertex in opposite directions across $\mathcal P_1$ (see Figure \ref{fig:theta_opt_2}); this contradicts $n_2 = n_3 = 6$ in all possible settings. 

\begin{figure}[ht]
    \centering
    \includegraphics[width=0.95\textwidth]{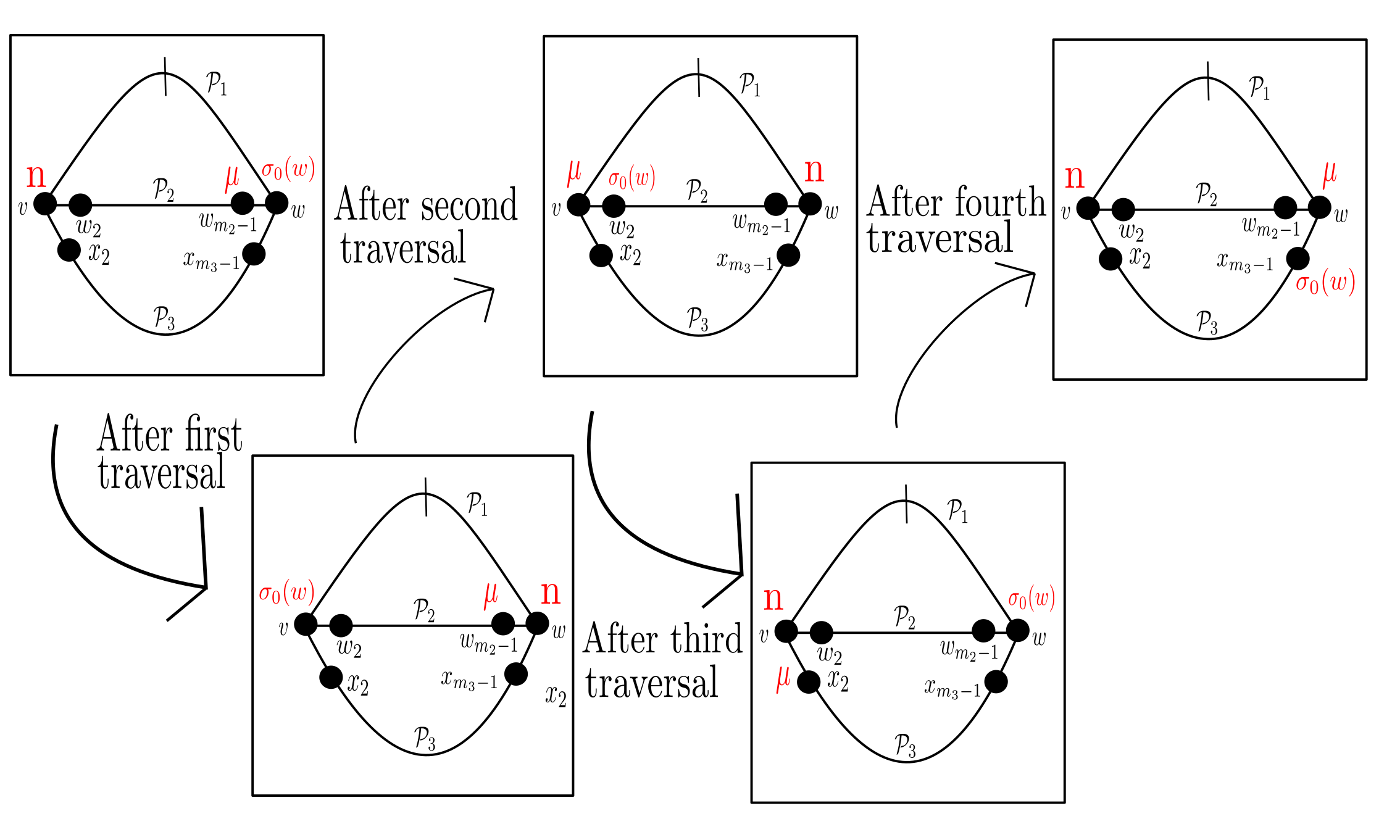}
    \caption{Case where we assume $n_1 = 4$, and $n_2 = n_3 = 6$. By the observations described in the text, the swap sequence $\mathcal S'$ is determined entirely by what happens between the first and second traversals of $\mathcal P_1$, which can then be used to derive a contradiction on $n_2 = n_3 = 6$. Recall that we can assume the first two traversals are from $v$ to $w$, and the latter two from $w$ to $v$. In the example given in this figure, $n$ swaps across $\mathcal P_2$ once prior to the second traversal of $\mathcal P_1$, so that $\mu = \sigma_0(w_{m_2-1})$ must be swapped from $v$ to $w$ on the fourth traversal. In this case, we must have $n_2 = n_3 = 3$ to return $\sigma_0(w)$ and $\mu$ to their original positions in $\sigma_0$, a contradiction. Similarly, if $d$ is the number of times we traverse $\mathcal P_2$ (assume this is traversed more times than $\mathcal P_3$ prior to the second traversal of $\mathcal P_1$) after the first traversal of $\mathcal P_1$, then it is easy to see, following the diagram above, that $n_2 = 4d-1$, yielding a contradiction for all $d \geq 2$.}
    \label{fig:theta_opt_2}
\end{figure}

This contradicts $\mathcal S'$ as a trajectory with $\phi \cdot \eta < 3(|V(\mathcal C_1)| + |V(\mathcal C_2)| + |V(\mathcal C)|)$, so for any trajectory $\mathcal S'$ achieving a cycle in $\FS(X, \Star_n)$, $\phi \cdot \eta \geq 3(|V(\mathcal C_1)| + |V(\mathcal C_2)| + |V(\mathcal C)|)$. 
\end{proof}

\noindent Call $\theta$-graphs with an edge path a $\tilde{\theta}$-graph; in general, we shall use tilde to signify the existence of an edge path in the graph. The preceding results yield the following subset of $\mathcal G$.

\begin{theorem} \label{girth_subset}
Consider the set $\tilde{\mathcal G}$ of graphs that includes the following.
\small
\begin{itemize}
    \item All cycle graphs.
    \item Barbell graphs $[\mathcal C_1, \mathcal C_2, \mathcal P]$ with $2(|V(\mathcal C_1)| + |V(\mathcal C_2)| + 2|E(\mathcal P)|) < \min\{|V(\mathcal C_1)|(|V(\mathcal C_1)|-1), |V(\mathcal C_2)|(|V(\mathcal C_2)|-1)\}$.
    \item $\theta$-graphs with $2(|V(\mathcal C_1)| + |V(\mathcal C_2)| + |V(\mathcal C)|) < \min\{|V(\mathcal C_1)|(|V(\mathcal C_1)|-1), |V(\mathcal C_2)|(|V(\mathcal C_2)|-1), |V(\mathcal C)|(|V(\mathcal C)|-1)\}$.
    \item $\tilde{\theta}$-graphs with $3(|V(\mathcal C_1)| + |V(\mathcal C_2)| + |V(\mathcal C)|) < \min\{|V(\mathcal C_1)|(|V(\mathcal C_1)|-1), |V(\mathcal C_2)|(|V(\mathcal C_2)|-1), |V(\mathcal C)|(|V(\mathcal C)|-1)\}$.
\end{itemize}
\normalsize
\noindent Then $\tilde{\mathcal G} \subset \mathcal G$, and $\tilde{\mathcal G}$ contains all cycle, barbell, and $\theta$-graphs that are in $\mathcal G$.
\end{theorem}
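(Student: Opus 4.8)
The plan is to argue from the second characterization of $\mathcal G$ recorded after Problem \ref{girth_problem}: a connected $X$ with $g(X)<\infty$ lies in $\mathcal G$ exactly when no proper subgraph of $X$ on at most $|V(X)|$ vertices has the same $\FS(\cdot,\Star)$-girth. The organizing observation I would establish first is a reduction: by Proposition \ref{fs_decomp} the girth of $\FS(G,\Star_k)$ for any $G$ is the girth of $\FS$ of some connected component of $G$ paired with the star, and combining this with Proposition \ref{subgraph_lemma} and the fact (noted just before the theorem) that a girth-realizing cycle of $\FS(X,\Star_n)$ has $X_{\mathcal C'}$ connected with $\delta(X_{\mathcal C'})\ge 2$ and $g(X_{\mathcal C'})<\infty$, we get that $g(\FS(X,\Star_n))$ equals the minimum of $g(\FS(H,\Star_{|V(H)|}))$ taken over all connected subgraphs $H\subseteq X$ with $\delta(H)\ge 2$ and $g(H)<\infty$; the girth of any proper subgraph of $X$ is then, in the same way, a minimum over a \emph{subfamily} of these $H$ missing at least $X$ itself.

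Now I would enumerate the essential subgraphs $H$ for each family in $\tilde{\mathcal G}$. For $X=\Cycle_n$ the only candidate is $X$ (a proper subgraph of a cycle is a union of paths, so acyclic, and its $\FS$ with a star is acyclic by Corollary \ref{girth_inf}). For a barbell $[\mathcal C_1,\mathcal C_2,\mathcal P]$, degree propagation forces any $H$ meeting the interior of $\mathcal P$ to equal $X$, while an $H$ avoiding that interior must be $\mathcal C_1$ or $\mathcal C_2$; so the candidates are $\mathcal C_1,\mathcal C_2,X$. For $\theta$- and $\tilde\theta$-graphs the candidates are $\mathcal C_1,\mathcal C_2,\mathcal C$ and $X$. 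By Lemma \ref{star_cycle} each cyclic candidate $\mathcal C_i$ contributes $g(\FS(\mathcal C_i,\Star))=|V(\mathcal C_i)|(|V(\mathcal C_i)|-1)$, while $H=X$ contributes $2(|V(\mathcal C_1)|+|V(\mathcal C_2)|+2|E(\mathcal P)|)$ (Proposition \ref{barbell_bd}), $2(|V(\mathcal C_1)|+|V(\mathcal C_2)|+|V(\mathcal C)|)$ (Proposition \ref{theta_bd}(1)), or $3(|V(\mathcal C_1)|+|V(\mathcal C_2)|+|V(\mathcal C)|)$ (Proposition \ref{theta_bd}(2)). The defining inequality of $\tilde{\mathcal G}$ in each case says precisely that the value at $H=X$ is strictly smaller than every $|V(\mathcal C_i)|(|V(\mathcal C_i)|-1)$, so the minimum over all $H$ is attained only at $H=X$; since no proper subgraph of $X$ can be $X$, no proper subgraph realizes $g(\FS(X,\Star_n))$, i.e.\ $X\in\mathcal G$. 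This proves $\tilde{\mathcal G}\subseteq\mathcal G$.

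For the reverse statement, cycle graphs lie in $\tilde{\mathcal G}$ by fiat. Let $X$ be a barbell, $\theta$- or $\tilde\theta$-graph lying in $\mathcal G$, and set $A_i=|V(\mathcal C_i)|(|V(\mathcal C_i)|-1)$ over its constituent cycles. Each $\mathcal C_i$ is a proper subgraph, so $g(\FS(X,\Star_n))\le A_i$ (Proposition \ref{subgraph_lemma}), and membership in $\mathcal G$ forbids equality; hence $g(\FS(X,\Star_n))<\min_i A_i$. By the reduction, the minimizing $H$ is some $\mathcal C_i$ or $X$; it cannot be a $\mathcal C_i$ (that would give $g(\FS(X,\Star_n))=A_i$), so the minimizer is $H=X$. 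Invoking the condition-free version of the lower bounds in Propositions \ref{barbell_bd} and \ref{theta_bd} — that \emph{every} cycle of $\FS(X,\Star_n)$ with induced subgraph all of $X$ has length at least the construction length $2(|V(\mathcal C_1)|+|V(\mathcal C_2)|+2|E(\mathcal P)|)$, resp.\ $2(|V(\mathcal C_1)|+|V(\mathcal C_2)|+|V(\mathcal C)|)$ or $3(|V(\mathcal C_1)|+|V(\mathcal C_2)|+|V(\mathcal C)|)$ — together with the matching upper bound pins $g(\FS(X,\Star_n))$ to that construction length. The inequality $g(\FS(X,\Star_n))<\min_i A_i$ is then exactly the defining condition of $\tilde{\mathcal G}$, so $X\in\tilde{\mathcal G}$, completing the proof.

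The step I expect to be the real obstacle is extracting that condition-free lower bound from Propositions \ref{barbell_bd} and \ref{theta_bd}, which as stated carry numerical side-hypotheses. Re-reading those proofs, the side-hypotheses are used only to rule out the competing possibility that the girth is realized by a cycle confined to a single constituent $\mathcal C_i$ — a possibility already excluded in the present setting by the hypothesis that the minimizer is $H=X$ (equivalently, by $X\in\mathcal G$). Once that alternative is off the table, the counting of how many times each cut-path of $X$ must be crossed, together with the cost of each crossing and of each required excursion through a cycle, yields the stated lower bound with no numerical assumption; the only delicate bookkeeping is the ternary-word argument in the $\tilde\theta$ case. Everything else is a direct assembly of the propositions already proved above.
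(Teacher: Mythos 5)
Your architecture is the same as the paper's: Theorem \ref{girth_subset} is presented there as a direct assembly of Proposition \ref{one_cycle_subgph}, Lemma \ref{star_cycle}, and Propositions \ref{barbell_bd} and \ref{theta_bd}, organized through exactly the reduction you describe (a girth-realizing cycle induces a connected subgraph with minimum degree $2$ and finite girth, and for each family the only such subgraphs are the constituent cycles and $X$ itself). Your forward inclusion $\tilde{\mathcal G}\subseteq\mathcal G$ is correct and complete: under the numerical hypotheses, Propositions \ref{barbell_bd} and \ref{theta_bd} pin $g(\FS(X,\Star_n))$ to the construction length, which is strictly below every $|V(\mathcal C_i)|(|V(\mathcal C_i)|-1)$, so no proper subgraph can realize the girth.

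The gap is exactly where you flagged it, and your dismissal of it is too quick. For the reverse containment you need the \emph{condition-free} lower bound that every cycle of $\FS(X,\Star_n)$ whose induced subgraph is all of $X$ has length at least the construction length $L$ (or at least $\min_i |V(\mathcal C_i)|(|V(\mathcal C_i)|-1)$ whenever $L$ exceeds that minimum). For barbells this does follow from the crossing-parity argument with no numerical input. But for $\theta$- and $\tilde\theta$-graphs it is not true that the side-hypotheses of Proposition \ref{theta_bd} are used only to exclude trajectories confined to a single constituent cycle: they are invoked at the outset to deduce $\min_i|V(\mathcal C_i)|\ge 8$, hence $p_2\ge 4$ (respectively $p_2,p_3\ge 10$ in the $\tilde\theta$ case), and the entire word-counting argument --- $n_i\ge 4$ whenever $p_i\ge 4$, the exclusion of $n_i=3$ and $n_i = 5$, the case analysis when some $n_i=4$ --- leans on these lower bounds for the path lengths. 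Concretely, a $\theta$-graph with $p_1=p_2=p_3=3$ violates the defining inequality ($L=36$ while $\min_i |V(\mathcal C_i)|(|V(\mathcal C_i)|-1)=30$), so for the second claim of the theorem you must show it is not in $\mathcal G$, which requires ruling out a full-graph trajectory with $\phi\cdot\eta=3(n_1+n_2+n_3)<30$, e.g.\ with traversal counts $(2,3,3)$ or $(2,2,4)$; nothing in the proof of Proposition \ref{theta_bd} addresses such small-parameter configurations. These cases need a separate argument (or explicit verification) before the reverse containment for $\theta$- and $\tilde\theta$-graphs is established. In fairness, the paper itself gives no proof beyond citing the preceding results, so your reconstruction is no less rigorous than the source --- but as written, this step is asserted rather than proved.
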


\subsection{Superset of $\mathcal G$}
Our main result in this section is showing that any graph in $\mathcal G$ cannot contain a proper barbell subgraph, from which we can significantly reduce the possible graphs that it contains. Towards this, we begin with some background. Recall the notion of a $\mathcal P$-induced subgraph from Definition \ref{path_induced}, for which we referred to graphs $X_{\mathcal C}$ with $\mathcal C$ a cycle in $\FS(X, \Star_n)$ that achieved its girth: we shall also refer to $Y_{\mathcal C}$, the induced subgraph of $Y=\Star_n$ with vertex set consisting of $\sigma(X_{\mathcal C})$ for any $\sigma \in V(\mathcal C)$ (i.e. all elements in $V(\Star_n)$ that are involved in the cycle $\mathcal C$ in $\FS(X, \Star_n)$).

We establish some elementary, but important observations concerning $X_{\mathcal C}$; in what follows, fix $\mathcal C$ to be a cycle subgraph in $\FS(X, \Star_n)$. We label the vertices $V(\mathcal C) = \{\sigma_i\}_{i=0}^{|V(\mathcal C)|-1}$ such that $\{\sigma_{i-1}, \sigma_i\} \in E(\mathcal C)$ for $i \in [|V(\mathcal C)|-1]$ and $\{\sigma_0, \sigma_{|V(\mathcal C)|-1}\} \in E(\mathcal C)$, and the corresponding $\mathcal C$-induced subgraph $X_\mathcal C$ of $X$. Given the path $\mathfrak n$ takes around $X_{\mathcal C}$, the following three quantities correspond to $|V(\mathcal C)|$; studying different choices will be convenient for different arguments.
\begin{enumerate}
    \item \textit{Sum of visits to each vertex.} A vertex $v \in V(X_\mathcal C)$ is said to be visited at index $i \in [\ell]$ if $\sigma_i(v) = \mathfrak n$.
    \item \textit{Sum of traversals of each edge.} An edge $\{v, w\} \in E(X_\mathcal C)$ is said to be traversed at index $i \in [\ell]$ if $\sigma_{i-1}$ and $\sigma_i$ differ on an $(X, \Star_n)$-friendly swap over $\{v, w\}$. (For $i = 1$, $i+1$ corresponds to $\ell$.)
    \item \textit{Sum of swaps with non-central vertices of $\Star_n$ in $Y_\mathcal C$.} A vertex $v \in V(Y_\mathcal C) \setminus \{\mathfrak n\}$ is said to be swapped at index $i \in [\ell]$ if $\{\sigma_{i-1}, \sigma_i\} \in E(\mathcal C)$ corresponds to an $(X, \Star_n)$-friendly swap by swapping $\mathfrak n$ with $v$.
\end{enumerate}
From here, we observe that $|V(\mathcal C)| \geq 2|V(X_\mathcal C)|$, since every vertex $v \in V(X_\mathcal C)$ must be visited at least twice (to swap out and back in the vertex originally on $v$), and the sum of the number of visits to each vertex is equal to $|V(\mathcal C)|$. Recall from Remark \ref{girth_basics} that the path taken by any $y \in V(\Star_n)$ during $\sigma$ either ``folds in on itself" or yields a simple cycle: in this latter setting, we shall denote this simple cycle by $y_\mathcal C$.

Corresponding to the third quantity above, let $(v_1, \dots, v_{|V(Y_\mathcal C)|-1})$ be some enumeration of $V(Y_\mathcal C) \setminus \{\mathfrak n\}$, and take $(y_1, \dots, y_{|V(Y_\mathcal C)|-1}) \in \mathbb N^{|V(Y_\mathcal C)|-1}$ so that for $i \in [|V(Y_\mathcal C)|-1]$, $y_i$ denotes the number of swaps $\mathfrak n$ makes with $v_i$ over $\mathcal C$. Then $y_i \geq 2$ for all $i \in [|V(Y_\mathcal C)|-1]$ (every non-central vertex must be swapped with $\mathfrak n$ at least twice: once to swap it out of its original position, and once to swap it back), and $|V(\mathcal C)| = \sum_{i=1}^{|V(Y_\mathcal C)|-1} y_i \geq 2(|V(Y_\mathcal C)|-1) + 2$. In particular, we cannot have $y_i = 2$ for all $i \in [|V(Y_\mathcal C)|-1]$, so there necessarily exists $v_i \in V(Y_\mathcal C) \setminus \{\mathfrak n\}$ swapped strictly more than twice. We denote the corresponding vector of swap counts by $y_{\mathcal C} = (y_1, \dots, y_{|V(Y_\mathcal C)|-1})$, and its elementwise sum by $||y_{\mathcal C}||_1$. 

We finally remark that since $\FS(X, Y)$ is bipartite, if we want to deduce an upper bound of the form $|V(\mathcal C)| \leq 2k$, it suffices to show that $|V(\mathcal C)| < 2(k+1)$. Similarly, we can deduce a lower bound $|V(\mathcal C)| \geq 2(k+1)$ by showing $|V(\mathcal C)| > 2k$.

\begin{theorem} \label{barbell_proof}
Let $\mathfrak C$ be the set of all cycle subgraphs in $\FS(X, \Star_n)$ that achieve its girth, and for $\mathcal C \in \mathfrak C$, let $X_{\mathcal C}$ be the $\mathcal C$-induced subgraph of $X$. Denote the set of all such subgraphs as $\mathcal X = \{X_{\mathcal C}\}_{\mathcal C \in \mathfrak C}$, and say there exists some $X_{\mathcal C} \in \mathcal X$ with a barbell subgraph. Then there exists a cycle or barbell graph in $\mathcal X$.
\end{theorem}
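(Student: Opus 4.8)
The plan is to argue by contradiction: suppose no member of $\mathcal X$ is a cycle or a barbell graph, fix a member $X_{\mathcal C}\in\mathcal X$ containing a barbell subgraph $B=[\mathcal C_1,\mathcal C_2,\mathcal P]$, and write $b_i=|V(\mathcal C_i)|$, $p=|E(\mathcal P)|$, and $g=g(\FS(X,\Star_n))=|V(\mathcal C)|$. The first step is a structural reduction. Since $X_{\mathcal C}$ contains the two edge-disjoint cycles $\mathcal C_1,\mathcal C_2$ of $B$, its cycle space has dimension at least two, so $|E(X_{\mathcal C})|\ge|V(X_{\mathcal C})|+1$; and if $|E(X_{\mathcal C})|=|V(X_{\mathcal C})|+1$, then suppressing the degree-$2$ vertices of $X_{\mathcal C}$ yields a connected multigraph on at most two vertices with minimum degree at least $3$, which one checks is a barbell or a $\theta$-graph. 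No $\theta$-graph contains a barbell subgraph, since its only cycle subgraphs are the three cycles formed by pairs of the internally-disjoint paths joining its two degree-$3$ vertices, and any two of these share an entire such path (so are neither vertex-disjoint nor meet in a single vertex). Hence in that case $X_{\mathcal C}$ would itself be a barbell, contradicting our assumption; so $|E(X_{\mathcal C})|\ge|V(X_{\mathcal C})|+2$, and in particular $B$ is a proper subgraph of $X_{\mathcal C}$.

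The second step pins $g$ strictly below the bounds coming from the subgraphs already inside $X_{\mathcal C}$. Each $\mathcal C_i$ is a cycle subgraph of $X$, so by Lemma \ref{star_cycle} and Proposition \ref{subgraph_lemma} the graph $\FS(X,\Star_n)$ contains cycles of lengths $b_1(b_1-1)$ and $b_2(b_2-1)$, and the swap sequence in the proof of Proposition \ref{barbell_bd} exhibits a cycle of length $2(b_1+b_2+2p)$ whose induced subgraph is $B$ itself. Each of these three lengths is at least $g$. If $b_i(b_i-1)=g$ for some $i$, then a girth cycle of $\FS(X,\Star_n)$ lies inside the embedded copy of $\FS(\mathcal C_i,\Star_{b_i})$ and hence has induced subgraph the cycle $\mathcal C_i$ --- contradicting our assumption; and if $2(b_1+b_2+2p)=g$, then $B\in\mathcal X$ --- again a contradiction. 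Therefore $g<\min\{b_1(b_1-1),\,b_2(b_2-1),\,2(b_1+b_2+2p)\}$, and more generally the same reasoning shows that no cycle subgraph of $X$ at all can realize the girth $g$.

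The crux, and the step I expect to be the main obstacle, is extracting a contradiction from ``$X_{\mathcal C}$ properly contains the barbell $B$, yet $g<2(b_1+b_2+2p)$''. Here the plan is to pass to an extremal choice --- for instance a girth cycle $\mathcal C$ whose induced subgraph contains a barbell and has $|V(X_{\mathcal C})|$ minimal --- and then to analyze the swap sequence $\mathcal S$ realizing $\mathcal C$ through the three decompositions of $|V(\mathcal C)|$ recorded before the theorem statement (as a sum of vertex-visits, of edge-traversals, and of swaps with non-central tokens). Since $\mathfrak n$ is forced to traverse every edge of $X_{\mathcal C}\setminus B$ while the barbell $B$ admits the cheap traversal of Proposition \ref{barbell_bd}, the aim is to show that the portion of $\mathfrak n$'s trajectory that leaves $B$ (or that over-traverses $\mathcal P$) can be excised and replaced by a strictly shorter closed sub-trajectory supported on a cycle or barbell subgraph of $X_{\mathcal C}$, which would yield an $\FS(X,\Star_n)$-cycle of length $<g$ and contradict that $\mathcal C$ achieves the girth. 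The delicate point, and where I expect the bulk of the work to lie, is that this rerouting has to be performed on the swap sequence rather than merely on $\mathfrak n$'s walk in $X_{\mathcal C}$: the replacement sub-trajectory must still return every non-central vertex of $\Star_n$ to its original position, which is precisely the ``rotation'' bookkeeping appearing in the proof of Proposition \ref{theta_bd}, here to be carried out around a barbell rather than a $\theta$-graph.
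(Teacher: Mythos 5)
Your setup is sound and matches the paper's: argue by contradiction, fix $X_{\mathcal C}$ with a proper barbell subgraph $\mathcal B = [\mathcal C_1, \mathcal C_2, \mathcal P]$, and observe that the girth $g$ must be strictly smaller than both $\min\{b_1(b_1-1), b_2(b_2-1)\}$ and the length $2(b_1+b_2+2|E(\mathcal P)|)$ of the explicit barbell cycle from Proposition \ref{barbell_bd} (your first structural step, via the cycle space, is harmless but unnecessary: under the contradiction hypothesis $X_{\mathcal C}$ is not itself a barbell, so $\mathcal B$ is automatically proper). However, the part you correctly flag as ``the crux'' is genuinely the entire content of the theorem, and your proposal does not carry it out; it only states an intention. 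Moreover, the excision strategy you sketch faces a real obstruction that you yourself half-identify: a contiguous portion of the girth cycle's swap sequence is almost never itself a closed trajectory, since the non-central tokens it displaces are generally returned to position only by swaps occurring elsewhere in the sequence. So ``excise and replace by a shorter closed sub-trajectory supported on a cycle or barbell'' is not an operation you have shown to be available, and the paper does not perform it except in one narrow situation (when the connecting path $\mathcal P$ is crossed only twice, so that the portion of $\mathcal S$ between the two crossings really is a closed trajectory confined to one side).

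What the paper actually does instead is a direct counting argument organized as a case analysis on $|E(\mathcal P)| \in \{0, 1, \geq 2\}$. The engine is this: the assumed inequality $|V(\mathcal C)| < 2(b_1+b_2+2|E(\mathcal P)|)$, combined with the lower bound $|V(\mathcal C)| \geq 2|V(X_{\mathcal C})|$, forces all but at most one or two vertices of $X_{\mathcal C}$ to be visited by $\mathfrak n$ exactly twice. At a vertex of degree at least $3$ visited exactly twice, all incident edges are traversed, so the token $b$ swapped onto that vertex after the first visit cannot return by retracing its steps; its trajectory $b_{\mathcal C}$ therefore contains a cycle, whose length is at least $6$ or $7$ (shorter cycles in $X_{\mathcal C}$ would yield a shorter girth cycle via Proposition \ref{quad_bd}). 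Hence $b$ is swapped at least $6$ or $7$ times, and summing the entries of $y_{\mathcal C}$ pushes $|V(\mathcal C)|$ back up to at least $|V(\mathcal C_{\mathcal B})|$, a contradiction. The case $|E(\mathcal P)| \geq 2$ additionally requires a structural decomposition $[\mathcal C_1^*, \mathcal C_2^*, \mathcal P^*]$ of $X_{\mathcal C}$ (using minimality of $|E(\mathcal P)|$ and the presence or absence of a second connecting path) to show every vertex of $\mathcal P$ is visited at least four times. None of this counting machinery, nor the case analysis it lives in, appears in your proposal, so the proof is incomplete at its central step.
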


\begin{proof}
We shall proceed by contradiction, for which we assume that all barbell subgraphs of graphs in $\mathcal X$ are proper, and there do not exist cycles in $\mathcal X$. Take $X_\mathcal C \in \mathcal X$ containing a proper barbell subgraph $\mathcal B$ that has decomposition $[\mathcal C_1, \mathcal C_2, \mathcal P]$. We must have $|V(\mathcal C_1)|, |V(\mathcal C_2)| \geq 5$, as $\mathcal B$ has at least $2k-1$ vertices (letting $k$ be the size of the smaller cycle in the barbell) so $|V(\mathcal C)| \geq 4k-2$. Since $k(k-1) \leq 4k-2$ for $k \geq 4$, if either $\mathcal C_1$ or $\mathcal C_2$ had at most $4$ vertices, we could strictly traverse the cycle subgraph, getting a smaller cycle than $\mathcal C$ in $\FS(X, \Star_n)$. In the case that $\mathcal P$ has at least one edge, we can assume $|V(\mathcal C_1)|, |V(\mathcal C_2)| \geq 6$ by similar reasoning.

We shall broadly argue as follows. Take a barbell subgraph $\mathcal B$ of $X_\mathcal C$ such that $\mathcal P$ has smallest possible length: we show that the swap sequence taken by $\mathfrak n$ along $X_\mathcal C$ to achieve $\mathcal C$ cannot ``improve" the swap sequence along $\mathcal B$ given by Proposition \ref{barbell_bd}, which we shall say has corresponding cycle subgraph in $\FS(X, \Star_n)$ denoted by $\mathcal C_\mathcal B$ (i.e. we show that $|V(\mathcal C)| \geq |V(\mathcal C_\mathcal B)|$). This contradicts either $|V(\mathcal C)| = g(\FS(X, \Star_n))$ (if $|V(\mathcal C)| > |V(\mathcal C_\mathcal B)|$) or the noninclusion of $\mathcal B$ in $\mathcal X$ (if $|V(\mathcal C)| = |V(\mathcal C_\mathcal B)|$). In the following argument, we handle small cases for the length of $\mathcal P$ (the number of edges of $\mathcal P$) before we argue generally on all $\mathcal P$ with sufficiently large length. We shall also frequently refer to the swap sequences $\mathcal S$ and $\mathcal S_\mathcal B$ corresponding to the cycle subgraphs $\mathcal C$ and $\mathcal C_\mathcal B$ of $\FS(X, \Star_n)$, respectively (see the footnote under Remark \ref{girth_basics}), as it will occasionally be more convenient to refer directly to the swaps applied in the cycle $\mathcal C$.

\paragraph{$\mathcal P$ has length $0$.}

Here, $\mathcal C_1$ and $\mathcal C_2$ share exactly one vertex, and $|V(\mathcal C_{\mathcal B})| = 2(|V(\mathcal B)|+1)$. If $\mathcal B$ were a proper subgraph of $X_\mathcal C$, then certainly $V(\mathcal B) = V(X_\mathcal C)$; if this were not true, we would have $|V(\mathcal C)| \geq 2|V(X_\mathcal C)| \geq 2(|V(\mathcal B)|+1)$, contradicting the construction of $\mathcal X$ either by the exclusion of $\mathcal B$ or by $|V(\mathcal C)| > |V(\mathcal C_\mathcal B)|$.) Hence, $X_\mathcal C$ can only add edges to $\mathcal B$, so that $V(X_\mathcal C) = V(\mathcal B)$.

Since $|V(\mathcal C_\mathcal B)| = 2(|V(\mathcal B)|+1) > |V(\mathcal C)| \geq 2|V(\mathcal B)|$, for $\mathcal S$ to improve $\mathcal S_\mathcal B$, $\mathfrak n$ must visit the vertex $v \in V(X_\mathcal C)$ of degree $4$ precisely twice; assume this is done over $\mathcal S$. Assume (without loss of generality) that over $\mathcal S$, $\mathfrak n$ does not start on $v$ or a neighbor of $v$, and that $a \in V(Y_\mathcal C)$ is originally on $v$. The two visits to $v$, which must involve all four edges incident to $v$, necessarily traverses each incident edge exactly once. In order to finish at $v$, $a$ must have taken some other sequence of swaps between the first and second visits to $v$. Thus, we can extract a cycle subgraph $a_\mathcal C$ with length at least $5$, so the corresponding entry in $y_\mathcal C$ is at least $5$. But then $|V(\mathcal C)| = ||y_\mathcal C||_1 \geq ||y_{\mathcal C_\mathcal B}||_1 = |V(\mathcal C_\mathcal B)|$, a contradiction, so $\mathcal B$ cannot be a proper subgraph of $X_\mathcal C$.

\paragraph{$\mathcal P$ has length $1$.}
As remarked, we have (for this and all following cases) $|V(\mathcal C_1)| \geq 6, |V(\mathcal C_2)| \geq 6$. Assume that we choose $\mathcal B$ such that $|V(\mathcal C_1)| + |V(\mathcal C_2)|$ is minimal. Here, $\mathcal P$ is an edge, and $\mathcal S_\mathcal B$ consists of $2|V(\mathcal B)| + 4$ swaps (the two degree $3$ vertices are visited precisely four times, while all other vertices are visited twice). When constructing $X_C$ from its proper subgraph $\mathcal B$, either $V(X_\mathcal C) = V(\mathcal B)$, or $V(X_\mathcal C)$ has one extra vertex without causing the inequality chain $|V(\mathcal C)| \geq 2|V(X_\mathcal C)| \geq 2|V(\mathcal B)| + 4$ or contradicting the strict improvement of $\mathcal S$ over $\mathcal S_\mathcal B$. We break into cases based on these possibilities. 

\subparagraph{Case 1: $V(X_\mathcal C) = V(\mathcal B)$.}

Here, we can only add edges to $\mathcal B$ to achieve $X_\mathcal C$. By assumption on $\mathcal B$ having minimal $|V(\mathcal C_1)| + |V(\mathcal C_2)|$, edges can only be added between $V(\mathcal C_1)$ and $V(\mathcal C_2)$, and there must be at least one additional edge, since $\mathcal B$ is proper in $X_\mathcal C$. Additional edges must be between two vertices of degree $2$ in $\mathcal B$; if we added an edge to a vertex with degree $3$ and a vertex of degree $2$ on the other cycle, there would exist a barbell subgraph of $X_\mathcal C$ with connecting path of length $0$. Hence, there are at least four vertices with degree $3$ in $V(X_\mathcal C)$. The sequence $\mathcal S_\mathcal B$ involves $2|V(\mathcal B)| + 4$ swaps: for $2|V(\mathcal B)| \leq |V(\mathcal C)| < 2|V(\mathcal B)| + 4$ (for $\mathcal S$ to strictly improve $\mathcal S_\mathcal B$), there can exist at most two vertices visited more than twice via $\mathcal S$ on $X_\mathcal C$. In particular, there must exist at least two vertices incident to edges between vertices of $\mathcal C_1$ and $\mathcal C_2$ that are visited exactly twice. Denote these by $v_1$ and $v_2$, and assume without loss of generality that $n \in V(\Star_n)$ starts on neither $v_1$, $v_2$, nor any vertex incident to them as it traverses $X_\mathcal C$.

Let $a \in V(Y_\mathcal C)$ be the vertex initially on $v_1$, and $b \in V(Y_\mathcal C)$ denote the value swapped onto $v_1$ after the first visit. The two visits of $\mathfrak n$ onto $v_1$ traverse all incident edges to $v_1$, so $b$ returns to a different position after the second visit to $v_1$. Hence, $b_C$ yields a cycle of length at least $6$ in $X_\mathcal C$ (anything smaller would give a cycle in $\FS(X, \Star_n)$ smaller than the lower bound of $2|V(X_\mathcal C)| \geq 24$ on the girth $|V(\mathcal C)|$). Thus, $b$ must be swapped at least $6$ times. Similarly, consider the vertex $v_2$, for which there exists some $b' \in V(\Star_n)$ so that $b'_\mathcal C$ yields a cycle of length at least $6$ in $X_\mathcal C$ (possibly the same as that from $b_\mathcal C$). If $b \neq b'$, there exist at least two vertices of $\Star_n$ which traverse a cycle of length at least $6$, so that $|V(\mathcal C)| = ||y_\mathcal C||_1 > ||y_{\mathcal C_\mathcal B}||_1 = 2|V(\mathcal B)| + 4 = |V(\mathcal C_\mathcal B)|$, a contradiction on $\mathcal C$ achieving the girth of $\FS(X, \Star_n)$. This inequality also must hold whenever $b = b'$, but with $b$ swapped strictly more than $6$ times.

\medskip

It remains to be shown that if $b = b'$ and $b$ is swapped exactly $6$ times, then $\mathcal S$ cannot improve $\mathcal S_\mathcal B$. Here, we must have that $b_\mathcal C$ is a cycle of length $6$. We also must have $|V(\mathcal B)| \leq 14$ (if not, i.e. $|V(\mathcal B)| \geq 15$, we could strictly traverse $b_\mathcal C$ for a cycle in $\FS(X, \Star_n)$ with size at most $|V(\mathcal C)| \geq 2|V(\mathcal B)| \geq 30$). Note that $\mathcal S$ cannot achieve a number of swaps equal to the lower bound $2|V(\mathcal B)|$ since $b$ swapped $6$ times yields $||y_\mathcal C||_1 \geq 2|V(\mathcal B)| + 2$, so this can be strengthened to $|V(\mathcal B)| \leq 13$ (if $|V(\mathcal B)| = 14$, then $||y_\mathcal C||_1 \geq 30$, so we can traverse $b_\mathcal C$ for a cycle in $\FS(X, \Star_n)$ achieving the girth by Corollary \ref{quad_bd}). Recalling that $|V(\mathcal C_1)|, |V(\mathcal C_2)| \geq 6$, the only possibilities for $\mathcal C_1$ and $\mathcal C_2$ are that they are both $6$-cycles, or one is a $6$-cycle and the other a $7$-cycle. We can also assume that any $y \in V(Y_\mathcal C) \setminus \{b, n\}$ is swapped exactly twice, since we would otherwise have that $||y_\mathcal C||_1 \geq 2|V(\mathcal B)|+4$, contradicting either $|V(\mathcal C)| = g(\FS(X, \Star_n))$ (if a strict inequality) or $\mathcal B \notin \mathcal X$ (if an equality), as $\mathcal S_\mathcal B$ here yields a cycle in $\FS(X, \Star_n)$ with size $2|V(\mathcal B)| + 4$.

Observe that no vertex in $V(X_\mathcal C)$ has degree greater than $3$, as this would yield the existence of a barbell subgraph of $X_\mathcal C$ with length $0$ connecting path (recall that edges can only be added between $V(\mathcal C_1)$ and $V(\mathcal C_2)$). Any $y \in V(Y_C) \setminus \{b, n\}$ is swapped exactly twice, so $y$ must traverse a specific edge in $E(X_\mathcal C)$ twice, and these are the only traversals of this edge (an additional traversal would require swapping $y$ more than twice). These edges must be disjoint from $E(b_\mathcal C)$ (as $b$ is, at some point, swapped along every edge in $E(b_\mathcal C)$), so considering the sum of edge traversals yields $|V(\mathcal C)| \geq 2(|V(\mathcal B)|-2) + 6 = 2(|V(\mathcal B)|+1)$ (the expression $|V(\mathcal B)|-2$ is from the edges corresponding to vertices in $V(Y_\mathcal C) \setminus \{b, n\}$, and the additional $6$ from considering the edges of the $6$-cycle $b_C$); in particular, note that $|E(X_\mathcal C)| \geq |V(\mathcal B)| + 4$. Enumerate $V(b_\mathcal C)$ by $\{1, \dots, 6\}$: any $y \in V(Y_\mathcal C) \setminus \{b, n\}$ is swapped exactly twice along a particular edge, from which it follows that all vertices in $b_\mathcal C$ must have degree\footnote{Specifically, assume without loss of generality that $\mathfrak n$ is not initially placed on $b_\mathcal C$. The vertices initially upon $\{1, \dots, 6\}$ in $V(Y_\mathcal C) \setminus \{b\}$ must traverse an edge outside the cycle $b_\mathcal C$, as every such edge in $E(b_\mathcal C)$ is used for a swap between $b$ and $\mathfrak n$ at some point. Furthermore, if $b$ starts on a vertex of $V(b_\mathcal C)$, then the swap after the first swap between $\mathfrak n$ and $b$ must leave $b_\mathcal C$, so this vertex must also have degree $3$.} $3$, and have third incident vertices all distinct since a cycle with size at most $5$ cannot exist in $X_\mathcal C$: label these vertices $\{1', \dots, 6'\}$. Figure \ref{fig:b_swap_6} depicts this subgraph of $X_\mathcal C$.

\begin{figure}[ht]
    \centering
    \includegraphics[width=0.35\textwidth]{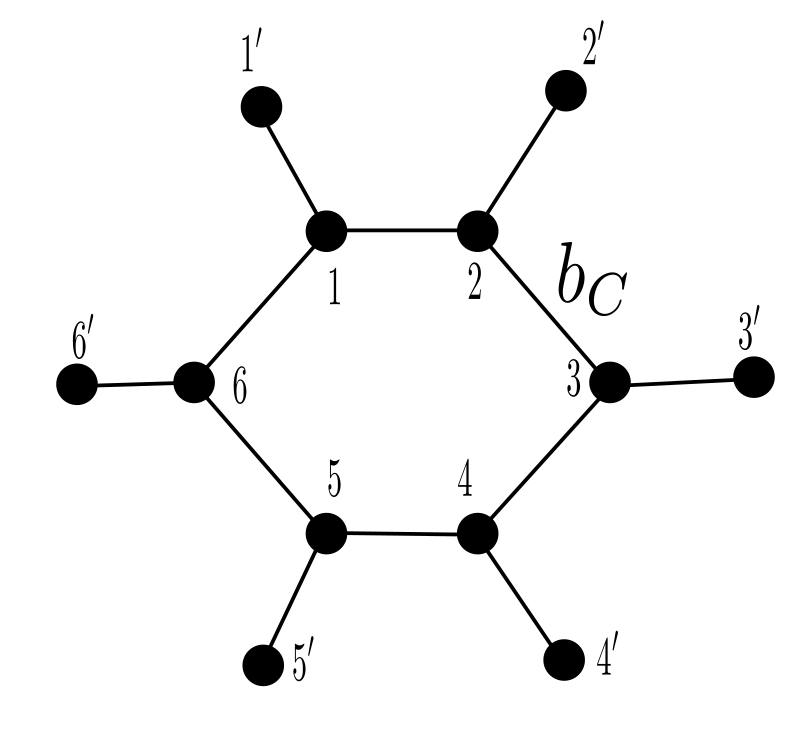}
    \caption{The setup for the case where $b=b'$ and $b$ is swapped exactly $6$ times. The following depicts what must be isomorphic to a subgraph of $X_\mathcal C$ under this setting, as discussed in the preceding text. Considering the two possibilities $|V(\mathcal C_1)| = |V(\mathcal C_2)| = 6$ and (without loss of generality) $|V(\mathcal C_1)| = 6$, $|V(\mathcal C_2)| = 7$, arguing on this subgraph can now be used to derive a contradiction.}
    \label{fig:b_swap_6}
\end{figure}

Assume without loss of generality that the first swap is that moving $b$ along the edge $\{1,2\}$, so the swap sequence $\sigma$ must be of the following form, where each of the explicitly labeled swaps corresponds to those that swap the vertex $b$ along the cycle subgraph $b_\mathcal C$ (e.g. the term $(1\ 2)$ corresponds to swapping $b$ from $1$ to $2$ with $n \in V(\Star_n)$). In particular, $n \in V(\Star_n)$ begins its traversal on vertex $2$, and the terms $s_i$ for $i \in [6]$ represent subsequences of swaps within $\mathcal S$.
\begin{align*}
    \mathcal S = (1 \ 2)s_1(2\ 3)s_2 (3\ 4)s_3 (4\ 5)s_4 (5\ 6)s_5 (6\ 1)s_6 
\end{align*}
Observe that each subsequence $s_i$ for $i \in [5]$ must have length at least $4$ (i.e. $s_i$ must involve at least $4$ $(X_\mathcal C, Y_\mathcal C)$-friendly swaps). As an example, consider the trajectory that $\mathfrak n$ takes in $s_1$: from $(1\ 2)$ to $(2\ 3)$, $n \in V(\Star_n)$ must traverse a path from vertex $1$ to vertex $3$, and any trajectory with length less than $4$ would yield the existence of a cycle of length at most $5$ in $X_C$, a contradiction on $|V(\mathcal C)| = g(\FS(X, \Star_n))$. This causes the number of swaps in $\mathcal S$ up to the swap $(6\ 1)$ to be at least $5\cdot 4 + 6 = 26$, and right after the swap $(6\ 1)$, $n \in V(\Star_n)$ lies upon vertex $6$. We cannot return $\mathfrak n$ to its starting point of vertex $2$, as this requires $s_6$ to have length at least $3$, so $|V(\mathcal C)| \geq 30$, a contradiction either on $|V(\mathcal C)| = g(\FS(X, \Star_n))$ (if the inequality is strict) or $b_\mathcal C \notin \mathcal X$ (if an equality) since we can traverse $b_\mathcal C$ for a $30$-cycle by Corollary \ref{quad_bd}.

In all possible cases, we deduce a contradiction on the claim that $\mathcal S$ improves $\mathcal S_\mathcal B$.

\subparagraph{Case 2: $|V(X_\mathcal C)| = |V(\mathcal B)| + 1$ adds one vertex to $\mathcal B$.}
    
The additional vertex must have degree at least $2$ in $X_\mathcal C$. Observe that any such vertex cannot be adjacent to two vertices in $\mathcal C_1$ or two vertices in $\mathcal C_2$ by choice of $\mathcal B$ (minimality of $|V(\mathcal C_1)| + |V(\mathcal C_2)|$) and the fact that $3$-cycles and $4$-cycles cannot be subgraphs of $X_\mathcal C$ without contradicting optimality of $\sigma$. Hence, this new vertex must have degree $2$, and is adjacent to a vertex in $\mathcal C_1$ and $\mathcal C_2$. To improve $\mathcal S_\mathcal B$, which has $2|V(\mathcal B)| + 4$ swaps, we must have that $\sigma$ is a sequence of swaps such that $n \in V(\Star_n)$ visits all vertices of $X_\mathcal C$ exactly twice, since $|V(X_\mathcal C)| = 2|V(\mathcal B)| + 2$. As before, consider any vertex of degree $3$ in $X_\mathcal C$, for which we can similarly argue that there must exist some vertex $y \in V(Y_\mathcal C)$ which traverses a $6$-cycle, and thus $|V(\mathcal C)| = ||y_\mathcal C||_1 > 2|V(X_\mathcal C)|$, a contradiction on $\mathcal S$ improving $\mathcal S_\mathcal B$.

\paragraph{$\mathcal P$ has length at least $2$.}

$\mathcal S_\mathcal B$ is a sequence of $2(|V(\mathcal C_1)| + |V(\mathcal C_2)| + 2d+2)$ swaps, where $d$ is the number of inner vertices of $\mathcal P$ (and we have $d \geq 1$). Hence, we have that $|V(\mathcal C)| < 2(|V(\mathcal C_1)| + |V(\mathcal C_2)| + 2d+2)$ (this is strict, as we assumed $\mathcal B$ does not lie in $\mathcal X$), or $|V(\mathcal C)| \leq 2(|V(\mathcal C_1)| + |V(\mathcal C_2)| + 2d+1)$. 
    
Begin by assuming the existence of another path $\mathcal P' = \{v, v_1, \dots, v_k, w\}$, with $v \in V(\mathcal C_1)$, $w \in V(\mathcal C_2)$, and $v_i \notin V(\mathcal C_1) \cup V(\mathcal C_2)$ for all $i \in [k]$. Either $k=d$ (by assumption on $\mathcal P$ as the shortest connecting path) or $k=d+1$ (to respect the upper bound on $|V(\mathcal C)|$ given above, as every vertex must be visited at least twice): it follows that there can be at most one other path $\mathcal P'$. Additionally, $\mathcal P'$ must be vertex-disjoint from $\mathcal P$: if not, we could have constructed a barbell subgraph of $X_\mathcal C$ with a strictly shorter connecting path. We break into cases based on whether $k=d$ or $k=d+1$.
    
\subparagraph{$\mathbf{k=d}$.}

Recall that we have $|V(\mathcal C_1)| \geq 6$ and $|V(\mathcal C_2)| \geq 6$. In the case that $d = 1$, the two additional vertices yield $|V(X_\mathcal C)| \geq 14$, so that $|V(C)| \geq 2|V(X_\mathcal C)| \geq 28$. To achieve $|V(\mathcal C)| = 28$, we must have that $|V(X_\mathcal C)| = 14$, and every vertex of $X_\mathcal C$ must be visited precisely twice, in which case we can consider a vertex with degree at least $3$ to obtain $y \in V(Y_\mathcal C)$ such that $y_\mathcal C$ is a cycle of size at least $6$, so that $|V(\mathcal C)| = ||y_\sigma||_1 > 28$. Thus, $|V(\mathcal C)| \geq 30$, so there cannot exist $6$-cycles in $X_\mathcal C$, as we can achieve a cycle in $\FS(X, \Star_n)$ with at most as many swaps as $\mathcal S$ by strictly traversing any $6$-cycle. 

Henceforth, we can assume $|V(\mathcal C_1)| \geq 7, |V(\mathcal C_2)| \geq 7$. We have  $|V(\mathcal C)| \geq 2(|V(\mathcal C_1)| + |V(\mathcal C_2)| + 2d)$, so for $\mathcal S$ to improve $\mathcal S_\mathcal B$, there exist at most two vertices that are visited more than twice. In particular, some vertex with degree at least $3$ must be visited exactly twice. Thus, by arguing as before, we have the existence of some $y \in V(Y_\mathcal C)$ such that $y_\mathcal C$ is a cycle of size at least $7$, so that $|V(\mathcal C)| = ||y_\mathcal C||_1 > 2(|V(\mathcal C_1)| + |V(\mathcal C_2)| + 2d + 1)$, a contradiction.

\subparagraph{$\mathbf{k=d+1}$.}
    
The only way that $\mathcal S$ can improve $\mathcal S_\mathcal B$ here is if $\mathcal S$ visits each vertex of $X_\mathcal C$ exactly twice, as $X_\mathcal C$ has exactly $|V(\mathcal C_1)| + |V(\mathcal C_2)| + 2d + 1$ vertices. As before, considering any vertex of degree $3$ in $X_\mathcal C$ yields the existence of some $y \in Y_\mathcal C$ such that $y_\mathcal C$ is a cycle of size at least $7$, so that $|V(\mathcal C)| = ||y_\mathcal C||_1 \geq 2(|V(\mathcal C_1)| + |V(\mathcal C_2)| + 2d + 1)$, a contradiction.

\medskip
    
Now assume that no such path $\mathcal P'$ exists in $X_\mathcal C$. Here, there cannot exist a path from an inner vertex of $\mathcal P$ to vertices of $\mathcal C_1$, $\mathcal C_2$, or $\mathcal P$ itself with inner vertices disjoint from $V(\mathcal C_1)$, $V(\mathcal C_2)$ and $V(\mathcal P)$, as any such path would contradict the minimality of the length of $\mathcal P$ in the barbell subgraph $\mathcal B$. Denote the path $\mathcal P = \{v, v_1, \dots, v_d, w\}$ with $v \in V(\mathcal C_1)$ and $w \in V(\mathcal C_2)$, and construct the subgraph $\mathcal C_1^*$ of $X_C$ as follows. Setting $\mathcal C_1^{(0)} = \mathcal C_1$, $\mathcal C_1^{(1)}$ is achieved by appending to $\mathcal C_1^{(0)}$ all edges and vertices incident to vertices in $V(\mathcal C_1^{(0)})$, excluding vertex $v_1$ and edge $\{v, v_1\}$. Generally, construct $\mathcal C_1^{(i)}$ by appending to $\mathcal C_1^{(i-1)}$ all edges and vertices incident to vertices in $V(\mathcal C_1^{(i-1)}) \setminus V(\mathcal C_1^{(i-2)})$. Continue until the process terminates to yield $C_1^*$. Similarly construct the subgraphs $\mathcal C_2^*$ (excluding vertex $v_d$ and edge $\{v_d, w\}$ for $C_2^{(1)}$) and $\mathcal P^*$ (letting $\mathcal P^{(0)}$ be the subpath consisting of the inner vertices $\{v_1, \dots, v_d\}$ of the path $\mathcal P$, and excluding $v$ and $w$ in $\mathcal P^{(1)}$). It is not hard to show that for any $v \in V(\mathcal C_1^*)$, the smallest $i$ such that $v \in V(\mathcal C_1^{(i)})$ denotes the length of a shortest path from a vertex in $V(\mathcal C_1)$ to $v$, and that in such a shortest path the $j$th inner vertex first appears in the set $V(\mathcal C_1^{(j)})$. An analogous statement holds for vertices in $V(\mathcal C_2^*)$ and $V(\mathcal P^*)$.

We can now decompose $X_\mathcal C$ by $[\mathcal C_1^*, \mathcal C_2^*, \mathcal P^*]$, which must have that $V(X_\mathcal C) = V(\mathcal C_1^*) \sqcup V(\mathcal C_2^*) \sqcup V(\mathcal P^*)$ and $E(X_C) = E(\mathcal C_1^*) \sqcup E(\mathcal C_2^*) \sqcup E(\mathcal P^*) \sqcup \{\{v, v_1\}, \{v_d, w\}\}$. A violation of one of these statements would contradict either our choice of $\mathcal P$ as the shortest connecting path for a barbell in $X_\mathcal C$ or the nonexistence of a second path $\mathcal P'$ between a vertex of $\mathcal C_1$ and a vertex of $\mathcal C_2$. It is not hard to see that the sets $V(\mathcal C_1^*), V(\mathcal C_2^*), V(\mathcal P^*)$ include all vertices of $V(X_\mathcal C)$, as $X_\mathcal C$ is connected. Say we take $\mathcal C_1^*$, which certainly has every set $V(\mathcal C_1^{(i)})$ (and thus the entirety of $V(\mathcal C_1^*)$) disjoint from $V(\mathcal C_2)$ and the inner vertices of $\mathcal P$. Now inductively consider $\mathcal P^*$: for $i \geq 1$, assuming $V(\mathcal P^{(i-1)}) \cap V(\mathcal C_1^*) = \emptyset$, if there existed $v \in V(\mathcal P^{(i)}) \cap V(\mathcal C_1^*)$, take smallest $j$ such that there exists $v \in V(\mathcal P^{(i)}) \cap V(\mathcal C_1^{(j)})$, yielding a path between $V(\mathcal C_1)$ and an inner vertex of $\mathcal P$ by ``tracing back" the constructions with inner vertices disjoint from $V(\mathcal B)$, a contradiction. We similarly conclude that $V(\mathcal C_2^*) \cap V(\mathcal C_1^*) = V(\mathcal C_2^*) \cap V(\mathcal P^*) = \emptyset$ by arguing inductively on the construction of $\mathcal C_2^*$. For the claim on the decomposition of $E(X_\mathcal C)$, the disjointedness of the proposed edge sets follows from the analogous statement on the vertices $V(X_\mathcal C)$. To show the union, consider arbitrary $e = \{a, b\} \in E(X_\mathcal C)$ not either $\{v, v_1\}$ or $\{v_d, w\}$, and consider vertex $a$: if $a \in V(\mathcal C_1^*)$, then we must have $b \in V(\mathcal C_1^*)$ and $e \in E(\mathcal C_1^*)$ by construction of $\mathcal C_1^*$. A similar argument holds if we initially assume $a \in V(\mathcal C_2^*)$ or $a \in V(\mathcal P^*)$, so every $e \in E(X_\mathcal C)$ lies in $E(\mathcal C_1^*), E(\mathcal C_2^*), E(\mathcal P^*)$, or $\{\{v, v_1\}, \{v_d, w\}\}$.

It also follows from this decomposition that any path from a vertex in $V(\mathcal C_1^*)$ to a vertex in $V(\mathcal C_2^*)$ necessarily involves all vertices of $\mathcal P$ at some point, in order respecting that of $\mathcal P$. Specifically, let $\Tilde{\mathcal P} = \{\nu_0, \nu_1, \nu_2, \dots, \nu_k\}$ with $\nu_0 \in V(\mathcal C_1^*)$ and $\nu_k \in V(\mathcal C_2^*)$ be such a path. Certainly, we have that $v \in \Tilde{\mathcal P}$ and is the earliest vertex of $\mathcal P$ in $\Tilde{\mathcal P}$, as any edge traversed from $\nu_0 \in V(\mathcal C_1^*)$ must either be in $E(\mathcal C_1^*)$ or be equal to $\{v, v_1\}$. Now assume the vertices $\{v, v_1, \dots, v_i\}$ have appeared in $\mathcal P$, following this order. Starting from an occurrence of the vertex $v_i$, $\Tilde{\mathcal P}$ pursues a sequence of edges that either immediately goes to $v_{i-1}$ or $v_{i+1}$ (i.e. the next edge is $\{v_{i-1}, v_i\}$ or $\{v_i, v_{i+1}\}$) or eventually returns to $v_i$ after traversing vertices in $V(\mathcal P^*) \setminus V(\mathcal P^{(0)})$ (returning to a different vertex in $\mathcal P^{(0)}$ first would give the existence of a barbell subgraph with shorter connecting path, a contradiction on the choice of $\mathcal B$). In particular, the next new vertex of $\mathcal P$ that appears in $\Tilde{\mathcal P}$ exists, and must necessarily be $v_{i+1}$.

\begin{figure}[ht]
    \centering
    \includegraphics[width=0.6\textwidth]{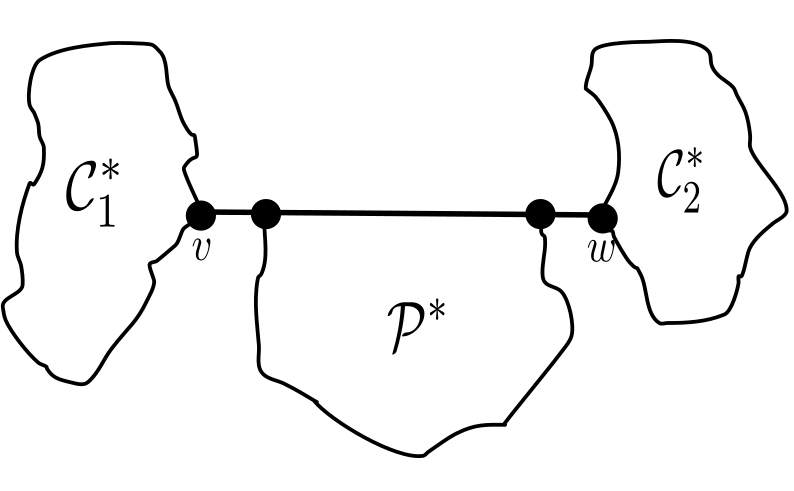}
    \caption{Schematic diagram illustrating this decomposition. Since $\mathcal P$ is the shortest path in any barbell subgraph of $X_\mathcal C$ and there does not exist another path $\mathcal P'$ connecting vertices in $V(\mathcal C_1)$ and $V(\mathcal C_2)$, $X_\mathcal C$ must possess this structure. Indeed, if the vertex sets of $\mathcal C_1^*$, $\mathcal C_2^*$, and $\mathcal P^*$ were not pairwise disjoint, we would raise a contradiction on either the minimality of the length of $\mathcal P$ in a barbell subgraph of $X_\mathcal C$ or the nonexistence of a second such path $\mathcal P'$. From here, it should be clear that every vertex in $\mathcal P$ must be visited at least four times in any optimal swap sequence achieving a cycle in $\FS(X_\mathcal C, \Star_n)$.}
    \label{fig:4.15_decomp}
\end{figure}

Here, we must necessarily visit each vertex in $\mathcal P$ at least four times via the swap sequence $\mathcal S$, which will show that $\mathcal S_\mathcal B$ cannot be improved. Indeed, assume the contrary: certainly the path $\mathcal P$ is crossed an even number of times (to return to a starting vertex in $\mathcal C_1^*$, where we shall assume without loss of generality $n \in V(\Star_n)$ begins its traversal), so $\mathcal P$ is crossed exactly twice. But then $\mathcal S$ must have completed some cycle in $\FS(X, \Star_n)$ strictly on the subgraph $V(\mathcal C_2^*)$ as we do not return there, a contradiction on $\mathcal S$ as a sequence of swaps achieving the girth of $\FS(X, \Star_n)$, as there exists a strictly improved sequence of swaps that is a proper contiguous subset of the sequence of swaps given by $\mathcal S$ that yields a cycle in $\FS(X, \Star_n)$, a contradiction on the optimality of $\mathcal S$.
\end{proof}

\noindent From the discussion following Problem \ref{girth_problem} concerning the graphs in $\mathcal G$ and Theorem \ref{barbell_proof}, we can deduce that any graph in $\mathcal G$ does not have a proper barbell subgraph, which significantly improves our understanding of the possible graphs that lie in $\mathcal G$, and thus the possible trajectories $\mathfrak n$ takes around a graph $X$ to achieve the girth of $\FS(X, \Star_n)$.

\begin{proposition} \label{remaining_cases}
Let $\mathcal G'$ include the set of graphs $\tilde{\mathcal G}$ from Theorem \ref{girth_subset}, as well as all instances of $\theta_4$, $\theta_5$, $\tilde{\theta_6}$, $K_4^*$, and $K_{3,3}^*$-graphs, as depicted in Figure \ref{fig:rem_cases}. Then $\mathcal G \subset \mathcal G'$.
\end{proposition}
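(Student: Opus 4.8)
The plan is to combine the structural information already extracted about graphs in $\mathcal G$ with a purely graph-theoretic classification of barbell-free $2$-connected graphs, and then to dispatch the finitely many resulting topological types. Let $X \in \mathcal G$. If $X$ is a cycle then $X \in \tilde{\mathcal G} \subseteq \mathcal G'$, so assume not; by Proposition \ref{one_cycle_subgph} $X$ then has at least two distinct cycle subgraphs, so by Lemma \ref{two_cycle_subgphs} it contains a barbell subgraph or a $\theta$-subgraph. By Theorem \ref{barbell_proof} and the remark following it, no graph in $\mathcal G$ possesses a \emph{proper} barbell subgraph. Hence either $X$ is itself a barbell graph, in which case Theorem \ref{girth_subset} gives $X \in \tilde{\mathcal G} \subseteq \mathcal G'$ and we are done, or $X$ contains no barbell subgraph at all and (having at least two cycles) contains a $\theta$-subgraph. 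From now on assume the latter; recall also that $\delta(X) \ge 2$ and $X$ is connected with finite girth.

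First I would show such an $X$ is $2$-connected: if $v$ were a cut vertex, then for each component $C$ of $X - v$ the subgraph induced on $C \cup \{v\}$ is connected with every vertex other than possibly $v$ of degree $\ge 2$ (as $\delta(X)\ge 2$ and $C$ is a full component), hence contains a cycle lying inside $C\cup\{v\}$; picking one such cycle from each of two components of $X - v$ yields two cycles meeting in at most the single vertex $v$, a barbell subgraph, a contradiction. The heart of the argument is then the structural claim: \emph{a $2$-connected graph with no barbell subgraph is a subdivision of $\theta_k$ for some $k \ge 2$ (where $\theta_2$ denotes a cycle), or a subdivision of $K_4$, or a subdivision of $K_{3,3}$.} Equivalently, suppressing all degree-$2$ vertices yields a loopless ($2$-connectedness forbids loops) multigraph $H$ with at most two vertices — the $\theta_k$ case — or else $H \cong K_4$ or $H \cong K_{3,3}$. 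I would prove this by induction on the number of ears $r = |E(X)| - |V(X)|$ in an open ear decomposition $[P_0, \dots, P_r]$ (Propositions \ref{bicon}, \ref{ear_eq}), maintaining the invariant that no partial union $P_0 \cup \dots \cup P_i$ contains a barbell: the cases $r=0$ (a cycle) and $r=1$ (a $\theta_3$-graph) are immediate, and in the inductive step one checks that an ear both of whose endpoints lie in the interior of a single subdivided edge of $H$ always produces a barbell, forcing every new ear either to run between two existing branch vertices (keeping $H$ a $\theta_k$ or promoting it within the $K_4$/$K_{3,3}$ patterns) or to be otherwise incompatible; graphs such as wheels $W_m$ with $m \ge 4$, the prism $C_3 \square K_2$, $K_5$, $K_5 - e$, $K_4$ with a doubled edge, and $K_{3,m}$ with $m \ge 4$ are all excluded by exhibiting an explicit pair of cycles meeting in at most one vertex. (This classification is closely related to — and can alternatively be deduced from — Lovász's characterization of graphs with no two vertex-disjoint cycles together with a short extra check.)

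Finally I would dispatch the cases produced by the structural claim. A cycle is $\theta_2$ and a $\theta_3$-graph is a $\theta$-graph, both already placed in $\tilde{\mathcal G}$ by Theorem \ref{girth_subset}; subdivisions of $K_4$ and $K_{3,3}$, together with $\theta_4$- and $\theta_5$-graphs, belong to $\mathcal G'$ by definition. It remains to show that a $\theta_k$-graph with $k \ge 6$ lies in $\mathcal G$ only if it is a $\tilde{\theta_6}$-graph. For this I would delete the interior of a longest of the $k$ internally disjoint paths to obtain a proper $\theta_{k-1}$-subgraph $X'$ on $m < n$ vertices, and argue that $g(\FS(X, \Star_n)) = g(\FS(X', \Star_m))$: the inequality $\le$ is Proposition \ref{subgraph_lemma}, and the reverse follows from the fact — extending the word/trajectory analysis of Proposition \ref{theta_bd}, using that any girth-achieving trajectory of $\mathfrak n$ which enters a given path must traverse it at least twice — that an optimal trajectory never needs more than the five shortest paths when every path has length $\ge 2$, and never more than the six shortest when one path is an edge. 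This forces $X \notin \mathcal G$ for every $\theta_k$-graph with $k \ge 6$ (and every $\tilde{\theta}$-graph with $k \ge 7$) other than a $\tilde{\theta_6}$-graph, so each $X \in \mathcal G$ falls into one of the families listed in $\mathcal G'$.

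The two places I expect genuine work are the structural classification (the ear-decomposition case analysis must be made airtight, and one must be certain the list $\{\theta_k, K_4, K_{3,3}\}$ is exhaustive among barbell-free $2$-connected graphs with at least three branch vertices) and the final reduction for large $k$ (pinning down exactly how many of the available paths an optimal trajectory in a multi-path theta graph can require — the quantitative input that separates $\theta_5$ and $\tilde{\theta_6}$, which we leave in $\mathcal G'$, from all larger theta graphs, which we remove). The former is the main obstacle, since it is where the otherwise amorphous class of graphs in $\mathcal G$ gets pinned to a short explicit list.
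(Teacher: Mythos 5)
Your proposal is correct and lands on exactly the same final list of topological types as the paper, but it organizes the decisive structural step differently. The paper never states a standalone classification: it fixes a proper $\theta$-subgraph $G_\theta$ of $X \in \mathcal G$, splits on whether a branch vertex of $G_\theta$ has degree greater than $3$, and directly tracks how additional paths can attach without creating a proper barbell (its Case 1 yields the generalized theta graphs $\theta_4$, $\theta_5$, $\tilde{\theta_6}$ via the same ``a used path with an inner vertex is traversed at least twice, hence at most five paths, or six when one path is an edge'' count that you use; its Case 2 yields $K_4^*$ and $K_{3,3}^*$ by analyzing which pairs of interior vertices the attachment paths may join and showing that three such paths, or two that fail to cross, always force a proper barbell). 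You instead first make the $2$-connectivity reduction explicit (the paper leaves it implicit in its ``follow edges until returning to $G_\theta$'' step) and then prove a clean classification of barbell-free $2$-connected graphs by induction on ears; this buys modularity and makes exhaustiveness easier to certify, at the cost of concentrating the hardest combinatorics in that one classification claim. Your sketch of it is sound: an ear with an endpoint interior to a subdivided edge of the reduced multigraph, unless it completes the $K_4$ or $K_{3,3}$ pattern, closes a cycle meeting some other cycle in at most one vertex, and the excluded examples you list ($K_5$, wheels, the prism, doubled edges, $K_{3,m}$ for $m \ge 4$) are exactly the ones that must be ruled out. Your final reduction for $\theta_k$ with $k \ge 6$ --- delete the longest path, get one inequality from Proposition \ref{subgraph_lemma} and the other from the traversal count --- is the paper's Case 1 argument recast as a subgraph-girth equality, and is the correct way to invoke the defining minimality property of $\mathcal G$.
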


\begin{figure}[ht]
    \centering
    \begin{minipage}{.3\linewidth}
    \centering
    \subfloat[$\theta_4$-graphs.]{\label{fig:theta4}\includegraphics[width=\textwidth]{figures/rem_cases_a.png}}
    \end{minipage}%
    \hfill
    \begin{minipage}{.3\linewidth}
    \centering
    \subfloat[$\theta_5$-graphs.]{\label{fig:theta5}\includegraphics[width=\textwidth]{figures/rem_cases_b.png}}
    \end{minipage}%
    \hfill
    \begin{minipage}{.3\linewidth}
    \centering
    \subfloat[$\tilde{\theta_6}$ graphs.]{\label{fig:tilde_theta6}\includegraphics[width=\textwidth]{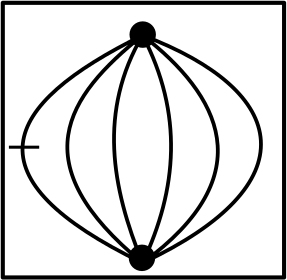}}
    \end{minipage} \\
    \begin{minipage}{.48\linewidth}
    \centering
    \subfloat[$K_4^*$-graphs.]{\label{fig:K4}\includegraphics[width=0.8\textwidth]{figures/rem_cases_d.png}}
    \end{minipage}%
    \hfill
    \begin{minipage}{.48\linewidth}
    \centering
    \subfloat[$K_{3,3}^*$-graphs.]{\label{fig:K33}\includegraphics[width=0.8\textwidth]{figures/rem_cases_e.png}}
    \end{minipage}%

    \caption{The remaining possibilities for types of graphs in the set $\mathcal G$. Here, the lines between vertices in the graphs correspond to paths, possibly with several inner vertices. The hatch mark on one of the paths in (c) indicates that one of the six paths between the vertices of degree $6$ necessarily must be an edge, or it would immediately be seen to not be in $\mathcal G$. If we interpret the paths of the two graphs in the bottom row as edges, observe that the resulting graphs would be isomorphic to $K_4$ and $K_{3,3}$.}
    \label{fig:rem_cases}
\end{figure}

\begin{proof}
As discussed above, no graph $G \in \mathcal G$ has a proper barbell subgraph. Recall that if $G$ is not a cycle graph, it must contain at least two cycle subgraphs: as such, any such $G$ that is not a cycle, barbell, or a $\theta$-graph must contain a proper $\theta$-subgraph by Lemma \ref{two_cycle_subgphs}. Take any such graph $G$ with a proper $\theta$-subgraph $G_\theta$, and let its two vertices of degree $3$ be denoted $v$ and $w$. 

\paragraph{Case 1: Either $v$ or $w$ has degree greater than $3$.}

Without loss of generality, say $v$ has degree greater than $3$, and consider an edge incident to $v$ that does not lie in $G_\theta$. Continue selecting edges along a path until we return to a vertex in $G_\theta$: this must be $w$ to avoid a proper barbell subgraph of $G$; call the resulting graph $G_\theta'$. From here, it is straightforward to observe that all vertices in $V(G_\theta) \setminus \{v, w\}$ have degree $2$ in $G$, and the only way we can append to $G_\theta'$ is to include additional paths between $v$ and $w$.

First assume all paths from $v$ to $w$ in $G$ are not edges: any optimal traversal of $G$ (i.e. that yielding $g(\FS(G, \Star_n))$ that uses a path traverses it at least twice, so $G$ has at most five paths (indeed, if $G$ had six or more paths, invoke the trajectory given by Proposition \ref{theta_bd} on the three shortest paths). If a path from $v$ to $w$ has (exactly) one edge, then by analogous reasoning, $G$ can have at most six paths.

\paragraph{Case 2: Both $v$ and $w$ have degree $3$.}

Let (without loss of generality) $x_1 \in V(\mathcal C_1) \setminus \{v, w\}$: selecting edges along a path until we return to a vertex in $G_\theta$ must yield $x_1' \in V(\mathcal C_2) \setminus \{v, w\}$. If there exists a second path from $x_2 \in V(\mathcal C_1) \setminus \{v, w\}$ to $x_2' \in V(\mathcal C_2) \setminus \{v, w\}$, we must have $d_{\mathcal C_1}(x_1, v) < d_{\mathcal C_1}(x_1', v)$ and $d_{\mathcal C_2}(x_2, w) < d_{\mathcal C_2}(x_2', w)$ (again without loss of generality): if $d_{\mathcal C_1}(x_1, v) < d_{\mathcal C_1}(x_1', v)$ and $d_{\mathcal C_2}(x_2, w) > d_{\mathcal C_2}(x_2', w)$, there would exist a proper barbell subgraph, as depicted in Figure \ref{fig:theta_cross_paths}(a). However, three such paths necessarily yield a proper barbell subgraph: this follows from the preceding discussion if we do not have $d_{\mathcal C_1}(x_1, v) < d_{\mathcal C_1}(x_2, v) < d_{\mathcal C_1}(x_3, v)$, $d_{\mathcal C_2}(x_1', w) < d_{\mathcal C_2}(x_2', w)$, $d_{\mathcal C_2}(x_3', w))$, while this case still yields a proper barbell subgraph as depicted in Figure \ref{fig:theta_cross_paths}(b).
\end{proof}

\begin{figure}[ht]
    \centering
    \subfloat[Two paths above $G_\theta$ that fail to cross.]{\includegraphics[width=0.4\textwidth]{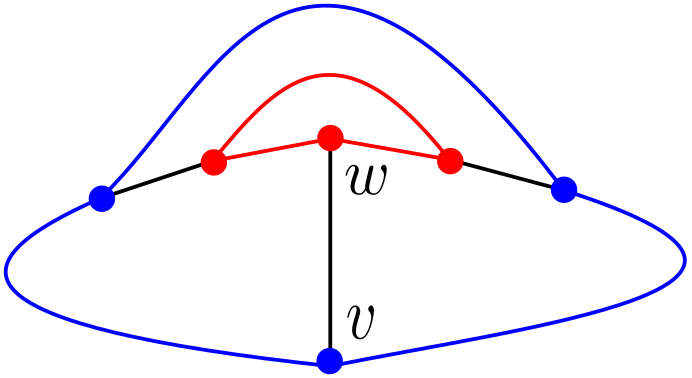}\label{fig:demo_X3}}
    \hfill
    \subfloat[Three paths above $G_\theta$ with any two crossing.]{\includegraphics[width=0.4\textwidth]{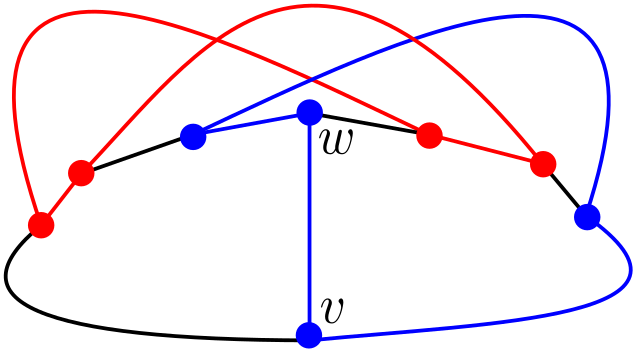}\label{fig:demo_Y3}}
    \caption{Illustrations corresponding to Case 2 in the proof of Proposition \ref{remaining_cases}, showing the existence of a proper barbell subgraph. The two cycle subgraphs of the proper barbell subgraph are colored in red and blue. The vertices of degree $3$ in the proper $\theta$-subgraph $G_\theta$ are denoted by $v$ and $w$.}
    \label{fig:theta_cross_paths}
\end{figure}

\begin{proposition} \label{fourth_traj}
Let $X$ be a $\theta_4$-graph with paths $\mathcal P_1, \mathcal P_2, \mathcal P_3, \mathcal P_4$ between the vertices $v, w \in V(X)$ with degree $4$, with corresponding path lengths $p_1 = 1 < p_2 \leq p_3 \leq p_4$, respectively. If $4+4p_2+2(p_3+p_4) < \min\{p_2(p_2+1), 4(p_2+p_3+p_4), 6(1+p_2+p_3)\}$, then $X \in \mathcal G$.
\end{proposition}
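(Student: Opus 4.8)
The plan is to work from the second (``minimal'') description of $\mathcal G$ given after Problem \ref{girth_problem}: a graph $X$ with $g(\FS(X,\Star_n)) < \infty$ lies in $\mathcal G$ exactly when every proper subgraph $\tilde X$ has $g(\FS(\tilde X,\Star_m)) > g(\FS(X,\Star_n))$, the reverse inequality $\geq$ being automatic since $\FS(\tilde X,\cdot)$ embeds in $\FS(X,\cdot)$ (Proposition \ref{subgraph_lemma}). Writing $L := 4 + 4p_2 + 2(p_3+p_4)$, it therefore suffices to establish two things: (a) $g(\FS(X,\Star_n)) \leq L$, and (b) every proper subgraph of $X$ has $\FS$-girth strictly larger than $L$. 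Note that (a) and (b) together force $g(\FS(\tilde X,\Star_m)) > L \geq g(\FS(X,\Star_n))$ for all proper $\tilde X$, hence $X\in\mathcal G$ --- so, unlike Propositions \ref{barbell_bd} and \ref{theta_bd}, I never need to compute $g(\FS(X,\Star_n))$ exactly; I only need the upper bound.

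For (a) I would exhibit the ``fourth trajectory'' as a concatenation of two blocks of the $\tilde\theta$-type traversal from the proof of Proposition \ref{theta_bd}(2): the first block on the sub-$\tilde\theta$-graph $\mathcal P_1 \cup \mathcal P_2 \cup \mathcal P_3$ and the second on $\mathcal P_1 \cup \mathcal P_2 \cup \mathcal P_4$, each block being $\mathcal S_1 \to \mathcal S_j^{-1} \to \mathcal S_k \to \mathcal S_1^{-1} \to \mathcal S_j \to \mathcal S_k^{-1}$ with $\mathcal S_i$ a traversal of $\mathcal P_i$ from $v$ to $w$ and $\mathfrak n$ starting at $v$. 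Each block traverses its three paths twice each, so over the whole sequence $\mathcal P_1$ is traversed four times, $\mathcal P_2$ four times, and $\mathcal P_3,\mathcal P_4$ twice each, giving $2(p_1+p_2+p_3) + 2(p_1+p_2+p_4) = 4 + 4p_2 + 2(p_3+p_4) = L$ swaps. One then checks, exactly as in Proposition \ref{theta_bd}, that the first block returns every interior vertex of $\mathcal P_3$ to its original position (two opposite-parity traversals of $\mathcal P_3$, with the positions $v,w$ restored in between by $\mathcal S_1,\mathcal S_2$) and leaves the $v,w$ positions and $\mathfrak n$ in a state from which the second block closes everything up into a single cycle of $\FS(X,\Star_n)$.

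For (b), let $\tilde X$ be a proper subgraph of $X$ with $g(\FS(\tilde X,\Star_m)) < \infty$; passing to the $\mathcal C$-induced subgraph of a girth cycle (as in Proposition \ref{inf_girth} and the discussion after Definition \ref{path_induced}) I may assume $\tilde X$ is connected with $\delta(\tilde X)\geq 2$, and since every vertex of $X$ outside $\{v,w\}$ has degree $2$, such a $\tilde X$ is a union of at most three of the four paths. If $\tilde X$ uses at most two paths it is a cycle of length $s \geq p_1+p_2 = 1+p_2$, so $g(\FS(\tilde X,\Star_m)) = s(s-1) \geq p_2(p_2+1) > L$ by Lemma \ref{star_cycle}. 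If $\tilde X$ uses three paths it is a $\theta$-graph, and the smallest such are $\mathcal P_1\cup\mathcal P_2\cup\mathcal P_3$ (a $\tilde\theta$-graph) and $\mathcal P_2\cup\mathcal P_3\cup\mathcal P_4$ (a $\theta$-graph with no edge-path); any other three-path choice has all its cycle sizes at least as large. From $L < p_2(p_2+1)$ one gets $2(p_3+p_4) < p_2^2 - 3p_2 - 4$, hence $4(p_2+p_3+p_4) < 2p_2^2 - 2p_2 - 8 < (p_2+p_3)(p_2+p_3-1)$ (using $p_3,p_4\geq p_2$), so the hypothesis of Proposition \ref{theta_bd}(1) holds for $\mathcal P_2\cup\mathcal P_3\cup\mathcal P_4$ and its $\FS$-girth equals $4(p_2+p_3+p_4) > L$. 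For $\mathcal P_1\cup\mathcal P_2\cup\mathcal P_3$, its $\FS$-girth is the smaller of $p_2(p_2+1)$ (traversing the shortest cycle $\mathcal P_1\cup\mathcal P_2$) and $6(1+p_2+p_3)$ (the $\tilde\theta$-trajectory of Proposition \ref{theta_bd}(2)), and the hypothesis bounds $L$ below both. Combining (a) and (b) yields $X\in\mathcal G$.

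I expect the main obstacles to be, first, the book-keeping in (a): verifying that the two-block concatenation genuinely closes into a single cycle of $\FS(X,\Star_n)$ of length exactly $L$ (rather than into a shorter cycle or a non-simple closed walk), which requires tracking the positions of $v$, $w$, and the interior vertices of $\mathcal P_3$ and $\mathcal P_4$ through both blocks in the style of the computations in Propositions \ref{barbell_bd} and \ref{theta_bd}; and second, the care needed in (b) to justify that each relevant $\theta$-subgraph has $\FS$-girth exceeding $L$ --- for the no-edge $\theta$-subgraph the hypothesis of Proposition \ref{theta_bd}(1) transfers cleanly as above, but for the $\tilde\theta$-subgraphs the hypothesis of Proposition \ref{theta_bd}(2) need not hold, so one falls back on the fact that whichever of the two competing bounds $p_2(p_2+1)$ and $6(1+p_2+p_3)$ is the true girth, both lie above $L$.
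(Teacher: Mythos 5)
Your overall logical frame is fine (an upper bound $g(\FS(X,\Star_n))\leq L$ plus the bound $g(\FS(\tilde X,\Star_m))>L$ for all proper subgraphs does give $X\in\mathcal G$, and your part (b) is essentially the comparison the paper makes against Corollary \ref{quad_bd} and Proposition \ref{theta_bd}), but your construction in part (a) does not produce a cycle, and this is the heart of the proposition. Concretely, take the first block $\mathcal S_1\to\mathcal S_2^{-1}\to\mathcal S_3\to\mathcal S_1^{-1}\to\mathcal S_2\to\mathcal S_3^{-1}$ and track the element $A=\sigma_0(w)$ and the element $C_2=\sigma_0(x_2)$ on the first interior vertex of $\mathcal P_3$. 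After $\mathcal S_3$, the element $C_2$ has been pushed out onto $v$; the intermediate traversals $\mathcal S_1^{-1}$ and $\mathcal S_2$ then carry $C_2$ to $w$ and push it into $\mathcal P_2$ (onto $w_{p_2}$), while bringing $A$ onto $v$. So when $\mathcal S_3^{-1}$ runs, it is $A$, not $C_2$, that gets pushed back into $x_2$. After the first block, $x_2$ holds $A$ and $C_2$ sits on $w_{p_2}$; your second block only touches $v$, $w$, and the interiors of $\mathcal P_1,\mathcal P_2,\mathcal P_4$, so it can never return $A$ to $w$ or $C_2$ to $x_2$. The word $123123124124$ is therefore not closed. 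This is consistent with Proposition \ref{theta_bd}(2): on a $\tilde\theta$-graph the block $\mathcal S$ must be performed \emph{three} times to close up, so a single block cannot ``restore every interior vertex of $\mathcal P_3$'' as you claim; if it could, the $\tilde\theta$-girth would be $2(1+p_2+p_3)$ rather than $6(1+p_2+p_3)$.

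The paper's trajectory is genuinely different: it uses the interleaved circular word $312412132142$ (four traversals each of $\mathcal P_1,\mathcal P_2$, two each of $\mathcal P_3,\mathcal P_4$), in which the two uses of $\mathcal P_3$, and likewise of $\mathcal P_4$, are spaced six traversals apart so that the element pushed out of each long path is carried around $\mathcal C_1=\mathcal P_1\cup\mathcal P_2$ and returned to $v$ (resp.\ $w$) exactly in time to be pushed back in. Repairing your argument requires replacing the two-block concatenation with such an interleaving and verifying closure by tracking the elements initially on $v$, $w$, and the first and last interior vertices of each path; the length count $4+4p_2+2(p_3+p_4)$ is unchanged. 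Your part (b) can then stand essentially as written, though for the $\tilde\theta$-subgraphs you should phrase the conclusion as a lower bound (any girth cycle either lives on a cycle subgraph, costing at least $p_2(p_2+1)>L$, or uses all three paths, costing at least $6(1+p_2+p_3)>L$) rather than asserting that the true girth equals one of the two candidate values.
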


\noindent Such graphs $X$ certainly exist. For example, let $(p_1, p_2, p_3, p_4) = (1, p, p, p)$ for $p \geq 8$: we have that $4+4p_2+2(p_3+p_4) < \min\{p_2(p_2+1), 4(p_2+p_3+p_4), 6(1+p_2+p_3)\}$.

\begin{proof}
Let the vertices of degree $3$ be denoted $v, w$, and label the four paths from $v$ to $w$ in $X$ by $\mathcal P_1 = \{v_1 = v, \dots, v_{m_1} = w\}$, $\mathcal P_2 = \{w_1 = v, \dots, w_{m_2} = w\}$, $\mathcal P_3 = \{x_1 = v, \dots, x_{m_3} = w\}$, $\mathcal P_4 = \{y_1 = v, \dots, y_{m_4} = w\}$ in order following the path. Consider the following sequences of swaps.
\begin{align*}
    & \mathcal S_1 = (v \ v_2)(v_2 \ v_3)\dots(v_{m_1-1} \ w) \\
    & \mathcal S_2 = (v \ w_2)(w_2 \ w_3) \dots (w_{m_2-1} \ w) \\
    & \mathcal S_3 = (v \ x_2)(x_2 \ x_3) \dots (x_{m_3-1} \ w) \\
    & \mathcal S_4 = (v \ y_2)(y_2 \ y_3) \dots (y_{m_4-1} \ w)
\end{align*}
Starting $\mathfrak n$ at $v$, consider the following sequence $\mathcal S$ of $(X, \Star_n)$-friendly swaps. As in the proof of Proposition \ref{theta_bd}, $\mathcal S$ can conveniently be represented by a (circular) word $\sigma_w$ on a $4$-letter alphabet $\{1, 2, 3, 4\}$, where $i \in [4]$ corresponds to applying $\mathcal S_i$. We elect to use this representation of $\mathcal S$, which we denote $\mathcal S_\mathcal W$.
\begin{align*}
    \mathcal S_{\mathcal W}: 312412132142
\end{align*}
It is straightforward to observe that performing $\mathcal S_{\mathcal W}$ yields a cycle in $\FS(X, \Star_n)$ with size $4+4p_2+2(p_3+p_4)$. (See Figure \ref{fig:fourth_traj} for an illustration of this trajectory.) The statement follows from comparison with the optimal trajectories given by Corollary \ref{quad_bd} and Proposition \ref{theta_bd}.
\end{proof}

\begin{figure}[ht]
    \centering
    \includegraphics[width=0.4\textwidth]{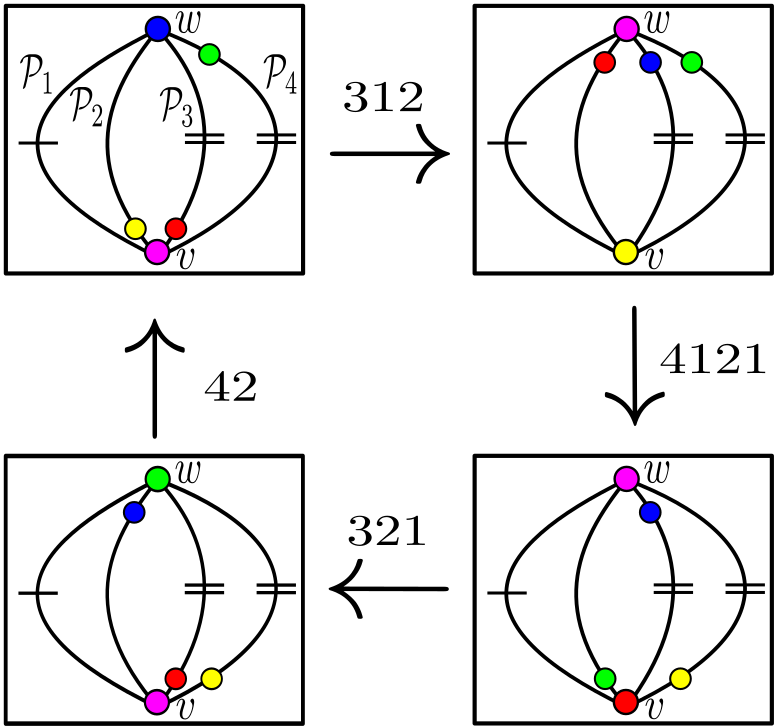}
    \caption{Illustration of the trajectory $\mathcal S_\mathcal W$ given above; the paths $\mathcal P_1$ to $\mathcal P_4$ are from left to right. We color in the preimages of some key vertices in $V(\Star_n)$ (note that these are, or are adjacent to, the vertices $v$ and $w$ in the first configuration), and in particular note that the preimage of $\mathfrak n$ is colored pink. The hatch mark above $\mathcal P_1$ indicates that it is an edge, while the two hatch marks above $\mathcal P_3$ and $\mathcal P_4$ indicate that both of these paths are traversed exactly twice over the course of $\mathcal S_\mathcal W$.}
    \label{fig:fourth_traj}
\end{figure}

\noindent In particular, this result shows that $\mathcal G \neq \tilde{\mathcal G}$, so the inclusion of Theorem \ref{girth_subset} is strict. Thus, we can enhance $\tilde{\mathcal G}$ to also include all instances of $\theta_4$-graphs studied by Proposition \ref{fourth_traj}, and conclude that we must have $\tilde{\mathcal G} \subset \mathcal G$; henceforth, $\tilde{\mathcal G}$ will refer to this enhanced set of simple graphs (in particular, that described by Theorem \ref{girth_subset_intro}). Also observe that this result yields that no instances of $\tilde{\theta_6}$-graphs in Figure \ref{fig:rem_cases}(c) can be in $\mathcal G$. Specifically, consider any such graph $X$; let $\phi = (p_1=1, \dots, p_6)$ be the $6$-vector containing the lengths of the paths $\mathcal P_1, \dots, \mathcal P_6$ between the vertices of degree $6$ in increasing order, and $\eta = (n_1, \dots, n_6)$ be such that for $i \in [6]$, $n_i$ denotes the number of times $\mathcal P_i$ is traversed by $\mathfrak n$. Assume $\mathfrak n$ traverses all six paths in a cycle in $\FS(X, \Star_n)$, so $n_i \geq 1$ for all $i \in [6]$. Observe that $\phi \cdot \eta$ represents the size of this cycle subgraph in $\FS(X, \Star_n)$, and that $n_i \geq 2$ for all $2 \leq i \leq 6$ since all vertices of $V(\Star_n)$ originally upon inner vertices of such paths must return to their original positions. Such a trajectory clearly cannot improve that which lies strictly on the paths $\mathcal P_1, \dots, \mathcal P_4$ given by Proposition \ref{fourth_traj}, which has $\eta' = (4, 4, 2, 2, 0, 0)$ (i.e. $\phi \cdot \eta > \phi \cdot \eta'$).

Therefore, we have that $\tilde{\mathcal G} \subset \mathcal G \subset \mathcal G'$, and that $\mathcal G \setminus \tilde{\mathcal G}$ consists strictly of instances of graphs in Figure \ref{fig:final_rem_cases}. We conjecture that the set of graphs $\tilde{\mathcal G}$ described above is all of $\mathcal G$.

\begin{conjecture}
$\mathcal G = \tilde{\mathcal G}$.
\end{conjecture}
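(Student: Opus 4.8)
\noindent The plan is to close the sandwich $\tilde{\mathcal G}\subset\mathcal G\subset\mathcal G'$ from below. By Proposition~\ref{remaining_cases} and the discussion following Proposition~\ref{fourth_traj}, every graph in $\mathcal G'\setminus\tilde{\mathcal G}$ is an instance of a $\theta_4$-graph (with no edge path, or with an edge path violating the inequality of Proposition~\ref{fourth_traj}), a $\theta_5$-graph, a $K_4^*$-graph, or a $K_{3,3}^*$-graph. Hence it suffices to show that none of these four families meets $\mathcal G$; by the second characterization of $\mathcal G$ this amounts to proving, for each such $X$, that $g(\FS(X,\Star_n))=g(\FS(\tilde X,\Star_m))$ for some \emph{proper} subgraph $\tilde X$ of $X$ (which, on iterating, may be taken in $\tilde{\mathcal G}$). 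Since $\FS(\tilde X,\Star_m)$ embeds in $\FS(X,\Star_n)$ by Proposition~\ref{subgraph_lemma}, the inequality $g(\FS(X,\Star_n))\le g(\FS(\tilde X,\Star_m))$ is automatic, so the real task is the matching \emph{lower bound}: exhibit a proper $\tilde X$ together with a cycle in $\FS(\tilde X,\Star_m)$ that is no longer than any cycle of $\FS(X,\Star_n)$, i.e.\ show that no trajectory of $\mathfrak n$ around $X$ can beat every proper-subgraph trajectory.

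For the $\theta_4$- and $\theta_5$-cases I would push the word calculus of Proposition~\ref{theta_bd} and Proposition~\ref{fourth_traj}: a cycle in $\FS(X,\Star_n)$ is encoded by a circular word $\mathcal S_{\mathcal W}'$ on the alphabet $\{1,\dots,k\}$ of paths between the two high-degree vertices with no two consecutive letters equal, and, up to the parity corrections noted before Theorem~\ref{barbell_proof}, its length is $\phi\cdot\eta$ with $\phi$ the vector of path lengths and $\eta=(n_1,\dots,n_k)$ the vector of traversal multiplicities. The structural inputs are: (i) a path with at least two internal vertices that is used at least three times forces its multiplicity to be $\ge 4$, since otherwise (as in Proposition~\ref{theta_bd}) the token displaced off that path cannot be returned; and (ii) if some letter $i$ occurs exactly twice, the portion of the walk between its two occurrences traverses the $\theta$-subgraph on the remaining paths and, by Lemma~\ref{star_cycle} and Corollary~\ref{quad_bd}, is already no shorter than a cycle- or $\theta$-subgraph trajectory. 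Combining (i)--(ii) reduces the problem to finitely many ``primitive'' words — for $\theta_4$ essentially $\mathcal S_{\mathcal W}=312412132142$ and its relabelings plus words supported on three letters — and a direct comparison of $\phi\cdot\eta$ with $p_2(p_2+1)$, $4(p_2+p_3+p_4)$, $6(1+p_2+p_3)$ (the cycle-, $\theta$-, and $\tilde\theta$-subgraph values) shows that once the inequality of Proposition~\ref{fourth_traj} fails one of these proper-subgraph values is attained; for $\theta_5$ every primitive word is beaten by the $\theta_4$-subgraph on its four shortest paths (via Proposition~\ref{fourth_traj}) or by a smaller sub-theta.

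The $K_4^*$- and $K_{3,3}^*$-cases require the analogous, but genuinely harder, optimization over \emph{closed walks} of $\mathfrak n$ in the reduced graphs $K_4$ and $K_{3,3}$ with edges weighted by path lengths, rather than over words in paths between a fixed pair of vertices. Here a cycle has length $\sum_e \ell(e)\,m(e)$ with $m(e)$ the traversal count of edge $e$, subject to: flow conservation at every vertex; each weighted edge with an internal vertex being traversed an even number $\ge 2$ of times; tokens on the degree-$3$ vertices possibly rotating around a $\mathcal C_3$ in $K_4$ or a $\mathcal C_4$ in $K_{3,3}$ (the correction absent from the $\theta$-case); and no sub-walk silently realizing the ``$k(k-1)$'' circulation of Lemma~\ref{star_cycle} on a sub-cycle without paying for it. I would argue that any closed walk visiting all of $V(K_4)$ (resp.\ $V(K_{3,3})$) can be ``folded'' — by deleting all traversals across one well-chosen edge, or through one degree-$3$ vertex — into a closed walk confined to a proper $\theta_4$-, $\theta$-, or barbell-subgraph of no greater weighted length, via an exchange/averaging argument on the $m(e)$ under the conservation constraints.

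The main obstacle is precisely this last folding step for $K_4^*$ and $K_{3,3}^*$. Unlike in $\theta_k$, where each non-central token is pinned to a single path and $\phi\cdot\eta$ is a transparent linear form in the multiplicities, the several degree-$3$ vertices and short sub-cycles of $K_4$ and $K_{3,3}$ open up many more exchange possibilities, and ruling out a symmetric trajectory that uses all of $K_4$ (or all of $K_{3,3}$) and still beats every proper-subgraph trajectory does not seem to follow formally from the $\theta$-case estimates. I expect it to need either a dedicated invariant (for instance, tracking for a fixed token a winding number around each face of the reduced graph) or, failing a clean argument, a finite but sizeable computer-assisted enumeration of reduced closed-walk patterns of bounded complexity, in the spirit of the $\theta_0$ check invoked in Proposition~\ref{isol_vx}. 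Once the four families are eliminated we obtain $\mathcal G\subset\tilde{\mathcal G}$, and with $\tilde{\mathcal G}\subset\mathcal G$ already established the conjecture follows.
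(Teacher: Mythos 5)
The statement you are addressing is a conjecture that the paper explicitly leaves unresolved (``We conjecture that $\mathcal G = \tilde{\mathcal G}$, and will leave this unresolved in this work''), so there is no proof in the paper to compare against. Your proposal is best read as a research plan rather than a proof, and as a plan it is sound and consistent with the paper's framework: you correctly reduce, via Proposition~\ref{remaining_cases} and the enhancement of $\tilde{\mathcal G}$ after Proposition~\ref{fourth_traj}, to eliminating the $\theta_4$-instances violating the inequality of Proposition~\ref{fourth_traj}, the $\theta_5$-graphs, and the $K_4^*$- and $K_{3,3}^*$-graphs; you correctly observe that the subgraph embedding of Proposition~\ref{subgraph_lemma} gives the upper bound for free, so that only the matching lower bound on cycle lengths is at issue; and your structural inputs (i)--(ii) are the right generalizations of the word calculus in Proposition~\ref{theta_bd}.

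However, there is a genuine gap, and you identify it yourself: none of the case analyses is actually carried out. For $\theta_4$ and $\theta_5$ the claim that the reductions (i)--(ii) leave only ``finitely many primitive words,'' and that each such word is beaten by a proper-subgraph trajectory exactly when the inequality of Proposition~\ref{fourth_traj} fails, is asserted rather than proved; the corresponding verification in Proposition~\ref{theta_bd} for the three-path case already required a delicate tracking of displaced tokens ($\sigma_0(w)$, $\sigma_0(w_2)$, etc.), and the four- and five-path analogues multiply the combinatorics considerably. For $K_4^*$ and $K_{3,3}^*$ the ``folding'' of an arbitrary closed walk into one confined to a proper subgraph of no greater weighted length is precisely the missing idea: the exchange/averaging argument under flow conservation is not formulated, and you concede it may require a new invariant or a computer search. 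A proof cannot rest on an acknowledged obstacle, so the conjecture remains open as far as this proposal goes; what you have is a plausible and well-organized attack, not a resolution.
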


\begin{figure}[ht]
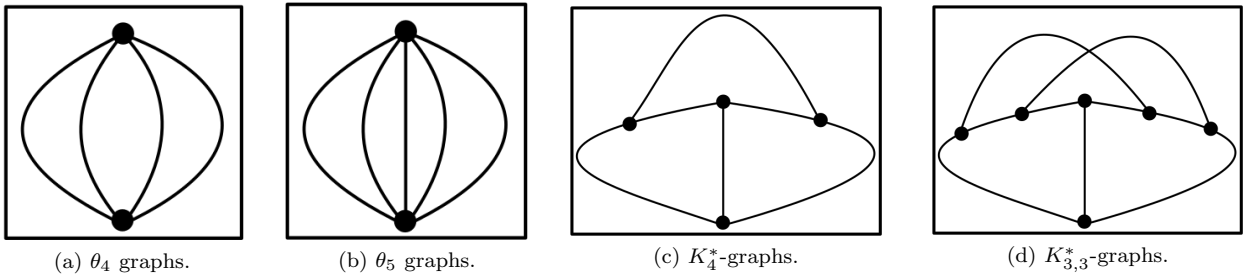

    \centering
    \begin{minipage}{.195\linewidth}
    \centering
    \subfloat[$\theta_4$ graphs.]{\label{fig:theta4_final}\includegraphics[width=\textwidth]{figures/rem_cases_a.png}}
    \end{minipage}%
    \hfill
    \begin{minipage}{.195\linewidth}
    \centering
    \subfloat[$\theta_5$ graphs.]{\label{fig:theta5_final}\includegraphics[width=\textwidth]{figures/rem_cases_b.png}}
    \end{minipage}%
    \hfill
    \begin{minipage}{.25\linewidth}
    \centering
    \subfloat[$K_4^*$-graphs.]{\label{fig:K4*_final}\includegraphics[width=\textwidth]{figures/rem_cases_d.png}}
    \end{minipage}
    \hfill
    \begin{minipage}{.25\linewidth}
    \centering
    \subfloat[$K_{3,3}^*$-graphs.]{\label{fig:K33*_final}\includegraphics[width=\textwidth]{figures/rem_cases_e.png}}
    \end{minipage}%
    \caption{The remaining possibilities for graphs in the set $\mathcal G$ that remain unresolved by the present work.}
    \label{fig:final_rem_cases}
\end{figure}

\section{Acknowledgements}

This research was conducted at the University of Minnesota Duluth REU and was supported, in part, by NSF-DMS grant 1949884 and NSA Grant H98230-20-1-0009. We would like to thank Professor Joseph Gallian for organizing the Duluth REU, and are deeply grateful to Colin Defant and Noah Kravitz (authors of the papers that introduced friends-and-strangers graphs) for many helpful conversations. We in particular thank Colin Defant for suggesting induction on the number of ears of a biconnected graph as a possible approach toward proving Theorem \ref{conjec_7.1}, introducing us to Inkscape as a tool for making figures, and for many productive comments on a draft of the manuscript, and Noah Kravitz for several fruitful discussions on both of the problems studied in this work. We would also like to sincerely thank David Rolnick for helpful conversations and for providing access to faster compute.

\section{References}

\printbibliography[heading=none]
\end{document}